\definecolor{shadecolor}{rgb}{0.88,0.91,0.95}       
\newcommand{\Z}{\mathbb{Z}}
\newcommand{\N}{\mathbb{N}}
\newcommand{\R}{\mathbb{R}}
\newcommand{\C}{\mathbb{C}}
\newcommand{\Ker}{\operatorname{Ker}}
\newcommand{\spann}{\operatorname{span}}
\renewcommand{\Im}{\operatorname{Im}}
\newcommand{\Hol}{\operatorname{Hol}}
\newcommand{\Rm}{\operatorname{Rm}}
\newcommand{\GL}{\operatorname{GL}}
\newcommand{\SO}{\operatorname{SO}}
\newcommand{\so}{\mathfrak{so}}
\newcommand{\Sp}{\operatorname{Sp}}
\newcommand{\Spin}{\operatorname{Spin}}
\newcommand{\CP}{\mathbb{CP}}
\newcommand{\Spec}{\operatorname{Spec}}
\newcommand{\Id}{\operatorname{Id}}
\renewcommand{\|}[1]{\left| \left| #1 \right| \right|}
\newcommand{\<}{\langle}
\renewcommand{\>}{\rangle}
\newcommand{\proj}{\operatorname{proj}}
\newcommand{\vol}{\operatorname{vol}}
\newcommand{\inj}{\operatorname{inj}}
\newcommand{\supp}{\operatorname{supp}}
\newcommand{\ind}{\operatorname{ind}}
\renewcommand{\d}{\operatorname{d} \!}
\renewcommand{\tilde}[1]{\widetilde{#1}}
\newcommand{\XEH}{X_{\text{EH}}}
\newcommand\Item[1][]{%
  \ifx\relax#1\relax  \item \else \item[#1] \fi
  \abovedisplayskip=0pt\abovedisplayshortskip=0pt~\vspace*{-\baselineskip}}
\author{Daniel Platt}
\date{\today}
\title{Existence of torsion-free $G_2$-structures on resolutions of $G_2$-orbifolds using weighted Hölder norms}
\numberwithin{equation}{section}
\newtheorem{proposition}[equation]{Proposition}
\crefname{proposition}{Proposition}{Propositions}
\newtheorem{lemma}[equation]{Lemma}
\crefname{lemma}{Lemma}{Lemmas}
\newtheorem{corollary}[equation]{Corollary}
\crefname{corollary}{Corollary}{Corollaries}
\newtheorem*{corollary*}{Corollary}
\crefname{corollary*}{Corollary}{Corollaries}
\newtheorem{theorem}[equation]{Theorem}
\crefname{theorem}{Theorem}{Theorems}
\newtheorem*{theorem*}{Theorem}
\crefname{theorem*}{Theorem}{Theorems}
\crefname{claim}{Claim}{Claims}
\theoremstyle{remark}
\crefname{question}{Question}{Questions}
\newtheorem{definition}[equation]{Definition}
\crefname{definition}{Definition}{Definitions}
\crefname{example}{Example}{Examples}
\newtheorem{remark}[equation]{Remark}
\crefname{remark}{Remark}{Remarks}
\crefname{assumption}{Assumption}{Assumptions}
\begin{document}

\maketitle

\begin{abstract}
 The resolution of the $G_2$-orbifold $T^7/\Gamma$, where $\Gamma$ is a suitably chosen finite group, admits a $1$-parameter family of $G_2$-structures with small torsion $\varphi^t$, obtained by gluing in Eguchi-Hanson spaces.
It was shown in \cite{Joyce1996} that $\varphi^t$ can be perturbed to torsion-free $G_2$-structures $\tilde{\varphi}^t$ for small values of $t$.
Using norms adapted to the geometry of the manifold we give an alternative proof of the existence of $\tilde{\varphi}^t$.
This alternative proof produces the estimate $\|{ \tilde{\varphi}^t-\varphi^t}_{C^0} \leq ct^{5/2}$.
This is an improvement over the previously known estimate $\|{ \tilde{\varphi}^t-\varphi^t}_{C^0} \leq ct^{1/2}$.
As part of the proof, we show that Eguchi-Hanson space admits a unique (up to scaling) harmonic form with decay, which is a result of independent interest.
\end{abstract}

\tableofcontents

\setlength{\parskip}{0.3cm}

\pagebreak

\section{Introduction}

The first compact examples of Riemannian manifolds with holonomy equal to $G_2$ were constructed in \cite{Joyce1996} by resolving an orbifold of the form $T^7/\Gamma$, where $\Gamma$ is a finite group of isometries of $T^7$.
This was done by constructing $G_2$-structures with small torsion, and subsequently perturbing them to torsion-free $G_2$-structures.
This perturbation made use of a general existence result for torsion-free $G_2$-structures that holds on all $7$-manifolds.
An immediate question is:
how far away is the torsion-free $G_2$-structure from the $G_2$-structure with small torsion?
This is important in applications, such as the construction of associative submanifolds and $G_2$-instantons, see \cite{Dwivedi2023,Platt2024}.

The construction from \cite{Joyce1996} is a generalisation of the \emph{Kummer construction} for K3 surfaces from \cite{Topiwala1987}.
In the K3 case, a Kähler metric is constructed explicitly, and it is shown that a nearby Calabi-Yau metric exists.
The question how far the explicit Kähler metric is from the Calabi-Yau metric has been studied extensively in \cite{Kobayashi1990,Donaldson2010,Jiang2025}.
Such estimates were then used in \cite{Lye2023} and \cite[Section 2.4]{Oliveira2023}.

In this article, we give a partial answer to the question how far away the torsion-free $G_2$-structure is from the $G_2$-structure with small torsion.
Because the construction from \cite{Joyce1996} is a generalisation of the original Kummer construction, our estimates for the seven-dimensional manifold can be adapted to imply estimates for the original Kummer construction.
We consider the case of $\Gamma=\Z^3$, where all orbifold singularities are resolved by gluing in Eguchi-Hanson spaces, and we denote its resolution by $N_t$.
Here, $0<t \ll 1$ is the gluing parameter that controls how big the glued in Eguchi-Hanson spaces are.
We prove an improved estimate for the difference between the torsion-free $G_2$-structure and the one with small torsion.
The main result is \cref{corollary:kummer-construction-simplified-torsion-free-estimate}:

\begin{theorem*}
 Choose $\alpha \in (0,1)$ and $\beta \in (-1,0)$ both close to $0$.
 Let $N_t$ be the resolution of $T^7/\Gamma$ from \cref{equation:resolution-of-t7-gamma} and $\varphi^t \in \Omega^3(N_t)$ the $G_2$-structure with small torsion from \cref{equation:g2-structure-on-kummer-construction}.
 There exists $c>0$ independent of $t$ such that the following is true:
 for $t$ small enough, there exists $\eta^t \in \Omega^2(N_t)$ such that $\tilde{\varphi}=\varphi^t+\d \eta^t$ is a torsion-free $G_2$-structure, and $\eta^t$ satisfies
 \[
  \|{
   \eta^t
  }_{C^{2,\alpha/2}_{\beta;t}}
  \leq
  ct^{7/2-\beta}.
 \]
 In particular,
 \[
 \|{\tilde{\varphi}-\varphi^t}_{L^\infty} \leq ct^{5/2} \text{ and }
 \|{\tilde{\varphi}-\varphi^t}_{C^{0,\alpha/2}} \leq ct^{5/2-\alpha/2} \text{ as well as }
 \|{\tilde{\varphi}-\varphi^t}_{C^{1,\alpha/2}} \leq ct^{3/2-\alpha/2}.
 \]
\end{theorem*}

Here, the norm 
$\|{
   \; \cdot \;
  }_{C^{2,\alpha/2}_{\beta;t}}$
is a weighted Hölder norm.
The norms in the last line of the theorem are ordinary, unweighted norms.
The group $\Gamma$ is a finite group acting through $G_2$-involutions on $T^7$.
In \cite{Joyce1996,Joyce2000} the estimate
 $\|{
   \tilde{\varphi}-\varphi
  }_{L^\infty}
  \leq
  ct^{1/2}$
 was shown.
 In this sense, the estimates from \cref{corollary:kummer-construction-simplified-torsion-free-estimate} are an improvement.
 The theorem hinges on an estimate for the inverse of the Laplacian acting on $2$-forms on the resolution of $T^7/\Gamma$.
 The crucial idea necessary for obtaining this estimate is to split $2$-forms into a part that is harmonic on the $4$-dimensional fibres orthogonal to the singular set of $T^7/\Gamma$, and a rest.
 The $4$-dimensional fibres are subsets of Eguchi-Hanson space $\XEH$, and the proof of \cref{corollary:kummer-construction-simplified-torsion-free-estimate} uses detailed knowledge of the harmonic forms on $\XEH$.
 The space $\XEH$ admits a harmonic $2$-form $\nu$ that can be written down explicitly and comes from rescaling the metric.
 In \cref{corollary:kernel-for-eguchi-laplacian}, we denote the Laplacian on $\XEH$ acting on $p$-forms by $\Delta_{p,g_{(1)}}$, and we prove that $\nu$ is essentially the only form with decay:
 
 \begin{theorem*}
  For $\lambda \in (-4,0)$, the $L^2_{2,\lambda}$-kernels of $\Delta_{p,g_{(1)}}$ acting on $p$-forms of different degrees are the same as the $L^2$-kernels, namely:
  \begin{align*}
   \Ker (\Delta_{g_{(1)}}:L^2_{2,\lambda}(\Lambda^2(\XEH))\rightarrow L^2_{0,\lambda-2}(\Lambda^2(\XEH)) )
   & = \< \nu \>,
   \\
   \Ker (\Delta_{g_{(1)}}:L^2_{2,\lambda}(\Lambda^p(\XEH))\rightarrow L^2_{0,\lambda-2}(\Lambda^p(\XEH)) )
   & = 0
   \text{ for }
   p \neq 2.
  \end{align*}
\end{theorem*}

Here $L^2_{2,\lambda}(\Lambda^p(\XEH))$ denote the usual weighted Sobolev spaces on asymptotically conical manifolds.
They consist of, roughly speaking, $L^2$-sections with $2$ weak derivatives that decay like $r^\lambda$ as $r \rightarrow \infty$, where $r$ is a radius function.

\textbf{Acknowledgments.}
This article was written during my PhD studies under the supervision of Jason Lotay and Simon Donaldson.
I am indebted to them for sharing their ideas and supporting me during my work on this thesis.
I thank the two anonymous reviewers for their very helpful comments.
Their comments included the proof to \cref{proposition:critical-dimension}.
This work was supported by [EP/L015234/1], the Engineering and Physical Sciences Research
Council, the EPSRC Centre for Doctoral Training in Geometry and Number Theory (the London School of Geometry and Number Theory), University College London. 
The author was also supported by Imperial College London.

\section{Background}

\subsection{Definition of the Eguchi-Hanson Space}

The singularities of the $G_2$-orbifolds we are interested in are locally modelled on $\R^3 \times \C^2/\{\pm 1\}$.
In order to resolve these singularities, we study the resolution of the point singularity of $\C^2/\{\pm 1\}$, called the Eguchi-Hanson space.
Some references for this space are \cite[Section 7.2]{Joyce2000} and \cite[Section 1]{Dancer1999}.
We begin by defining the Eguchi-Hanson space and the Eguchi-Hanson metrics, which are a $1$-dimensional family of Hyperkähler metrics, controlled by a parameter $k \in \R_{\geq 0}$.
For $k>0$ we get a metric on a smooth $4$-manifold (this is point one of the following proposition), and for $k=0$ we get the standard metric on $\mathbb{H}/\{ \pm 1\}$ or equivalently $\C^2/\{ \pm 1\}$ (this is point two of the following proposition).
The meaning of $k$ is the \emph{scale} of the Eguchi-Hanson space:
namely, for $k>0$ the space contains a minimal $2$-sphere whose diameter is proportional to $k^{1/4}$.
For $k=0$ one can think of the sphere having collapsed to size $0$.

\begin{proposition}
\label{lemma:the-eguchi-hanson-metric}
 Let $r$ be a coordinate on the $\R_{\geq 0}$-factor of $\R_{\geq 0} \times \SO(3)$.
 Let
 \begin{align*}
  \eta^1=
  2
  \begin{pmatrix}
   0&0&0\\
   0&0&1\\
   0&-1&0
  \end{pmatrix},
  \eta^2=
  2
  \begin{pmatrix}
   0&0&-1\\
   0&0&0\\
   1&0&0
  \end{pmatrix},
  \eta^3=
  2
  \begin{pmatrix}
   0&-1&0\\
   1&0&0\\
   0&0&0
  \end{pmatrix}
  \in \so(3)
 \end{align*}
 and denote the dual basis extended to left-invariant $1$-forms on $\SO(3)$ by the same symbols.
 For $k \geq 0$, let $f_k: \R_{>0} \times \SO(3) \rightarrow \R_{>0}$ be defined by $f_k(r)=(k+r^2)^{1/4}$ and set
 \begin{align*}
  \d t&=f_k^{-1}(r) \d r,
  &
  e^1(r)&=r f_k^{-1}(r) \eta^1,
  &
  e^2(r)&=f_k(r) \eta^2,
  &
  e^3(r)&=f_k(r) \eta^3.
 \end{align*}
 Define $\omega_1^{(k)}, \omega_2^{(k)}, \omega_3^{(k)} \in \Omega^2(\R_{>0} \times \SO(3))$ to be
 \begin{align}
  \label{equation:hyperkaehler-triple-on-EH}
  \omega_1^{(k)}&=\d t \wedge e^1+e^2 \wedge e^3, &
  \omega_2^{(k)}&=\d t \wedge e^2+e^3 \wedge e^1, &
  \omega_3^{(k)}&=\d t \wedge e^3+e^1 \wedge e^2,
 \end{align}
 and denote by $g_{(k)}$ the metric on $\R_{>0} \times \SO(3)$ that makes $(\d t, e^1, e^2, e^3)$ an orthonormal basis.
 \begin{enumerate}
 \item
 If $k>0$, consider the copy of $\SO(2)$ in $\SO(3)$ defined by $\{ \exp(s \cdot \eta^1): s \in \R \}$, defining a right action of $\SO(2)$ on $\SO(3)$.
 Denote by $V \simeq \R^2$ the standard representation of $\SO(2)$.
 Define $\Psi: \SO(3) \times \R_{>0} \rightarrow \SO(3) \times V$ as $\Psi(g,r)=(g,(r,0))$.
 Denote
 \begin{align*}
  \XEH
  =
  \SO(3)
  \times_{\SO(2)}
  V.
 \end{align*}
 Then $\Psi$ induces a smooth injective map $\hat{\Psi}:\SO(3) \times \R_{>0} \rightarrow \XEH$ that is a diffeomorphism onto its image, and the forms $\hat{\Psi}_*(\omega_i^{(k)})$ can be extended to smooth 2-forms on all of $\XEH$.
 Furthermore, $\hat{\Psi}_*(g_{(k)})$ can also be extended to a metric on all of $\XEH$, and $(\XEH, \hat{\Psi}_*(g_{(k)}))$ is a Hyperkähler manifold.
 
 \item
 If $k=0$:
 parametrise the quaternions as $x_0+x_1i+x_2j+x_3k$ with $x_0,x_1,x_2,x_3 \in \R$, embed $S^3 \subset \mathbb{H}$ as the unit sphere, and fix the identification $\phi:S^3/\{\pm 1\} \rightarrow \SO(3)$ that maps $x$ onto the map $y \mapsto x \cdot y \cdot x^{-1}$, where we use $S^3 /\{\pm 1\} \subset \mathbb{H}/\{\pm 1\}$ and $\cdot$ denotes quaternionic multiplication, for $x \in S^3/\{\pm 1\} \subset \mathbb{H}/\{\pm 1\}$.
 Denote 
 \begin{align*}
  \Phi:
  \SO(3) \times \R_{>0} &\rightarrow \mathbb{H}/\{\pm 1\}
  \\
  (x,r) & \mapsto \sqrt{r} \cdot \phi^{-1}(x).
 \end{align*}
 Then $\Phi^* {\omega}_i = \omega_i^{(0)}$ for $i \in \{1,2,3\}$ and $\Phi^* g = g_{(0)}$, where $g, \omega_1,\omega_2,\omega_3 \in \Omega^2(\mathbb{H})$ are the metric and standard Hyperkähler triple on $\mathbb{H}$.
 \end{enumerate}
\end{proposition}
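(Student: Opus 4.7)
The plan is to handle the two parts of the proposition separately; the smooth extension across the zero section in part (1) is the main technical point, while part (2) is essentially a direct computation.

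For part (1), I would first identify $\hat\Psi$ as a diffeomorphism from $\SO(3) \times \R_{>0}$ onto the complement of the zero section $S := \SO(3)/\SO(2) \cong S^2$ in $X$: for $r > 0$ the vector $(r,0) \in V$ has trivial $\SO(2)$-stabiliser, so the $\SO(2)$-orbit of $(g,(r,0))$ in $\SO(3)\times V$ is a principal torsor, and varying $(g,r)$ sweeps out $\SO(3)\times_{\SO(2)}(V\setminus\{0\})$. To extend the forms and the metric across $S$, I would introduce local coordinates in a tubular neighbourhood of $S$: locally $X \cong U \times V$ for $U \subset S$ open, with $V$-type Cartesian coordinates $v = (v_1, v_2)$. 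In these coordinates, $r^2 = |v|^2$ is smooth across $r=0$, and for $k>0$ the function $f_k^2 = \sqrt{k+r^2}$ extends smoothly to $r=0$. The form $\eta^1$ generates the $\SO(2)$-action on $V$ and is therefore an angular $1$-form in $V$, so $r\,\eta^1$ becomes manifestly smooth in $(v_1, v_2)$. The same analysis shows $e^2 = f_k \eta^2$, $e^3 = f_k \eta^3$, and $\d t = f_k^{-1} \d r$ extend to smooth forms, yielding smooth extensions of $\omega_i^{(k)}$ and $g_{(k)}$.

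For the Hyperkähler property I would verify three things: (i) each $\omega_i^{(k)}$ is closed, (ii) $\omega_i^{(k)} \wedge \omega_j^{(k)} = 0$ for $i \neq j$ and $(\omega_i^{(k)})^2 = (\omega_j^{(k)})^2$ for all $i, j$, and (iii) non-degeneracy. Closedness follows by writing $\d t \wedge e^1 = \d(f_k^2) \wedge \eta^1$ and applying the Maurer-Cartan structure equations for $\eta^i$ on $\SO(3)$; the specific choice $f_k = (k + r^2)^{1/4}$ is exactly what makes the resulting terms cancel. Conditions (ii) and (iii) are immediate because $(\d t, e^1, e^2, e^3)$ is a $g_{(k)}$-orthonormal coframe in which each $\omega_i^{(k)}$ has the same expression as the flat form $\omega_i$ on $\mathbb{H}$. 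A closed, compatible, non-degenerate triple of $2$-forms on a $4$-manifold determines a Hyperkähler structure, completing part (1).

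For part (2), specialising to $k=0$ gives $f_0(r) = r^{1/2}$ and $\d t = r^{-1/2}\,\d r$, so that $t$ serves as the norm in $\mathbb{H}/\{\pm 1\}$ under $\Phi$, while $\phi^{-1}(x) \in S^3/\{\pm 1\}$ supplies the unit direction. I would compute $\Phi^*\omega_i$ by decomposing $\omega_i$ on $\mathbb{H}\setminus\{0\}$ into radial-times-angular and purely-angular components, using that under the double cover $S^3/\{\pm 1\} \to \SO(3)$ the standard left-invariant $1$-forms on $S^3$ pull back to (multiples of) the $\eta^i$. The resulting expression matches $\omega_i^{(0)}$ term by term, and the metric computation is analogous. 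The principal obstacle throughout is the smooth extension in part (1): the ``polar''-type coordinates $(g,r)$ are singular along $r=0$, whereas $\omega_i^{(k)}$ and $g_{(k)}$ are in fact smooth there, so one must execute a small but careful change of coordinates to make smoothness manifest. The Hyperkähler checks and the flat case (2) are, by comparison, straightforward once the setup is fixed.
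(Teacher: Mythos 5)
Your overall strategy matches the paper's: establish a smooth extension across the exceptional $S^2$, verify closedness of $\omega_i^{(k)}$ by computation, observe that the pointwise compatibility is automatic since the triple takes the Euclidean form in the $g_{(k)}$-orthonormal coframe $(\d t, e^1, e^2, e^3)$, and then invoke the fact that a closed definite triple of $2$-forms on a $4$-manifold is automatically parallel, hence Hyperk\"ahler (this is precisely \cite[Lemma 6.8]{Hitchin1987}, which the paper cites and you state without attribution). The paper simply cites \cite[Section 2.4]{Lotay2017} for the smooth extension; you attempt to carry it out, and that is where the flaw lies.

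Your extension argument does not go through as stated. You claim that because $\eta^1$ ``generates the $\SO(2)$-action on $V$'', the form $r\,\eta^1$ (and hence $e^1 = rf_k^{-1}\eta^1$) is ``manifestly smooth'' in the fibre coordinates $(v_1, v_2)$. But in a local trivialisation, the angular form on $V$ is $\d\theta = (-v_2\,\d v_1 + v_1\,\d v_2)/r^2$, so $r\,\d\theta = (-v_2\,\d v_1 + v_1\,\d v_2)/r$ has bounded but \emph{discontinuous} coefficients at $r=0$ — it does not extend even continuously. You then assert that $\d t = f_k^{-1}\,\d r$, $e^2$, and $e^3$ extend smoothly as well; none of them do, since $\d r$ already fails to extend across the zero section and $\eta^2, \eta^3$ involve the $\SO(3)$-gauge, which is singular there. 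The correct statement is that \emph{only} the combined tensors $g_{(k)}$ and $\omega_i^{(k)}$ extend, and showing this requires regrouping: e.g.\ $\d t \wedge e^1 = \tfrac{1}{2}f_k^{-2}\,\d(r^2) \wedge \eta^1$, where $\d(r^2)\wedge\eta^1 = 2r\,\d r\wedge\d\theta = 2\,\d v_1 \wedge \d v_2$ is the smooth fibre area form and $f_k^{-2} = (k+r^2)^{-1/2}$ is smooth for $k > 0$; similar but more delicate manipulations (tracking the connection defining the horizontal distribution) handle $e^2 \wedge e^3$ and $\omega_2^{(k)}, \omega_3^{(k)}$. Since you yourself identify this extension as ``the principal obstacle'' in the proof, the argument needs this repair before it can stand; the closedness computation, the Hitchin citation, and the $k=0$ case are otherwise consistent with what the paper does.
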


The space $\XEH$ is the total space of a vector bundle over $S^2=\SO(3)/\SO(2)$.
The image of $\hat{\Psi}$ is the complement of the zero section of this vector bundle.
By slight abuse of notation, we will denote the extensions of $\omega_i^{(k)}$ for $i \in \{1,2,3\}$ and $g_{(k)}$ to $\XEH$ in the case $k>0$ by the same symbol, suppressing the pushforward under $\hat{\Psi}$.

\begin{proof}
 For $k>0$:
 the fact that $\omega_1^{(k)},\omega_2^{(k)},\omega_3^{(k)},g_{(k)}$ can be extended to all of $\XEH$ was proven, for example, in \cite[Section 2.4]{Lotay2017}.
 One checks using a direct computation that $\omega_i^{(k)}$ for $i \in \{1,2,3\}$ is closed and \cite[Lemma 6.8]{Hitchin1987} implies that $\omega_i^{(k)}$ is also parallel for $i \in \{1,2,3\}$.
 Both the symplectic forms and the metric are defined using the same orthonormal basis, which proves that they are compatible.
 The case $k=0$ is a direct calculation.
\end{proof}

\begin{remark}
A possible point of confusion is that the function $r:\XEH \rightarrow \R$ is (for large $r$) approximately the squared distance to the bolt $\SO(3) \times_{SO(2)} \{0\}$ of $\XEH$, so it is not a radius function.
\end{remark}

The Hyperkähler structure on $\XEH$ also has the important property that it approximates the flat Hyperkähler structure on $\mathbb{H}$ for large values of $r$.
The following definition makes this notion precise, and \cref{lemma:eguchi-hanson-ALE-condition} states that the Hyperkähler structure on $\XEH$ does indeed have this property.

\begin{definition}[Definition 7.2.1 in \cite{Joyce2000}]
\label{definition:ale}
 Let $G$ be a finite subgroup of $\Sp(1)$, and let 
 $(\hat{\omega}_1,\hat{\omega}_2,\hat{\omega}_3,\hat{g})$
 be the Euclidean Hyperkähler structure on $\mathbb{H}$, and 
 $\sigma: \mathbb{H}/G \rightarrow [0,\infty)$ 
 the radius function on $\mathbb{H}/G$.
 We say that a Hyperkähler $4$-manifold $(X,\omega_1,\omega_2,\omega_3,g)$ is \emph{asymptotically locally Euclidean (ALE) asymptotic to $\mathbb{H}/G$}, if there exists a compact subset $S \subset X$ and a map $\pi:X \setminus S \rightarrow \mathbb{H}/G$ that is a diffeomorphism between $X \setminus S$ and $\{x \in \mathbb{H}/G : \sigma(x)>R \}$ for some $R>0$, such that
 \begin{align}
  \hat{\nabla}^k(\pi_*(g)-\hat{g})=\mathcal{O}(\sigma^{-4-k})
  \text{ and }
  \hat{\nabla}^k(\pi_*(\omega_i)-\hat{\omega}_i)=\mathcal{O}(\sigma^{-4-k})
 \end{align}
 as $\sigma \rightarrow \infty$, for $i \in \{1,2,3\}$ and $k \geq 0$, where $\hat{\nabla}$ is the Levi-Civita connection of $\hat{g}$.
\end{definition}

\begin{proposition}[Example 7.2.2 in \cite{Joyce2000}]
\label{lemma:eguchi-hanson-ALE-condition}
  The $1$-form $\tau_1^{(k)} \in \Omega^1(\XEH \setminus \SO(3) \times_{\SO(2)} \{0 \})$ given by $\tau_1^{(k)}=(f_k^2-f_0^2) \eta^1$ satisfies $\omega_1^{(k)}-\omega_1^{(0)} = \d \tau_1^{(k)}$ and for any $l \in \Z$
  \begin{align}
  \label{equation:ALE-hyperkaehler-decay}
   \left|
   \nabla^l
   \tau_1^{(1)}
   \right|_{g_{(0)}}
   &=
   \mathcal{O}(r^{-3-l}),
  \end{align}
  where $\nabla$ denotes the Levi-Civita connection of $g_{(0)}$.
  Furthermore, $\omega_2^{(k)}-\omega_2^{(0)}=0$, and $\omega_3^{(k)}-\omega_3^{(0)}=0$.
  In particular, $(\XEH, \omega_1^{(k)}, \omega_2^{(k)}, \omega_3^{(k)}, g_{(k)})$ is ALE asymptotic to $\mathbb{H}/\{\pm 1\}$.
\end{proposition}

\begin{remark}
\label{lemma:eguchi-hanson-blowup-map}
\label{equation:blowup-map-definition}
By definition, $\XEH$ is an associated bundle over $\SO(3)/\SO(2)=S^2$.
In fact, $\XEH$ is diffeomorphic to the total space of $T^* S^2$, which itself is diffeomorphic to $T^* \CP^1$.
It is a folklore result that $(\XEH, J^{(k)}_1)$ is biholomorphic to $T^* \CP^1$ for all $k>0$, which in turn is the blowup of $\C^2/\{\pm 1\}$ in the origin, see e.g. \cite[p. 17]{Dancer1999} for the statement.
Explicitly, the blowup map $\rho: \XEH \rightarrow \mathbb{H}/\{\pm 1\}$ is given by setting $\rho=\Phi \circ \hat{\Psi}$ on the complement of the zero section in $\XEH$ and mapping the zero section to $[0] \in \mathbb{H}/\{ \pm 1\}$.
This map satisfies $r=|\rho|^2$.
\end{remark}

\subsection{$G_2$-structures}
\label{section:g2-structures}

In this section we collect standard facts about $G_2$-geometry needed later.

\begin{definition}[Definition 10.1.1 in \cite{Joyce2000}]
\label{definition:g2}
 Let $(x_1,\dots,x_7)$ be coordinates on $\R^7$.
 Write $\d x_{ij \dots l}$ for the exterior form $\d x_i \wedge \d x_j \wedge \dots \wedge \d x_l$.
 Define $\varphi_0 \in \Omega^3(\R^7)$ by
 \begin{align}
 \label{equation:standard-g2-form}
  \varphi_0
  =
  \d x_{123}+\d x_{145}+\d x_{167}+\d x_{246}-\d x_{257}-\d x_{347}-\d x_{356}.
 \end{align}
 The subgroup of $\GL(7,\R)$ preserving $\varphi_0$ is the exceptional Lie group $G_2$.
 It also fixes the Euclidean metric $g_0=\d x_1^2+\dots + \d x_7^2$, the orientation on $\R^7$, and $* \varphi_0 \in \Omega^4(\R^7)$.
\end{definition}

On $\mathbb{H}$ with coordinates $(y_0,y_1,y_2,y_3)$ we have the three symplectic forms $\omega_1,\omega_2,\omega_3$ given as
\begin{align*}
 \omega_1&=
 \d y_0 \wedge \d y_1 + \d y_2 \wedge \d y_3,
 &
 \omega_2&=
 \d y_0 \wedge \d y_2 - \d y_1 \wedge \d y_3,
 &
 \omega_3&=
 \d y_0 \wedge \d y_3 + \d y_1 \wedge \d y_2.
\end{align*}
Identify $\R^7$ with coordinates $(x_1,\dots,x_7)$ with $\R^3 \oplus \mathbb{H}$ with coordinates $((x_1,x_2,x_3),(y_1,y_2,y_3,y_4))$.
Then we have for $\varphi_0, * \varphi_0$ from \cref{definition:g2}:
\begin{align}
\label{equation:product-g2-structure}
 \varphi_0
 &=
 \d x_{123} - 
 \sum_{i=1}^3
 \d x_i \wedge \omega_i
 ,&
 *\varphi_0
 &=
 \vol_{\mathbb{H}}-
 \sum_
 {\substack{(i,j,k) = (1,2,3)\\
 \text{and cyclic permutation}}}
 \omega_i \wedge \d x_{jk}.
\end{align}

This linear algebra statement easily extends to product manifolds in the following sense:
if $X$ is a Hyperkähler $4$-manifold, and $\R^3$ is endowed with the Euclidean metric, then $\R^3 \times X$ has a $G_2$-structure.
The $G_2$-structure is given by the same formula as in the flat case, namely \cref{equation:product-g2-structure}, after replacing $(\omega_1,\omega_2,\omega_3)$ with the triple of parallel symplectic forms defining the Hyperkähler structure on $X$.

\begin{definition}
 Let $M$ be an oriented $7$-manifold.
 A principal subbundle $Q$ of the bundle of oriented frames with structure group $G_2$ is called a \emph{$G_2$-structure}.
 Viewing $Q$ as a set of linear maps from tangent spaces of $M$ to $\R^7$, there exists a unique $\varphi \in \Omega^3(M)$ such that $Q$ identifies $\varphi$ with $\varphi_0 \in \Omega^3(\R^7)$ at every point.
 
 Such $G_2$-structures are in $1$-$1$ correspondence with $3$-forms on $M$ for which there exists an oriented isomorphism mapping them to $\varphi_0$ at every point.
 We will therefore also refer to such $3$-forms as $G_2$-structures.
\end{definition}

Let $M$ be a manifold with $G_2$-structure $\varphi$.
We call $\nabla \varphi$ the \emph{torsion} of a $G_2$-structure $\varphi \in \Omega^3(M)$.
Here, $\nabla$ denotes the Levi-Civita induced by $\varphi$ in the following sense:
we have $G_2 \subset \SO(7)$, so $\varphi$ defines a Riemannian metric $g$ on $M$, which in turn defines a Levi-Civita connection.
To emphasise non-linearity, we also use the following notation:
write $\Theta(\varphi)=*\varphi$, where ``$*$'' denotes the Hodge star defined by $g$.
Using this, the following theorem gives a characterisation of torsion-free $G_2$-manifolds:

\begin{theorem}[Propositions 10.1.3 and 10.1.5 in \cite{Joyce2000}]
\label{theorem:torsion-free-g2-structures-characterisation}
 Let $M$ be an oriented $7$-manifold with $G_2$-structure $\varphi$ with induced metric $g$.
 The following are equivalent:
 \begin{enumerate}[label=(\roman*)]
  \item 
  $\Hol(g) \subseteq G_2$,
  
  \item
  $\nabla \varphi=0$ on $M$, where $\nabla$ is the Levi-Civita connection of $g$, and
  
  \item
  $\d \varphi =0$ and $\d \Theta(\varphi)=0$ on $M$.
 \end{enumerate}
 If these hold then $g$ is Ricci-flat.
\end{theorem}

Later on, we will investigate perturbations of $G_2$-structures and analyse how they affect the torsion.
To this end, we will use the following estimates for the map $\Theta$ defined before:

\begin{proposition}[Proposition 10.3.5 in \cite{Joyce2000} and eqn. (21) of part I in \cite{Joyce1996}]
\label{proposition:Theta-estimates}
 There exists $\epsilon > 0$ and $c>0$ such that whenever $M$ is a $7$-manifold with $G_2$-structure $\varphi$ satisfying $\d \varphi=0$, then the following is true.
 Suppose $\chi \in C^\infty(\Lambda^3 T^*M)$ and $| \chi | \leq \epsilon$.
 Then $\varphi+\chi$ is a $G_2$-structure, and
 \begin{align}
  \label{equation:theta-expansion}
  \Theta(\varphi+\chi)
  =*\varphi-T(\chi)-F(\chi),
 \end{align}
 where ``$*$'' denotes the Hodge star with respect to the metric induced by $\varphi$, $T: \Omega^3(M) \rightarrow \Omega^4(M)$ is a linear map (depending on $\varphi$), and $F$ is a smooth function from the closed ball of radius $\epsilon$ in $\Lambda^3T^*M$ to $\Lambda^4 T^*M$ with $F(0)=0$.
 Furthermore,
  \begin{align*}
   \left|
    F(\chi)
   \right|
   &\leq
   c
   \left| \chi \right|^2,
   \\
   \left|
    \d \,(F(\chi))
   \right|
   &\leq
   c
   \left\{
    \left| \chi \right|^2 \left| \d^* \varphi \right|
    +
    \left| \nabla \chi \right| \left| \chi \right|
   \right\},
   \\
   [\d \, (F(\chi))]_\alpha
   &\leq
   c
   \left\{
    [\chi]_\alpha \|{ \chi }_{L^\infty} \|{ \d^* \varphi }_{L^\infty}
    +
    \|{ \chi }_{L^\infty}^2 [ \d ^* \varphi ]_\alpha
    +
    [\nabla \chi ]_\alpha \|{ \chi }_{L^\infty}
    +
    \|{ \nabla \chi }_{L^\infty} [\chi]_\alpha
   \right\},
  \end{align*}
  as well as
  \begin{align*}
   \left|
    \nabla (F(\chi))
   \right|
   &\leq
   c
   \left\{
    \left| \chi \right|^2 \left| \nabla \varphi \right|
    +
    \left| \nabla \chi \right| \left| \chi \right|
   \right\},
   \\
   [\nabla (F(\chi))]_{\alpha}
   &\leq
   c
   \left\{
    [\chi]_\alpha \|{ \chi }_{L^\infty} \|{ \nabla \varphi }_{L^\infty}
    +
    \|{ \chi }_{L^\infty}^2 [ \nabla \varphi ]_\alpha
    +
    [\nabla \chi ]_\alpha \|{ \chi }_{L^\infty}
    +
    \|{ \nabla \chi }_{L^\infty} [\chi]_\alpha
   \right\}.
  \end{align*}
  Here, $\left| \cdot \right|$ denotes the norm induced by $\varphi$, $\nabla$ denotes the Levi-Civita connection of this metric, and $[ \cdot ]_{\alpha}$ denotes the unweighted Hölder semi-norm induced by this metric.
\end{proposition}

Finally, the landmark result on the existence of torsion-free $G_2$-structures is the following theorem.
It first appeared in \cite[part I, Theorem A]{Joyce1996}, and we present a rewritten version in analogy with \cite[Theorem 2.7]{Joyce2017}:

\begin{theorem}
\label{theorem:original-torsion-free-existence-theorem}
 Let $\alpha, K_1, K_2, K_3$ be any positive constants.
 Then there exist $\epsilon \in (0,1]$ and $K_4 >0$, such that whenever $0<t \leq \epsilon$, the following holds.
 
 Let $M$ be a compact oriented $7$-manifold, with $G_2$-structure $\varphi$ with induced metric $g$ satisfying $\d \varphi=0$.
 Suppose there is a closed $3$-form $\psi$ on $M$ such that $\d^* \varphi=\d^* \psi$ and
 \begin{enumerate}[label=(\roman*)]
  \item 
  $\|{ \psi }_{C^0} \leq K_1 t^\alpha$,
  $\|{ \psi }_{L^2} \leq K_1 t^{7/2+\alpha}$,
  and
  $\|{ \psi }_{L^{14}} \leq K_1 t^{-1/2+\alpha}$.
  
  \item
  The injectivity radius $\inj$ of $g$ satisfies $\inj \geq K_2 t$.
  
  \item
  The Riemann curvature tensor $\Rm$ of $g$ satisfies $\|{ \Rm }_{C^0} \leq K_3 t^{-2}$.
 \end{enumerate}
 Then there exists a smooth, torsion-free $G_2$-structure $\tilde{\varphi}$ on $M$ such that $\|{ \tilde{\varphi}-\varphi}_{C^0} \leq K_4 t^\alpha$ and $[\tilde{\varphi}]=[\varphi]$ in $H^3(M,\R)$.
 Here all norms are computed using the original metric $g$.
\end{theorem}

The main purpose of \cref{section:torsion-free-structures-on-the-generalised-kummer-construction} will be to prove an improved existence theorem, specialised to the resolution of $T^7/\Gamma$.
This will be achieved in \cref{theorem:torsion-free-final-existence-theorem}.

\section{Harmonic Forms with Decay on the Eguchi-Hanson Space}
\label{section:analysis-on-eguchi-hanson}

The aim of this section is to prove \cref{corollary:kernel-for-eguchi-laplacian}.
That is, to prove that there is only one harmonic form on Eguchi-Hanson space that decays at infinity, up to scaling.
We will achieve this using the techniques of Lockhart and McOwen (cf. \cite{Lockhart1985,Lockhart1987}), which give a description of the harmonic forms on asymptotically conical manifolds, depending on information about harmonic forms on the asymptotic cone.
To this end, we begin by studying the asymptotic cone of Eguchi-Hanson space $\XEH$, namely the cone over $\SO(3)$. 

\subsection{Harmonic Forms on $(\mathbb{C}^2\setminus \{0\})/\{ \pm 1\}$}

In this section, we will list homogeneous harmonic forms on $(\mathbb{C}^2\setminus \{0\})/\{ \pm 1\}$ with decay.
Because $(\mathbb{C}^2\setminus \{0\})/\{ \pm 1\}$ is the cone over $\SO(3)$, we will see that such forms correspond to eigenforms on $\SO(3)$, and we will therefore review the spectral decomposition of the Laplacian on $S^3$ and $\SO(3)$.

We begin by defining cones and homogeneous forms on them.

\begin{definition}
 For a Riemannian manifold $(\Sigma, g_\Sigma)$, the Riemannian manifold $C(\Sigma)=\Sigma \times \R_{> 0}$ endowed with the metric
 $
  g_C=
  \d r^2+r^2 g_\Sigma
 $
 is called the \emph{cone over $\Sigma$}.
\end{definition}

\begin{definition}
 Let $\lambda \in \R$.
 Then $\gamma \in \Omega^k(C(\Sigma))$ is called \emph{homogeneous of order $\lambda$} if there exist $\alpha \in \Omega^{k-1}(\Sigma), \beta \in \Omega^k(\Sigma)$ such that
 \[
  \gamma = r^{\lambda+k}\left( \frac{dr}{r}\wedge\alpha + \beta\right).
 \]
\end{definition}

\begin{remark}
 For $t \in \R_{>0}$ denote by $(\cdot t): C(\Sigma) \rightarrow C(\Sigma)$ the dilation map given by $(\cdot t)(r,\sigma)=(tr,\sigma)$ for $(r,\sigma) \in C(\Sigma)$.
 Then, if $\gamma \in \Omega^k(C(\Sigma))$ is homogeneous of order $\lambda$, we have $(\cdot t)^* |\gamma |_{g_C}=t^\lambda | \gamma |_{g_C}$.
\end{remark}

Homogeneous harmonic forms do not exist for all orders and we make the following definition:

\begin{definition}
 For a cone $C=C(\Sigma)$, denote by $\Delta_{k,\Sigma}$ and $\Delta_{k,C}$ the Laplacian acting on $k$-forms on $\Sigma$ and $C$ respectively.
 The set
 \[
  \mathcal{D}_{\Delta_{k,C}}
  =
  \{
   \lambda \in \R:
   \exists \gamma \in \Omega^k(C), \gamma \neq 0, \text{ homogeneous of order } \lambda \text{ with } \Delta_{k,C} \gamma = 0
  \}
  \]
  is called the set of \emph{critical rates of $\Delta_{k,C}$}.
\end{definition}

It will turn out that critical rates are intimately related to harmonic forms on Eguchi-Hanson space.
This is the content of the next subsection and we will see the set $\mathcal{D}_{\Delta_{k,C}}$ appear again there.
The purpose of the current subsection is to describe $\mathcal{D}_{\Delta_{1,C(\SO(3))}}$ and $\mathcal{D}_{\Delta_{2,C(\SO(3))}}$, which is achieved in \cref{proposition:harmonic-on-eguchi-cone}.
We prepare the proposition by putting some results for harmonic forms on Riemannian cones in place:

\begin{lemma}[Lemma A.1 in \cite{Foscolo2020}]
\label{lemma:laplacian-on-cone-explicit}
Let $\gamma = r^{\lambda+k}\left( \frac{dr}{r}\wedge\alpha + \beta\right)$ be a $k$-form on $C(\Sigma)$ homogeneous of order $\lambda$.
For every function $u=u(r)$ we have $\Delta_{k,C} (u\gamma) = r^{\lambda+k-2}\left( \tfrac{dr}{r}\wedge A + B\right)$, where
\begin{align*}
A &= u\Big( \Delta_{k-1,\Sigma} \alpha - (\lambda+k-2)(\lambda+n-k)\alpha -2d^\ast\beta \Big) -r\dot{u}\left( 2\lambda +n-1\right) \alpha - r^2 \ddot{u}\,\alpha,
\\
B &= u\Big( \Delta_{k,\Sigma} \beta - (\lambda+n-k-2)(\lambda+k)\beta -2d\alpha \Big) -r\dot{u}\left( 2\lambda +n-1\right) \beta - r^2 \ddot{u}\,\beta.
\end{align*}
We also used the shorthand notation $\dot{u}=\frac{\d}{\d r}u$ and $\ddot{u}=\frac{\d^2}{\d r^2}u$.
\end{lemma}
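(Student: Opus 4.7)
The proof is a direct local calculation on $C(\Sigma)$, and the plan is to split $u\gamma$ into its $dr$ and non-$dr$ components and then apply $\Delta = dd^\ast + d^\ast d$, using the cone Hodge star to convert $d^\ast$ into $d$. I would first write
\[
u\gamma \;=\; u(r)\,r^{\mu-1}\,dr\wedge\alpha \;+\; u(r)\,r^\mu\,\beta,\qquad \mu := \lambda+k,
\]
with $\alpha,\beta$ viewed as $r$-independent forms pulled back from $\Sigma$. Using $d = dr\wedge\partial_r + d_\Sigma$ and $d(dr)=0$ this gives
\[
d(u\gamma) \;=\; \partial_r(u r^\mu)\, dr\wedge\beta \;-\; u r^{\mu-1}\,dr\wedge d_\Sigma\alpha \;+\; u r^\mu\, d_\Sigma\beta.
\]
For $d^\ast$ I would invoke the cone Hodge-star formulas
\[
*_C\,\omega \;=\; \pm\, r^{n-2p}\,dr\wedge *_\Sigma\omega,\qquad *_C(dr\wedge\omega) \;=\; \pm\, r^{n-2p}\, *_\Sigma\omega
\]
for $\omega\in\Omega^p(\Sigma)$, and combine them with the formula for $d$ above. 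Iterating once more, summing $dd^\ast$ with $d^\ast d$ and collecting by $dr$-content forces the result into the prescribed shape $r^{\lambda+k-2}(\tfrac{dr}{r}\wedge A + B)$.

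Collecting terms, the $u$-multiplied pieces assemble from three sources: the diagonal transverse Laplacians $\Delta_\Sigma\alpha$ and $\Delta_\Sigma\beta$; cross-couplings $-2\,d^\ast_\Sigma\beta$ in $A$ and $-2\,d_\Sigma\alpha$ in $B$, each arising by combining one contribution from $dd^\ast$ with one from $d^\ast d$; and scalar shifts, which are exactly what $\partial_r$ produces when it acts twice on the $r$-powers appearing in the two Hodge stars. The exponents $n-2(k-1)$ and $n-2k$ coming from $*_C$ produce the $k$-dependent quadratic shifts $(\lambda+k-2)(\lambda+n-k)$ and $(\lambda+n-k-2)(\lambda+k)$ respectively. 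The radial-derivative pieces $r\dot u$ and $r^2\ddot u$ drop out uniformly from $\partial_r(u r^{\mu\pm 1})$, with the first-order coefficient $2\lambda+n-1$ emerging as the sum of the two radial shifts of $u$ contributed by the two applications of the cone Hodge star.

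The argument is algorithmic, and the main obstacle is bookkeeping rather than any conceptual difficulty: keeping the degree-dependent signs of $*_C$, the shifts of $r$-powers, and the distinction between the $dr$ and non-$dr$ components aligned through four successive applications of $d$ or $*_C$. In particular, verifying that the two cross-couplings enter with coefficient \emph{exactly} $-2$, and that the scalar shifts and $\dot u$ and $\ddot u$ coefficients separate cleanly, requires combining two contributions that look a priori asymmetric. As a robust sanity check I would also derive the statement from the general formula for the Laplacian on a Riemannian cone in the $a(r) + dr\wedge b(r)$ decomposition and simply substitute $a = u r^\mu \beta$ and $b = u r^{\mu-1}\alpha$, which reduces the entire verification to a substitution.
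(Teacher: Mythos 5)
The paper does not actually prove this lemma; it is cited verbatim as Lemma A.1 from Foscolo's appendix, with no proof supplied, so there is no in-paper argument to compare against. Your proposed calculation is the standard and correct route for such cone identities: decompose $u\gamma$ into its $dr$ and non-$dr$ components with $\alpha,\beta$ pulled back from $\Sigma$, apply $\Delta = dd^* + d^*d$ using $d = dr\wedge\partial_r + d_\Sigma$, convert $d^*$ to $d$ via the cone Hodge star, and collect by $dr$-content. The structure you identify is exactly right: the transverse Laplacians $\Delta_\Sigma\alpha,\Delta_\Sigma\beta$ on the diagonal, cross-couplings $-2d^*_\Sigma\beta$ and $-2d_\Sigma\alpha$ each assembled from one term of $dd^*$ and one of $d^*d$, the scalar shifts from $\partial_r$ hitting the $r$-powers introduced by the Hodge star, and the $r\dot u$, $r^2\ddot u$ contributions splitting off. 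One bookkeeping caution: with $n = \dim C(\Sigma)$ — which is the convention of the statement, since $(\lambda+k-2)(\lambda+n-k)$ at $k=0$ reduces to $\lambda(\lambda+n-2)$ — the cone Hodge star on a $p$-form pulled back from $\Sigma$ carries the power $r^{n-1-2p}$, not $r^{n-2p}$ as you wrote; this off-by-one will propagate into the quadratic shifts if not tracked, though it disappears if your $n$ secretly meant $\dim\Sigma$. Your closing "sanity check," substituting $a = u r^\mu\beta$ and $b = u r^{\mu-1}\alpha$ into a general formula for the cone Laplacian in the $a(r) + dr\wedge b(r)$ decomposition, is in fact the cleanest way to pin down the coefficients and avoids the bookkeeping hazards you flag; it is likely close to how Foscolo arrives at the stated expression.
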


\begin{theorem}[Theorem A.2 in \cite{Foscolo2020}]
\label{theorem:foscolo-harmonic-forms-on-cone}
Let $\gamma = r^{\lambda+k}\left( \frac{dr}{r}\wedge\alpha + \beta\right)$ be a harmonic $k$-form on $C(\Sigma)$ homogeneous of order $\lambda$.
Then $\gamma$ decomposes into the sum of homogeneous harmonic forms $\gamma = \gamma_1 + \gamma_2 + \gamma_3 + \gamma_4$ where $\gamma_i = r^{\lambda+k}\left( \frac{dr}{r}\wedge\alpha_i + \beta_i\right)$ satisfies the following conditions.
\begin{enumerate}[label=(\roman*)]
\item 
$\beta_1=0$ and $\alpha_1$ satisfies $d\alpha_1=0$ and $\Delta_{k-1,\Sigma}
\alpha_1 = (\lambda+k-2)(\lambda+n-k)\alpha_1$.

\item 
$(\alpha_2,\beta_2) \in\Omega^{k-1}_{coexact} (\Sigma) \times\Omega^k_{exact}(\Sigma)$ satisfies the first-order system
\[
d\alpha_2 = (\lambda+k)\beta_2, \qquad d^*\beta_2 = (\lambda+n-k)\alpha_2.
\]
In particular, if $(\alpha_2,\beta_2)\neq 0$ then $\lambda+k\neq 0\neq \lambda+n-k$ and the pair $(\alpha_2,\beta_2)$ is uniquely determined by either of the two factors, which is a coexact/exact eigenform of the Laplacian with eigenvalue $(\lambda+k)(\lambda+n-k)$.

\item 
$(\alpha_3,\beta_3) \in\Omega^{k-1}_{coexact}(\Sigma) \times \Omega^k_{exact}(\Sigma)$ satisfies the first-order system
\[
d\alpha_3 + (\lambda+n-k-2)\beta_3=0=d^*\beta_3 + (\lambda+k-2)\alpha_3.
\]
In particular, if $(\alpha_3,\beta_3)\neq 0$ then $\lambda+k-2\neq 0\neq \lambda+n-k-2$ and the pair $(\alpha_3,\beta_3)$ is uniquely determined by either of the two factors, which is a coexact/exact eigenform of the Laplacian with eigenvalue $(\lambda+k-2)(\lambda+n-k-2)$.

\item 
$\alpha_4=0$ and $\beta_4$ satisfies $d^* \beta_4=0$ and $\Delta_{k,\Sigma} \beta_4 = (\lambda+n-k-2)(\lambda+k)\beta_4$.
\end{enumerate}
The decomposition $\gamma = \gamma_1 + \gamma_2 + \gamma_3 + \gamma_4$ is unique, except when $\lambda = -\frac{n-2}{2}$; in that case forms of type (ii) and (iii) coincide, and there is a unique decomposition $\gamma = \gamma_1 + \gamma_2 + \gamma_4$.
\end{theorem}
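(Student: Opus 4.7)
The plan is to reduce harmonicity to a coupled PDE system on $\Sigma$, Hodge-decompose both sides, and then carry out an eigenspace-by-eigenspace analysis. Applying \cref{lemma:laplacian-on-cone-explicit} with $u\equiv 1$ shows that $\Delta\gamma=0$ is equivalent to the pair of equations on $\Sigma$
\[
\Delta\alpha - M\alpha = 2\,d^*\beta, \qquad \Delta\beta - N\beta = 2\,d\alpha,
\]
where $M:=(\lambda+k-2)(\lambda+n-k)$ and $N:=(\lambda+n-k-2)(\lambda+k)$.

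Next, use the Hodge decomposition on the closed manifold $\Sigma$ to write $\alpha=\alpha_h+\alpha_e+\alpha_{ce}$ and $\beta=\beta_h+\beta_e+\beta_{ce}$ into harmonic, exact, and coexact summands. Since $\Delta$ preserves Hodge type, $d\alpha=d\alpha_{ce}$ is exact, and $d^*\beta=d^*\beta_e$ is coexact, projecting the two equations onto the three summands separates them into four independent problems: the harmonic projections force $M\alpha_h=0$ and $N\beta_h=0$; the exact part of the first equation gives $\Delta\alpha_e=M\alpha_e$ (a closed eigenform, paired with $\beta=0$); the coexact part of the second gives $\Delta\beta_{ce}=N\beta_{ce}$ (a coclosed eigenform, paired with $\alpha=0$); and what remains is a genuinely coupled system for $(\alpha_{ce},\beta_e)$. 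Collecting the first three classes, with the convention that the unpaired partner vanishes, reproduces the pieces of types (i) and (iv).

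For the coupled remainder, expand $\alpha_{ce}=\sum_i p_i\phi_i$ in an orthonormal basis of coexact Laplace eigenforms $\phi_i$ on $\Sigma$ with eigenvalues $\mu_i>0$. Then $\psi_i:=d\phi_i$ is an exact eigenform with the same eigenvalue satisfying $d^*\psi_i=\mu_i\phi_i$, so writing $\beta_e=\sum_i q_i\psi_i$ the coupled PDE becomes the $2\times 2$ linear system
\[
(\mu_i-M)\,p_i = 2\mu_i\, q_i, \qquad (\mu_i-N)\,q_i = 2\,p_i
\]
for each $i$. A nontrivial solution requires $(\mu_i-M)(\mu_i-N)=4\mu_i$, and setting $a:=\lambda+k$, $b:=\lambda+n-k$ (so $M=(a-2)b$, $N=(b-2)a$) a direct manipulation gives
\[
\mu_i^2-(M+N+4)\mu_i+MN = (\mu_i-ab)\bigl(\mu_i-(a-2)(b-2)\bigr),
\]
so $\mu_i\in\{P,Q\}$ with $P=(\lambda+k)(\lambda+n-k)$ and $Q=(\lambda+k-2)(\lambda+n-k-2)$. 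Collecting the $\mu_i=P$ modes into $(\alpha_2,\beta_2)$ and the $\mu_i=Q$ modes into $(\alpha_3,\beta_3)$, the ratio $p_i/q_i$ forced by the linear system is precisely that encoded in the first-order systems of (ii), respectively (iii).

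Uniqueness of the decomposition then follows from orthogonality of the Hodge summands together with $P\neq Q$; the equality $P=Q$ reduces to $a+b=2$, i.e.\ $\lambda=-\tfrac{n-2}{2}$, and in that case the two eigenvalue bins coalesce and with them types (ii) and (iii), exactly as in the statement. The main obstacle is a small amount of careful bookkeeping: verifying the algebraic identity $(\mu-M)(\mu-N)-4\mu=(\mu-P)(\mu-Q)$ and tracking the normalization $\psi_i=d\phi_i$ faithfully through the $2\times 2$ system so that the resulting ratios really do reproduce the first-order systems in (ii) and (iii); the remaining ingredients are standard spectral theory on the closed manifold $\Sigma$.
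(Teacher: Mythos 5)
Your argument is correct. Applying \cref{lemma:laplacian-on-cone-explicit} with $u\equiv 1$ does give the coupled system $\Delta\alpha-M\alpha=2\d^*\beta$, $\Delta\beta-N\beta=2\d\alpha$ with $M=(\lambda+k-2)(\lambda+n-k)$ and $N=(\lambda+n-k-2)(\lambda+k)$; Hodge-decomposing on the closed link $\Sigma$ correctly separates the harmonic and exact/coexact pieces into types (i) and (iv); the $2\times 2$ system on each coexact eigenmode is set up correctly (using $d^*\psi_i=\mu_i\phi_i$ for $\psi_i=\d\phi_i$); and the identity $(\mu-M)(\mu-N)-4\mu=(\mu-P)(\mu-Q)$ with $P=(\lambda+k)(\lambda+n-k)$, $Q=(\lambda+k-2)(\lambda+n-k-2)$ checks out, with the ratios $p_i/q_i$ forced by the two roots reproducing the first-order systems of (ii) and (iii) and with $P=Q$ exactly when $\lambda=-(n-2)/2$. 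One caveat: the paper being reviewed does not actually prove this theorem — it simply imports it as Theorem A.2 of \cite{Foscolo2020} — so there is no ``paper's own proof'' to compare against; your proof follows the same reduction-to-link spectral analysis that \cite{Foscolo2020} uses, which is the natural route after \cref{lemma:laplacian-on-cone-explicit}.
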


The previous proposition relates harmonic forms on the cone $C(\SO(3))$ to eigenforms of the Laplacian on $\SO(3)$.
The group $\SO(4)$ acts via pullback on complex-valued differential forms on $S^3$, and it turns out that the decomposition of this action into irreducible components gives the spectral decomposition for the Laplacian on $S^3$.
This is made precise in the following two theorems, and as $S^3$ is a double cover of $\SO(3)$, we will get the spectral decomposition of the Laplacian on $\SO(3)$ from them.

\begin{theorem}[Theorem B in \cite{Folland1989}]
\label{theorem:folland-theorem-B}
 The complex-valued $L^2$-functions and $1$-forms on $S^3$ decompose into the following irreducible $\SO(4)$-invariant subspaces:
 \begin{align*}
  \Omega^0(S^3, \C)
  &=
  \bigoplus _{m=1}^\infty
  \Phi_{0,m},
  \\
  \Omega^1(S^3, \C)
  &=
  \bigoplus _{m=1}^\infty
  \left(
   \Phi_{1,m} \oplus
   \Phi_{1,m}^- \oplus
   \Psi_{1,m}
  \right).
 \end{align*}
 Here, $\Phi_{0,m}$, $\Phi_{1,m}, \Phi_{1,m}^-, \Psi_{1,m}$ are defined as follows:
 denote by $j: S^3 \rightarrow \R^4$ the inclusion map and define $z_1=x_1+i x_2$, $z_2=x_3+ix_4$, and $\partial r = \sum_{j=1}^4 x_j \partial x_j$.
 Then let
 \begin{align*}
  \Phi_{0,m}
  &=
  j^* \mathscr{G}_{0,m+1},
  \text{ where $\mathscr{G}_{0,m}$ is the smallest $\SO(4)$-inv. space containing $z_1^{m-1}$},
  \\
  \Phi_{1,m}
  &=
  j^* \mathscr{F}_{1,m},
  \text{ where $\mathscr{F}_{1,m}$ is the smallest $\SO(4)$-inv. space containing $z_1^{m-1} \partial r \lrcorner (\d z_1 \wedge \d z_2)$}.
  \\
  \Phi_{1,m}^-
  &=
  j^* \mathscr{F}^-_{1,m},
  \text{ where $\mathscr{F}^-_{1,m}$ is the smallest $\SO(4)$-inv. space containing $z_1^{m-1} \partial r \lrcorner (\d z_1 \wedge \d \overline{z_2})$}.
  \\
  \Psi_{1,m}
  &=
  j^* \mathscr{G}_{1,m},
  \text{ where $\mathscr{G}_{1,m}$ is the smallest $\SO(4)$-inv. space containing $z_1^{m-1} \d z_1$}.
 \end{align*}
\end{theorem}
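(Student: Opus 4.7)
The plan is to identify $S^3 \cong \SU(2)$ and lift the $\SO(4)$-action on $S^3$ to the action of $\Spin(4) \cong \SU(2) \times \SU(2)$ by $(a,b)\cdot x = a x b^{-1}$, then combine the Peter-Weyl theorem with Hodge theory. For $\Omega^0(S^3, \C)$, Peter-Weyl yields
\[
 L^2(\SU(2), \C) = \bigoplus_{n \geq 0} V_n \otimes V_n^*,
\]
where $V_n$ is the unique irreducible $\SU(2)$-representation of dimension $n+1$, and each summand is $\Spin(4)$-irreducible of dimension $(n+1)^2$, which matches the dimension of the space of degree-$n$ harmonic polynomials on $\R^4$. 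To identify $\Phi_{0,m}$ with $V_{m-1} \otimes V_{m-1}^*$ I would check that $j^*(z_1^{m-1})$ is a highest-weight vector, so that it generates the summand by irreducibility.

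For $\Omega^1(S^3, \C)$ I would use the Hodge decomposition
\[
 \Omega^1(S^3, \C) = \d \Omega^0(S^3, \C) \oplus \d^* \Omega^2(S^3, \C),
\]
valid because $H^1(S^3, \C) = 0$. The first summand is $\SO(4)$-equivariantly isomorphic to the nonconstant part of $\Omega^0(S^3, \C)$, with the $\Psi_{1,m}$ component generated by $j^*(z_1^{m-1}\d z_1)$. For the coexact summand, I would use that on the oriented $3$-manifold $S^3$ the operator $*\d$ preserves $\d^* \Omega^2$ with $(*\d)^2 = \Delta$ there, so coexact $1$-forms split into $\pm \sqrt{\lambda}$-eigenspaces of $*\d$ for each Laplace eigenvalue $\lambda > 0$. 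These two signs correspond to contraction against self-dual versus anti-self-dual $2$-forms on $\R^4$: a direct check shows that $\d z_1 \wedge \d z_2$ is self-dual and $\d z_1 \wedge \d \overline{z_2}$ is anti-self-dual, so the generators $z_1^{m-1} \partial r \lrcorner (\d z_1 \wedge \d z_2)$ and $z_1^{m-1} \partial r \lrcorner (\d z_1 \wedge \d \overline{z_2})$ restrict to highest-weight vectors of the two coexact families $\Phi_{1,m}$ and $\Phi_{1,m}^-$.

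The main obstacle is verifying that the three families exhaust $\Omega^1(S^3, \C)$ and that each is $\SO(4)$-irreducible. I would handle this by a multiplicity count at each Laplace eigenvalue: the multiplicities predicted by Peter-Weyl applied to the homogeneous vector bundle $T^*S^3 \cong \Spin(4) \times_{\Spin(3)} (\R^3)^*$ via Frobenius reciprocity must equal the combined contributions from Hodge theory and the $*\d$-splitting. Since each proposed generator is a highest-weight vector of a distinct $\Spin(4)$-isotype, Schur's lemma together with this dimension count yields the full decomposition.
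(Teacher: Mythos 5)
This statement is quoted verbatim from Folland's paper and carries no proof in the article you are reading --- the author imports Theorems B and C from \cite{Folland1989} as black boxes and only supplies a proof of the subsequent Corollary 2.20, where the passage to $\SO(3)$ is made. So there is no internal argument against which to compare your proposal; what can be checked is whether your sketch is sound as a route to Folland's result.

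Your plan is the standard one and the key facts you invoke are correct: $\Spin(4)\cong\SU(2)\times\SU(2)$ acts on $S^3\cong\SU(2)$ by $(a,b)\cdot x=axb^{-1}$, Peter--Weyl decomposes $L^2(\SU(2),\C)=\bigoplus_n V_n\otimes V_n^*$ with each summand $\Spin(4)$-irreducible; on an oriented Riemannian $3$-manifold $*\d$ satisfies $(*\d)^2=\Delta$ on coclosed $1$-forms; and a direct computation with the standard orientation shows $*(\d z_1\wedge\d z_2)=\d z_1\wedge\d z_2$ and $*(\d z_1\wedge\d\overline{z_2})=-\d z_1\wedge\d\overline{z_2}$, so the two coexact families are correctly tied to constant self-dual and anti-self-dual $2$-forms. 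The exhaustion argument via Frobenius reciprocity on $\Spin(4)\times_{\Spin(3)}(\R^3)^*$ (with $(\R^3)^*\otimes\C\cong V_2$ as a $\Spin(3)$-module, which appears with multiplicity at most one in each $V_{n_1}\otimes V_{n_2}$) is also the right way to see that there are precisely three isotypic families at each eigenvalue. Two small things worth tightening: there is an off-by-one in your identification --- since the paper sets $\Phi_{0,m}=j^*\mathscr{G}_{0,m+1}$, the space $\Phi_{0,m}$ is the restriction of degree-$m$ harmonic polynomials and so should be identified with $V_m\otimes V_m^*$, with highest-weight vector $j^*(z_1^m)$ rather than $j^*(z_1^{m-1})$; and the assertion that $p\cdot\partial r\lrcorner\beta^\pm$ (for $p$ harmonic and $\beta^\pm$ constant self/anti-self-dual) restricts to a $\pm\sqrt{\lambda}$-eigenform of $*\d$ on $S^3$ is the computational heart of the coexact half and deserves to be carried out rather than asserted (a clean route: use $\d\iota_E\beta=2\beta$ for the Euler vector field $E$ together with the relation between $*_{\R^4}$ and $*_{S^3}$ under contraction by $\partial_r$). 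With those repairs, your sketch gives a complete proof of Folland's decomposition.
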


\begin{theorem}[Theorem C in \cite{Folland1989}]
\label{theorem:folland-theorem-C}
 Let $\Phi_{0,m}, \Phi_{1,m}, \Phi_{1,m}^-, \Psi_{1,m}$ as in \cref{theorem:folland-theorem-B}.
 Then 
 \begin{itemize}
     \item 
     $\Phi_{0,m}$ is an eigenspace for the Laplacian with eigenvalue $m(m+2)$,

     \item 
     $\Phi_{1,m} \oplus \Phi_{1,m}^-$ is an eigenspace for the Laplacian with eigenvalue $(m+1)^2$,

     \item 
     $\Psi_{1,m}$ is an eigenspace for the Laplacian with eigenvalue $m(m+2)$.
 \end{itemize}
\end{theorem}

\begin{corollary}
\label{theorem:spectrum-of-laplacian-on-so3}
\label{theorem:spectrum-of-laplacian-on-1-forms}
 Let $S^3$ be endowed with the round metric and $\SO(3)=S^3/\{\pm 1 \}$ be endowed with the quotient metric.
 Then:
 \begin{enumerate}
 \item
 The spectrum of the Laplacian $\Delta_{0,\SO(3)}$ acting on real-valued $L^2$-functions on $\SO(3)$ is:
 \begin{align*}
  \Spec (\Delta_{0,\SO(3)})
  &=
  \{
   k(k+2):
   k \in \mathbb{Z}_{\geq 0},
   k \text{ even}
  \}
  =
  \{0,8,24,\dots\}.
 \end{align*}
 
 \item
 The smallest eigenvalue of the Laplacian $\Delta_{1,\SO(3)}$ acting on real-valued $1$-forms with coefficients in $L^2$ on $\SO(3)$ is $4$ and has multiplicity $6$.
 \end{enumerate}
\end{corollary}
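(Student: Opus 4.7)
The plan is to pull everything back along the Riemannian double cover $\pi\colon S^3 \to \SO(3) = S^3/\{\pm 1\}$. Since $\pi$ is a local isometry and the Laplacian commutes with the deck transformation $-\Id$, the pullback identifies $L^2(\SO(3),\R)$ and $\Omega^1(\SO(3),\R)$ isometrically (up to a constant factor) with the $\{\pm 1\}$-invariant subspaces of the corresponding real $L^2$ spaces on $S^3$, and intertwines the Laplacians. Hence the spectrum on $\SO(3)$ is precisely the part of the $S^3$-spectrum carried by $\{\pm 1\}$-invariant forms. Since Folland's decomposition in \cref{theorem:folland-theorem-B} is into irreducible complex $\SO(4)$-subrepresentations and $-\Id\in\SO(4)$ lies in the centre, Schur's lemma forces it to act on each irreducible as a scalar which squares to $1$; so I only need to read off this sign on the explicit generator of each summand, and then pass from complex to real forms by taking real parts.

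For part (1), the generator $z_1^m$ of $\Phi_{0,m}$ picks up a factor $(-1)^m$ under $-\Id$, so $\Phi_{0,m}$ descends to $\SO(3)$ if and only if $m$ is even; by \cref{theorem:folland-theorem-C} the Laplacian eigenvalue on this summand is $m(m+2)$. Together with the trivially invariant space of constants (eigenvalue $0$), this gives the spectrum $\{m(m+2):m\in 2\Z_{\geq 0}\}=\{0,8,24,\dots\}$.

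Part (2) follows the same recipe. Each coordinate factor $z_j$, $\bar z_j$ and each differential $\d z_j$, $\d\bar z_j$ contributes a sign under $-\Id$. Counting these on the stated generators, one finds that $z_1^{m-1}\,\d z_1\in\Psi_{1,m}$ has total weight $(-1)^m$, whereas $z_1^{m-1}(z_1\,\d z_2-z_2\,\d z_1)\in\Phi_{1,m}$ and $z_1^{m-1}(z_1\,\d\bar z_2-\bar z_2\,\d z_1)\in\Phi_{1,m}^-$ both have total weight $(-1)^{m+1}$. Hence the invariant summands are $\Psi_{1,m}$ for even $m$ (eigenvalue $m(m+2)$, smallest value $8$ at $m=2$) and $\Phi_{1,m}\oplus\Phi_{1,m}^-$ for odd $m$ (eigenvalue $(m+1)^2$, smallest value $4$ at $m=1$). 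Thus the smallest eigenvalue is $4$, realised on $\Phi_{1,1}\oplus\Phi_{1,1}^-$.

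The main computation is the multiplicity. Rewriting $z_1\,\d z_2-z_2\,\d z_1 = \partial r\lrcorner(\d z_1\wedge\d z_2)$, one observes that $\d z_1\wedge\d z_2$ lies in the complexification of the $3$-dimensional space of self-dual $2$-forms on $\R^4$, which is an irreducible $\SO(4)$-representation on which the $\SU(2)_+$ factor of $\SO(4)=(\SU(2)_+\times \SU(2)_-)/\{\pm 1\}$ acts standardly and $\SU(2)_-$ trivially. Since contraction with $\partial r$ is $\SO(4)$-equivariant and injective on this space (as one checks at the point $(1,0,0,0)\in S^3$), one obtains $\dim_\C\Phi_{1,1}=3$; the identical argument with anti-self-dual $2$-forms yields $\dim_\C\Phi_{1,1}^-=3$. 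Both summands are stable under complex conjugation, because the orthogonal element $(x_1,x_2,x_3,x_4)\mapsto(x_1,-x_2,x_3,-x_4)\in\SO(4)$ realises conjugation on $\C^2$ and preserves each summand, so the real dimension of the $4$-eigenspace equals the complex one, namely $6$. This representation-theoretic dimension count is the only non-routine step; everything else is straightforward sign bookkeeping.
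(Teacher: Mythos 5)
Your proposal follows essentially the same route as the paper: pass through the double cover $S^3 \to \SO(3)$, read off antipodal invariance on Folland's explicit generators from \cref{theorem:folland-theorem-B}, use the eigenvalue formulas from \cref{theorem:folland-theorem-C}, and compute the multiplicity by identifying $\Phi_{1,1},\Phi_{1,1}^-$ with $(\Lambda^2_\pm)^\C$ contracted with $\partial r$. Your sign bookkeeping for parts (1) and (2) and the observation that $\partial r\lrcorner\cdot$ is equivariant and nonzero on $(\Lambda^2_\pm)^\C$ match the paper's argument.

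The one step you phrase imprecisely is the passage from complex to real dimension. Pullback by $R\colon(x_1,x_2,x_3,x_4)\mapsto(x_1,-x_2,x_3,-x_4)$ is a $\C$-linear map, while complex conjugation on $\Omega^1(S^3,\C)$ (with respect to the real structure $\Omega^1(S^3,\R)$) is antilinear; they agree on the specific generator $z_1\,\d z_2-z_2\,\d z_1$ because its coefficients are real, but not on its full $\C$-span (they differ on $i\cdot(z_1\,\d z_2-z_2\,\d z_1)$, for instance). So ``$R$ realises conjugation'' cannot be used as stated to conclude conjugation-stability. The cleaner route, which is what the paper does, is simply: self-dual constant $2$-forms on $\R^4$ form a real $\SO(4)$-invariant $3$-space, so $\Phi_{1,1}$ is by construction the complexification of a real representation and is therefore stable under conjugation, whence its real $4$-eigenspace has real dimension~$3$; same for $\Phi^-_{1,1}$, giving multiplicity~$6$. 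Your conclusion is correct; just replace the $R$-argument with this observation.
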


\begin{proof}[Proof of \cref{theorem:spectrum-of-laplacian-on-so3}]
 \leavevmode
 \begin{enumerate}
 \item
 This follows from \cref{theorem:folland-theorem-B,theorem:folland-theorem-C} and the fact that functions in the space $\Phi_{0,m}$ defined in \cref{theorem:folland-theorem-B} are invariant under the antipodal map $(-1):S^3 \rightarrow S^3$ if and only if $m$ is even.
 
 \item
 By \cref{theorem:folland-theorem-C}, the smallest eigenvalue of the Laplacian acting on complex-valued $1$-forms on $S^3$ is $3$.
 The eigenforms in $\Psi_{1,1}$ are the differential of the linear functions in $\Phi_{0,1}$ and therefore not invariant under the antipodal map.
 Thus, the eigenvalue $3$ does not occur on $\SO(3)$.
 
 The next smallest eigenvalue is $4$.
 It is realised, and it remains to check the dimension of its eigenspace:
 for the complex vector spaces defined in \cref{theorem:folland-theorem-B} we have 
 $\Phi_{1,1} \simeq \left( \Lambda^2_+ \right)^{\mathbb{C}}$ 
 and 
 $\Phi_{1,1}^- \simeq \left( \Lambda^2_- \right)^{\mathbb{C}}$, 
 the complexification of (anti-)self-dual constant forms on $\R^4$.
 Here is how to see that $\Phi_{1,1} \simeq \left( \Lambda^2_+ \right)^{\mathbb{C}}$, the other isomorphism is analogous.
 We have 
 \[\d z_1 \wedge \d z_2 = \d x_{13}-\d x_{24}+i \d x_{23}+i \d x_{14} =: \omega.\]
 The element
 $g=\begin{pmatrix}
 0&1&0&0\\
 1&0&0&0\\
 0&0&0&1\\
 0&0&1&0
 \end{pmatrix} \in \SO(4)$
 sends this to $-\d x_{13}+\d x_{24}+i \d x_{23}+i \d x_{14}$, so the smallest $\SO(4)$-invariant space containing $\omega$ must also contain the self-dual form $\d x_{13}-\d x_{24}=\frac{1}{2}(\omega-g \omega)$.
 Because $\Lambda^2_+$ is irreducible, this $\SO(4)$-invariant space must contain all of $(\Lambda^2_+)^{\C}$.
 Contracting with the radial vector field $\partial r$ and restricting to $S^3$ are $\SO(4)$-equivariant operations, one checks that the result is non-zero, and therefore $\Phi_{1,1} \simeq \left( \Lambda^2_+ \right)^{\mathbb{C}}$.
 
 Altogether, $\Phi_{1,1}$ and $\Phi^-_{1,1}$ are representations of $\SO(4)$ of complex dimension $3$.
 They consist of $1$-forms on $S^3$ that are invariant under the antipodal map, which proves the claim. \qedhere
 \end{enumerate}
\end{proof}

It also follows from \cite[Theorem 7.6]{Cheeger1994} together with the Hodge decomposition and the first part of \cref{theorem:spectrum-of-laplacian-on-so3} that the smallest eigenvalue of the Laplacian $\Delta_{1,\SO(3)}$ is $3$.

We can now combine the results about harmonic forms on $C(\SO(3))$ with the spectral decomposition of the Laplacian on $\SO(3)$ to find the critical rates for the Laplacian on $C(\SO(3))$.
The space of covariant constant $2$-forms on $\C^2$ is six-dimensional and one may multiply each such form with the fundamental solution of the Laplace equation $r^{-2}$ to obtain a six-dimensional space of harmonic $2$-forms with rate $-2$.
The following proposition states that there are no other harmonic $2$-forms or $1$-forms up to rate $0$:

\begin{proposition}
\label{proposition:harmonic-on-eguchi-cone}
 \leavevmode
 \begin{enumerate}
  \item
  There are no harmonic $1$-forms on $(\mathbb{C}^2\setminus \{0\})/\{ \pm 1\}$ which are homogeneous of order $\lambda$ for $\lambda \in [-2,0)$.
  In other words $\mathcal{D}_{\Delta_{1,(\mathbb{C}^2\setminus \{0\})/\{ \pm 1\}}} \cap [-2,0) = \emptyset$.
  
  \item
  There is a six-dimensional space of harmonic $2$-forms on $(\mathbb{C}^2\setminus \{0\})/\{ \pm 1\}$ which are homogeneous of order $-2$.
  
  There are no harmonic $2$-forms on $(\mathbb{C}^2\setminus \{0\})/\{ \pm 1\}$ which are homogeneous of order $\lambda$ for $\lambda \in (-2,0)$.
 \end{enumerate}
\end{proposition}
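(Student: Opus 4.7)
The plan is to apply Foscolo's decomposition (\cref{theorem:foscolo-harmonic-forms-on-cone}) to the Riemannian cone $C(\SO(3)) \simeq (\C^2 \setminus \{0\})/\{\pm 1\}$, which has dimension $n = 4$ and link $\SO(3)$ with the round metric. This reduces the existence of a homogeneous harmonic $k$-form of order $\lambda$ to a finite list of eigenvalue equations on the link, which I then match against the explicit spectra of $\Delta_{0,\SO(3)}$ and $\Delta_{1,\SO(3)}$ supplied by \cref{theorem:spectrum-of-laplacian-on-so3}.

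For part (1), $k = 1$ and $\lambda \in [-2, 0)$, I would go through the four types in turn. Type (i) forces $\alpha_1$ to be a constant function on $\SO(3)$ with $(\lambda-1)(\lambda+3)\alpha_1 = 0$, giving only $\lambda \in \{-3, 1\}$. Types (ii) and (iii) both demand a mean-zero eigenfunction on $\SO(3)$ with eigenvalue $(\lambda+1)(\lambda+3)$ respectively $\lambda^2 - 1$, both of which lie in $[-1, 3)$ on the interval $[-2, 0)$, while the smallest nonzero function eigenvalue on $\SO(3)$ is $8$. Type (iv) requires a coclosed 1-form eigenform with eigenvalue $(\lambda+1)^2 \in [0, 1]$; since $H^1(\SO(3); \R) = 0$ the harmonic case is excluded, and the remaining 1-form eigenvalues start at $4$. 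Hence no homogeneous harmonic 1-form of such order exists.

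For part (2), $k = 2$, the same case split applies. For $\lambda \in (-2, 0)$: type (i) needs $\alpha_1$ a closed (hence exact, by $H^1 = 0$) 1-form with eigenvalue $\lambda(\lambda+2) \in (-1, 0)$, impossible since exact 1-form eigenvalues equal the nonzero function eigenvalues $\{8, 24, \ldots\}$. Types (ii) and (iii) require coexact 1-form eigenvalues $(\lambda+2)^2$ and $\lambda^2$, both in $(0, 4)$, strictly below the coexact minimum $4$ from \cref{theorem:spectrum-of-laplacian-on-so3}. Type (iv) needs a coclosed 2-form eigenform with eigenvalue $\lambda(\lambda+2) \in (-1, 0)$; applying the Hodge star on the 3-manifold $\SO(3)$ converts $\beta_4$ into a closed, hence exact, 1-form and again reduces to the function eigenvalues $\{8, 24, \ldots\}$. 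So no order-$\lambda$ harmonic 2-form exists on $(-2, 0)$.

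At $\lambda = -2$ the eigenvalue constraints in types (i), (ii) and (iv) all collapse to $0$, which via the identifications above forces harmonicity of the corresponding form on $\SO(3)$ and hence (by $H^1 = 0$) the zero form. Type (iii) on the other hand demands a coexact 1-form with eigenvalue $\lambda^2 = 4$, which is exactly the smallest 1-form eigenvalue and has multiplicity $6$. Since Foscolo's theorem says $(\alpha_3, \beta_3)$ is uniquely determined by $\alpha_3$ alone, this eigenspace produces a 6-dimensional space of homogeneous harmonic 2-forms of order $-2$, as claimed. The main bookkeeping obstacle, and the point worth checking separately, is the degenerate case $\lambda = -1 = -(n-2)/2$ where types (ii) and (iii) coincide; both still demand a coexact 1-form of eigenvalue $1$, which is below $4$, so no new harmonic forms appear there and the count is unaffected.
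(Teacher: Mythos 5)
Your argument follows the same route as the paper's: apply Foscolo's decomposition (\cref{theorem:foscolo-harmonic-forms-on-cone}) over the link $\SO(3)$ of the cone $(\C^2\setminus\{0\})/\{\pm 1\}$ (so $n=4$), case by case for each of the four types, and compare the resulting eigenvalue to the explicit spectra recorded in \cref{theorem:spectrum-of-laplacian-on-so3}. There is no methodological difference; the six-dimensional space in part (2) is produced by the same mechanism (type (iii) at $\lambda=-2$, matching the six-dimensional $1$-form eigenspace at eigenvalue $4$).

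Your version is in a couple of places somewhat more careful than the printed proof. First, two of the eigenvalues written in the paper appear to be typos: for $k=1$, type (iii) the link eigenvalue $(\lambda+k-2)(\lambda+n-k-2)$ should read $(\lambda-1)(\lambda+1)$, not $(\lambda+1)(\lambda-3)$; and for $k=2$, type (iv) the eigenvalue $(\lambda+n-k-2)(\lambda+k)$ should read $\lambda(\lambda+2)$, not $(\lambda+2)^2$. You use the correct values; in both instances the corrected quantity still lies below the relevant spectral threshold, so the paper's conclusion survives its slips. Second, you deal explicitly with the boundary value $\lambda=-2$ (where several eigenvalue constraints degenerate to $0$, handled via $H^1(\SO(3))=0$) and with the exceptional line $\lambda=-(n-2)/2=-1$ of Foscolo's theorem, where types (ii) and (iii) merge; the paper leaves both of these points implicit. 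Neither difference changes the substance, but your write-up closes small gaps the original leaves to the reader.
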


\begin{proof}
 It follows from point two in \cref{lemma:the-eguchi-hanson-metric} that $C(\SO(3))$ and $(\mathbb{C}^2\setminus \{0\})/\{ \pm 1\}$ are isometric as Riemannian manifolds and we prove the statements on $C(\SO(3))$.
 \begin{enumerate}
  \item
  Let $\lambda \in [-2,0)$ and assume there exists a harmonic homogeneous $1$-form of order $\lambda$ on $C(\SO(3))$.
  We show that the $1$-form must vanish by showing that forms satisfying any of the cases (i), (ii), (iii), and (iv) from \cref{theorem:foscolo-harmonic-forms-on-cone} are zero.
  Using the notation from the theorem, we get the following:
  
  \begin{enumerate}[label=(\roman*)]
   \item
   In this case, $\Delta \alpha_1=(\lambda-1)(\lambda+3)\alpha_1$.
  For $\lambda \in [-2,0)$, the factor $(\lambda-1)(\lambda+3)$ is negative, so our assumption implies that $\alpha_1$ is a closed $0$-form that is an eigenform of $\Delta_{\SO(3)}$ for a negative eigenvalue, which implies $\alpha_1=0$ by \cref{theorem:spectrum-of-laplacian-on-so3}.
  
  \item
  In this case, $\beta_2$ is an exact $1$-form with $\Delta_{\SO(3)} \beta_2=(\lambda+1)(\lambda+3)\beta_2$.
  We have $(\lambda+1)(\lambda+3)<8$ for $\lambda \in [-2,0)$, and therefore $\beta_2=0$ as in case (i).
  
  \item
  In this case, $\beta_3$ is an exact $1$-form with $\Delta_{\SO(3)} \beta_3=(\lambda+1)(\lambda-3)\beta_3$, and $\beta_3=0$ follows as before.
  
  \item
  In this case, $\beta_4$ is a co-closed $1$-form with $\Delta_{\SO(3)} \beta_3=(\lambda+1)^2\beta_3$.
  For $\lambda \in [-2,0)$, we have $(\lambda+1)^2 < 3$, and because of \cref{theorem:spectrum-of-laplacian-on-1-forms} this implies $\beta_4=0$.  
  \end{enumerate}
  
  \item
  Let $\lambda \in [-2,0)$.
  Going through the cases (i), (ii), (iii), and (iv) from \cref{theorem:foscolo-harmonic-forms-on-cone}, we will find that there are six linearly independent harmonic homogeneous $2$-forms of order $-2$ in case (iii), but no other harmonic homogeneous forms.
  Using the notation from the theorem, we get the following:
  
  \begin{enumerate}[label=(\roman*)]
   \item
   In this case, we get a $1$-form that is an eigenform of the Laplacian on $\SO(3)$ for the eigenvalue $\lambda(\lambda + 2)<0$, which must be $0$ by \cref{theorem:spectrum-of-laplacian-on-so3}.
   
   \item
   In this case, we get a $1$-form that is an eigenform of the Laplacian on $\SO(3)$ for the eigenvalue $(\lambda + 2)^2<4$, which must be $0$ by \cref{theorem:spectrum-of-laplacian-on-so3}.
   
   \item
   In this case, we get a $1$-form that is an eigenform of the Laplacian on $\SO(3)$ for the eigenvalue $\lambda^2$.
   There are six of these by \cref{theorem:spectrum-of-laplacian-on-so3} for $\lambda = -2$ and none for $\lambda \in (-2, 0)$.
   In the case of $\lambda = -2$ all six eigenforms give rise to harmonic $2$-forms of order $\lambda = -2$ on $C(\SO(3))$.
   
   \item
   In this case, we get a $2$-form $\beta_4$ that is an eigenform of the Laplacian on $\SO(3)$ for the eigenvalue $(\lambda + 2)^2<4$.
   The Hodge dual $* \beta_4$ is then a $1$-form that is an eigenform for the same eigenvalue, which must be $0$ by \cref{theorem:spectrum-of-laplacian-on-so3}.
   \qedhere
  \end{enumerate}
 \end{enumerate}
\end{proof}

For an application later we will not only need to know how many harmonic homogeneous forms there are, but also how many harmonic homogeneous forms \emph{with $\log(r)$ coefficients} there are.
Often, these two notions coincide, and the following proposition asserts that this is also the case in our setting.

\begin{definition}
 Let $\Sigma$ be a connected Riemannian manifold and $C=C(\Sigma)$ its cone.
 For $\lambda \in \R$, define
 \[
  \mathcal{K}(\lambda)_{\Delta_{p,C(\Sigma)}}
  =
  \left\{
   \begin{matrix}
    \text{
     $\gamma = \sum_{j=0}^m (\log r)^j \gamma_j$ for $m \geq 0$, $\gamma_j \in \Omega^p(C(\Sigma))$, 
     such that
    }
    \\
    \text{
     $\Delta_{p,C(\Sigma)} \gamma=0$, where each $\gamma_j$ is homogeneous of order $\lambda$
    }
   \end{matrix}
  \right\}.
 \]
\end{definition}

\begin{proposition}
\label{proposition:critical-dimension}
 Assume that $\dim C=4$.
 Let $\gamma = \sum_{j=0}^m (\log r)^j \gamma_j \in \mathcal{K}(-2)_{\Delta_{2,C(\Sigma)}}$, then $\gamma_j=0$ for $j>0$.
\end{proposition}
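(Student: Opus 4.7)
The plan is to apply \cref{lemma:laplacian-on-cone-explicit} termwise to $\gamma = \sum_{j=0}^m (\log r)^j \gamma_j$ and extract a descending recursion that forces each $\gamma_j$ with $j \geq 1$ to vanish. Since $\lambda = -2$ and $k = 2$ give $r^{\lambda+k} = 1$, I write each homogeneous piece as $\gamma_j = \tfrac{dr}{r}\wedge \alpha_j + \beta_j$ with $\alpha_j\in\Omega^1(\Sigma)$ and $\beta_j\in\Omega^2(\Sigma)$. Setting $u = (\log r)^j$ in \cref{lemma:laplacian-on-cone-explicit}, the terms $r\dot u = j(\log r)^{j-1}$ and $r^2\ddot u = j(j-1)(\log r)^{j-2} - j(\log r)^{j-1}$ produce lower-order polynomials in $\log r$, so the equation $\Delta\gamma = 0$ decouples into vanishing of each coefficient of $(\log r)^l$. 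A direct computation gives, for every $l$, a system of the form
\begin{align*}
 P(\alpha_l,\beta_l) + (6-n)(l+1)\,(\alpha_{l+1},\beta_{l+1}) - (l+1)(l+2)\,(\alpha_{l+2},\beta_{l+2}) = 0,
\end{align*}
where $n = \dim\Sigma$ and $P(\alpha,\beta) := \bigl(\Delta_\Sigma\alpha - (8-2n)\alpha - 2d^*\beta,\; \Delta_\Sigma\beta - 2d\alpha\bigr)$.

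At $l = m$ the recursion reduces to $P(\alpha_m,\beta_m) = 0$, confirming that the top coefficient corresponds to a genuine harmonic homogeneous 2-form of order $-2$. At $l = m-1$ it reads $P(\alpha_{m-1},\beta_{m-1}) = -(6-n)\,m\,(\alpha_m,\beta_m)$, placing $(\alpha_m,\beta_m)$ simultaneously in $\ker P$ and $\operatorname{Im} P$. The decisive observation is that $P$ is formally self-adjoint on $\Omega^1(\Sigma)\oplus\Omega^2(\Sigma)$ with respect to the $L^2$ inner product: the only off-diagonal contribution, $\langle -2d^*\beta,\alpha'\rangle$, equals $\langle \beta,-2 d\alpha'\rangle$ by integration by parts (using compactness of $\Sigma$) and matches the other off-diagonal term in $P(\alpha',\beta')$; the remaining pieces are manifestly symmetric. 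Consequently $\ker P \perp \operatorname{Im} P$, and pairing $(\alpha_m,\beta_m)$ against itself forces $(\alpha_m,\beta_m) = 0$ provided $(6-n)m \neq 0$.

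In the setting required by the paper one has $\Sigma = \SO(3)$, so $n = 3$ and the coefficient $(6-n)m = 3m$ is nonzero for every $m \geq 1$. Having shown $\gamma_m = 0$, the log-polynomial $\gamma$ has effective degree at most $m-1$ and the same argument applies; a descending induction on $m$ then yields $\gamma_j = 0$ for every $j \geq 1$, as claimed.

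I expect the main obstacle to be the careful bookkeeping needed to derive the precise constants in front of $\gamma_{l+1}$ and $\gamma_{l+2}$ in the recursion, and in particular the verification that the leading resonance coefficient $(6-n)m$ is nonzero for the relevant values (here $n = 3$ and $m \geq 1$; the statement would require a separate argument using the $(\log r)^{l-2}$ contribution if $\dim\Sigma = 6$). Once this arithmetic is in place, the self-adjointness of $P$ and the standard ``kernel meets image only at zero'' implication close the proof routinely.
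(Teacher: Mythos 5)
Your proof is correct, and it actually fills in a step that the paper's write-up glosses over. The paper asserts that $\Delta\alpha_j - 2\d^*\beta_j$ and $\Delta\beta_j - 2\d\alpha_j$ vanish for \emph{every} $j$, citing the structure of harmonic homogeneous $2$-forms from \cref{proposition:harmonic-on-eguchi-cone}. But only the top coefficient $\gamma_m$ is a priori of that type: the $(\log r)^m$ coefficient of $\Delta\gamma$ equals the indicial operator applied to $\gamma_m$ (there is nothing at higher log-order to feed into it), whereas the $\gamma_j$ with $j<m$ are just arbitrary homogeneous $2$-forms of order $-2$ and need not satisfy any of $\alpha_j$ coexact, $2\beta_j=\d\alpha_j$, or $\Delta_\Sigma\alpha_j=4\alpha_j$. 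Your argument supplies exactly the no-resonance mechanism that closes this gap: isolate the indicial operator $P$, check it is formally self-adjoint on $\Omega^1(\Sigma)\oplus\Omega^2(\Sigma)$, read $\gamma_m\in\Ker P$ from the $(\log r)^m$ coefficient, read $\gamma_m\in\Im P$ from the $(\log r)^{m-1}$ coefficient (up to the nonzero scalar), and conclude $\gamma_m=0$ from $\Ker P\perp\Im P$, then descend. This makes explicit the feature the paper leaves implicit: the log terms vanish because $P$ is self-adjoint and the resonance coefficient is nonzero, not because the $\gamma_j$ are all harmonic.

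One small correction: in \cref{lemma:laplacian-on-cone-explicit} the constant $n$ is $\dim C(\Sigma)$, not $\dim\Sigma$; one checks this on radial functions, where $n$ appears in the drift term $(2\lambda+n-1)r\dot u$ and must equal the cone dimension to reproduce $\Delta = -\partial_r^2-\tfrac{n-1}{r}\partial_r$. Here $\Sigma=\SO(3)$, so $n=4$ and the zeroth-order term in your $P$ drops out (as in the paper), and the resonance coefficient is $(6-n)m=2m$ rather than $3m$. This does not affect the proof, since $2m\neq 0$ for $m\geq 1$, but the degenerate case you flag at the end would occur at $n=6$, i.e.\ $\dim\Sigma=5$, not $\dim\Sigma=6$.
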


\begin{proof}
 By \cite[Proposition A.6]{Foscolo2020} we have $m \leq 1$ and $\Delta_{2,C(\Sigma)} \gamma_0=0=\Delta_{2,C(\Sigma)} \gamma_1$.
 To prove the claim, it suffices to show that $\gamma_1=0$.
 Write $\gamma_1 = \left( \frac{dr}{r}\wedge\alpha + \beta \right)$.
 By \cref{lemma:laplacian-on-cone-explicit}:
 \begin{align}
     \Delta_{2,C(\Sigma)} \gamma_1&=r^{-2}
     \left(
     \frac{\d r}{r} \wedge A+B)
     \right),
     \quad
     \text{ where}
     \\
     \begin{split}
     A&=
     \log (r)
     \left( \Delta_{1,\Sigma} \alpha-2\d^* \beta \right)
     +2 \alpha,
     \\
     B&=
     \log (r)
     \left( \Delta_{2,\Sigma} \beta - 2 \d \alpha \right)
     +2 \beta.
     \end{split}
 \end{align}
 Thus, comparing the degree $0$ coefficient of the polynomials in $\log(r)$ in the equation $0=\Delta \gamma_1$ immediately gives $\alpha=0=\beta$.
\end{proof}

\subsection{Harmonic Forms on Eguchi-Hanson Space}
\label{subsubsection:harmon-forms-on-eh-space}

In the previous section we looked at certain harmonic forms on $(\C^2 \setminus \{0\})/\{\pm 1 \}$.
The Eguchi-Hanson space $\XEH$ is asymptotic to the cone $(\C^2 \setminus \{0\})/\{\pm 1 \}$, and we can say a great deal about harmonic forms on $\XEH$ just from knowing the harmonic forms on $(\C^2 \setminus \{0\})/\{\pm 1 \}$.
This is a consequence of the work of Lockhart and McOwen (cf. \cite{Lockhart1985,Lockhart1987}) and will be the content of this section.

We will want statements about harmonic forms in certain weighted Hölder spaces.
These spaces are defined in the following:

\begin{definition}
\label{definition:weighted-hoelder-norms}
 Define the weight functions
 \begin{align*}
  w: \XEH & \rightarrow \R _{\geq 0}
  &
  w: \XEH \times \XEH & \rightarrow \R _{\geq 0}
  \\
  x & \mapsto 
  1+|\rho(x)|,
  &
  (x,y) & \mapsto 
  \min
  \{
   w(x),w(y)
  \}
  .
 \end{align*}
 Here, $\rho : \XEH \rightarrow \C^2/\{\pm 1\}$ is the blowup map explained in \cref{lemma:eguchi-hanson-blowup-map}.
 Let $U \subset \XEH$.
 For $\alpha \in (0,1)$, $\beta \in \R$, $k \in \N$, and $f \in \Omega^p(\XEH)$ define the \emph{weighted Hölder norm of $f$} via
 \begin{align*}
  \left[
   f
  \right]
  _{C^{0,\alpha}_{\beta}(U)}
  &:=
  \sup
  _{
   \substack{
   x,y \in U \\
   d_{g_{(1)}} (x,y) \leq w(x,y)
   }
  }
  w(x,y)^{\alpha-\beta}
  \frac{\left| f(x)-f(y) \right|_{g_{(1)}}}{d_{g_{(1)}}(x,y)^\alpha},
  \\
  \|{
   f
  }_{L^\infty_{\beta}(U)}
  &:=
  \|{
   w^{-\beta}
   f
  }_{L^\infty(U)},
  \\
  \|{
   f
  }_{C^{k,\alpha}_{\beta}(U)}
  &:=
  \sum_{j=0}^k
  \|{
   \nabla^j f
  }_{L^\infty_{\beta-j}(U)}
  +
  \left[
   \nabla^j f
  \right]_{C^{0,\alpha}_{\beta-j}(U)}
 \end{align*}
 In these definitions, all vector norms are computed using the metric $g_{(1)}$ on $\XEH$, and the appearing connection is the Levi-Civita connection of this metric.
 The term $f(x)-f(y)$ in the first line denotes the difference between $f(x)$ and the parallel transport of $f(y)$ to the fibre $\Omega^p(\XEH)|_{x}$ along one of the shortest geodesics connecting $x$ and $y$.
 When $U$ is not specified, take $U=\XEH$.
\end{definition}

Sobolev norms with these weight functions were introduced in \cite{Lockhart1985,Lockhart1987}.
The use of the corresponding Hölder norms can be traced back at least to \cite[Section 9]{Lee1987}.
Throughout the article we will set $\beta$ to be a negative number.
Informally, an element in the $C^{k,\alpha}_{\beta}$ Hölder space decays like $d_{g_{(1)}}(\cdot, \rho^{-1}(0))^\beta$, as $d_{g_{(1)}}(\cdot, \rho^{-1}(0)) \rightarrow \infty$.
Even for the choice of $\beta=0$ these norms differ from ordinary Hölder norms, because a weighting is applied to derivatives.

We will now make the meaning of \emph{$\XEH$ being asymptotic to a cone} precise.

\begin{definition}
\label{definition:asymptotically-conical}
 Let $\Sigma$ be a connected Riemannian manifold and $C=C(\Sigma)$ be its cone with cone metric $g_C$.
 A Riemannian manifold $(M, g_M)$ is called \emph{asymptotically conical with cone $C$ and rate $\mu <0$} if there exists a compact subset $L \subset M$, a number $R>0$, and a diffeomorphism $\phi: (R, \infty) \times \Sigma \rightarrow M \setminus L$ satisfying
 \[
  |
   \nabla^k(\phi^* (g_M)-g_C)
  |_{g_C}
  =
  \mathcal{O}(\varrho^{\mu-k})
  \text{ for all }
  k \geq 0
  \text{ as }
  \varrho \rightarrow \infty.
 \]
 Here, $\nabla$ denotes the Levi-Civita connection with respect to $g_C$ and $\varrho: (0, \infty) \times \Sigma \rightarrow (0, \infty)$ is the projection onto the first component.
\end{definition}

The following is then a consequence of \cref{lemma:eguchi-hanson-ALE-condition}:

\begin{proposition}
 The Eguchi-Hanson space $\XEH$ endowed with the metric $g_{(1)}$ is asymptotically conical with cone $C=C(\SO(3))$ and rate $\mu=-4$.
\end{proposition}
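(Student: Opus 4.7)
The plan is to use the explicit formulae from \cref{lemma:the-eguchi-hanson-metric} and compare $g_{(k)}$ directly with $g_{(0)}$, exploiting the second point of that lemma which already identifies $(\SO(3)\times\R_{>0}, g_{(0)})$ with the smooth part of $\mathbb{H}/\{\pm 1\}$, hence with the smooth part of the cone $C(\SO(3))$. First I would introduce the cone radial coordinate $\varrho := 2r^{1/2}$, which is the antiderivative of $f_0^{-1}(r)$, and check that under this substitution $g_{(0)} = \d\varrho^2 + \tfrac{\varrho^2}{4}\sum_i(\eta^i)^2$, so the induced metric on $\Sigma=\SO(3)$ is $g_\Sigma := \tfrac14\sum_i(\eta^i)^2$. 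The diffeomorphism required by \cref{definition:asymptotically-conical} is then
\[
 \phi\colon (R,\infty)\times\SO(3) \longrightarrow X\setminus L, \qquad \phi(\varrho,u) := \hat\Psi\bigl(u,\varrho^2/4\bigr),
\]
for $R > (16k)^{1/4}$ and $L := X \setminus \phi((R,\infty)\times\SO(3))$, which is a closed disk bundle over the exceptional $S^2$ and hence compact.

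The key computation is to expand $\phi^* g_{(k)}$ in the $g_C$-orthonormal coframe $\tilde e^0 := \d\varrho$, $\tilde e^i := \tfrac{\varrho}{2}\eta^i$. Using the formulae for $\d t, e^1, e^2, e^3$ a direct calculation should yield
\[
 \phi^* g_{(k)} - g_C = (h-1)\bigl((\tilde e^0)^2+(\tilde e^1)^2\bigr) + (\tilde h - 1)\bigl((\tilde e^2)^2 + (\tilde e^3)^2\bigr),
\]
where $h(\varrho) := (1 + 16k\varrho^{-4})^{-1/2}$ and $\tilde h(\varrho) := h(\varrho)^{-1}$. Both $h-1$ and $\tilde h-1$ are smooth functions of the single variable $\varrho^{-4}$ vanishing at zero, so Taylor expansion around $\varrho^{-4}=0$ immediately gives $|\phi^* g_{(k)} - g_C|_{g_C} = O(\varrho^{-4})$, which is the $j = 0$ case of the required estimate.

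For the higher derivatives $|\hat\nabla^j(\phi^* g_{(k)} - g_C)|_{g_C} = O(\varrho^{-4-j})$ I would argue by induction on $j$. The coefficient functions depend only on $\varrho$ and are smooth in $\varrho^{-4}$; on the cone, $\hat\nabla^j$ applied to a radial function $f$ is controlled by $\sum_{i\le j}\varrho^{i-j}|f^{(i)}(\varrho)|$, and covariant derivatives of the symmetric products $(\tilde e^\mu)^2$ pick up Christoffel factors of size $O(\varrho^{-1})$ because $g_C$ is homogeneous of degree $2$ under radial rescaling. Combining these via the Leibniz rule then yields the claimed decay. The only genuine difficulty is bookkeeping in this last induction; conceptually the entire result reduces to the observation that the perturbation $g_{(k)}-g_{(0)}$ is governed by the single dimensionless ratio $k\varrho^{-4}$.
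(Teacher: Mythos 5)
Your proof is correct, but it takes a different route from the paper. The paper's proof of this proposition is a one-liner: it simply observes that the asymptotically-conical property (with cone $C(\SO(3))$, i.e.\ the flat cone $(\C^2\setminus\{0\})/\{\pm1\}$, at rate $\nu=-4$) is an immediate restatement of the ALE property already established in the second point of \cref{lemma:eguchi-hanson-ALE-condition}; that result was in turn proved via the decay of the primitive $\tau_1^{(k)}$ of $\omega_1^{(k)}-\omega_1^{(0)}$, using that the Hyperk\"ahler triple determines the metric. You instead discard the symplectic viewpoint and compute $\phi^*g_{(k)}-g_C$ directly in the cone coframe $\tilde e^0=\d\varrho$, $\tilde e^i=\tfrac{\varrho}{2}\eta^i$, obtaining the clean formula with coefficients $h=(1+16k\varrho^{-4})^{-1/2}$ and $\tilde h=h^{-1}$; I checked this against $g_{(k)}=f_k^{-2}\d r^2+r^2f_k^{-2}(\eta^1)^2+f_k^2\bigl((\eta^2)^2+(\eta^3)^2\bigr)$ and it is right. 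The remaining covariant-derivative bookkeeping is sketched rather than carried out, but the two ingredients you invoke --- that $h-1$ and $\tilde h-1$ are smooth in the single variable $\varrho^{-4}$ and vanish at $0$, and that Christoffel symbols of $g_C$ in the scaled coframe are $O(\varrho^{-1})$ --- do yield $|\hat\nabla^j(\phi^*g_{(k)}-g_C)|_{g_C}=O(\varrho^{-4-j})$, so the sketch closes. Your approach has the advantage of being fully self-contained (no appeal to the Kähler forms or to \cref{lemma:eguchi-hanson-ALE-condition}) and works uniformly for all $k>0$ without the $k\le1$ restriction that appears in the intermediate estimate of that lemma; the paper's proof is shorter because it amortises the computation across the earlier ALE statement. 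A small cosmetic remark: the constraint $R>(16k)^{1/4}$ on the compact set is unnecessary --- any $R>0$ makes $L=X\setminus\phi((R,\infty)\times\SO(3))$ a compact disk bundle over the exceptional sphere.
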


We then have the following results about harmonic forms in $L^2$ on Eguchi-Hanson space:

\begin{lemma}
\leavevmode
\label{lemma:harmonic-forms-on-eguchi-hanson-space}
 \begin{enumerate}
 \item
  We have $H^2_{\text{sing}}(\XEH)=H^2_{\text{deRham}}(\XEH)=\R$.
  Define $\nu \in \Omega^2(\XEH)$ to be
  \begin{align}
  \label{equation:nu}
   \nu
   :=
   f_1(r)^{-6}r
   \d r \wedge \eta^1
   -
   f_1(r)^{-2}
   \eta^2 \wedge \eta^3
  \end{align}  
  and endow $\XEH$ with the metric $g_{(1)}$.
  Then $\nu \in L^2(\Lambda^2(\XEH))$, $\Delta_{g_{(1)}} \nu=0$, $[\nu]$ generates $H^2_{\text{deRham}}(\XEH)$, and $\nu$ is the unique element in $L^2(\Lambda^2(\XEH)) \cap [\nu]$ satisfying $\Delta_{g_{(1)}} \nu=0$.
  Moreover, $\nu \in C^{2,\alpha}_{-4}(\Lambda^2(\XEH))$.
  Away from the exceptional orbit $\rho^{-1}(0) \simeq S^2$, we have that
  \begin{align*}
   \nu
   =
   \d \theta
   \text{, where }
   \theta
   =
   - f_1(r)^{-2} \eta^1.
  \end{align*}
  
  \item
  The $L^2$-kernels of $\Delta_{g_{(1)}}$ acting on forms of different degrees are as follows:
  \begin{align*}
   \Ker (\Delta_{g_{(1)}}:L^2(\Lambda^2(\XEH))\rightarrow L^2(\Lambda^2(\XEH)) )
   & = \< \nu \>,
   \\
   \Ker (\Delta_{g_{(1)}}:L^2(\Lambda^p(\XEH))\rightarrow L^2(\Lambda^p(\XEH)) )
   & = 0
   \text{ for }
   p \neq 2.
  \end{align*}
  For $\beta \in [-4,-2)$ they coincide with the $C^{2,\alpha}_{\beta}$-kernels.
 \end{enumerate}
\end{lemma}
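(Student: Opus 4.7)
The proof splits into establishing the explicit properties of the 2-form $\nu_k$ and then identifying the $L^2$-kernel of $\Delta_{g_{(k)}}$ in every degree. First I would record the topology: $X = \SO(3) \times_{\SO(2)} V$ deformation retracts onto the zero section $S^2$, so $H^2_{\text{sing}}(X) = H^2_{\text{dR}}(X) = \R$. The identity $\d \lambda_k = \nu_k$ on $X \setminus S^2$ is a direct calculation from $\d f_k = \tfrac{r}{2} f_k^{-3} \d r$, which gives $\d(-f_k^{-2}) = r f_k^{-6} \d r$ and hence the $\d r \wedge \eta^1$ summand, while the Maurer--Cartan relation $\d \eta^1 = \eta^2 \wedge \eta^3$ (for the normalisation of $\eta^1,\eta^2,\eta^3$ fixed in \cref{lemma:the-eguchi-hanson-metric}) produces the $\eta^2 \wedge \eta^3$ summand. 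The formula for $\nu_k$ extends smoothly across $S^2$ when rewritten in the bundle coordinates on $\SO(3) \times_{\SO(2)} V$ (the term $r \d r \wedge \eta^1$ becomes a combination of Cartesian 2-forms on the fibre $V \simeq \R^2$), so $\nu_k$ is a globally defined smooth closed 2-form.

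For harmonicity, I would rewrite $\nu_k$ in the orthonormal coframe $(\d t, e^1, e^2, e^3)$ of \cref{lemma:the-eguchi-hanson-metric} to get $\nu_k = f_k^{-4}(\d t \wedge e^1 - e^2 \wedge e^3)$, which is pointwise anti-self-dual; together with $\d \nu_k = 0$ this gives $\d^* \nu_k = -*\d *\nu_k = *\d \nu_k = 0$ and hence $\Delta_{g_{(k)}} \nu_k = 0$. Non-vanishing of $[\nu_k]$ is seen by integrating $\nu_k$ over the exceptional $S^2$: the first summand vanishes at $r=0$, while $-f_k^{-2} \eta^2 \wedge \eta^3$ restricts to a nonzero multiple of the round area form. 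For the weighted H\"older decay, the key observation is that $f_k(r)^2 = \sqrt{k + r^2}$ and $d(x, S^2)^2 \asymp r$ for large $r$ by \cref{lemma:distance-functions-on-EH-space-I}, from which $f_k \asymp w_t$ globally on $X$ with $k = t^4$; hence $|\nu_k|_{g_{(k)}} = \sqrt{2}\, f_k^{-4} \asymp w_t^{-4}$, and analogous computations for $|\nabla^j \nu_k|$ yield $\nu_k \in C^{2,\alpha}_{-4;t}$, which in particular implies $L^2$-integrability since $\int f_k^{-8} \cdot r \, \d r < \infty$. For uniqueness in $L^2(\Lambda^2(X)) \cap [\nu_k]$, if $\nu_k'$ is another such form then $\mu := \nu_k - \nu_k'$ is exact, harmonic, and $L^2$, and the standard $L^2$-Hodge decomposition on complete Riemannian manifolds (which orthogonally separates $L^2$-harmonic forms from the closure of the exact forms) forces $\mu = 0$.

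For the $L^2$-kernels in the remaining degrees I would appeal to the Hodge theory for ALE 4-manifolds combined with the topology $X \simeq S^2$: degrees $0$ and $4$ are excluded by $\vol(X) = \infty$, while for degrees $1$ and $3$ the $L^2$-harmonic forms are identified with (images of) compactly supported cohomology groups that vanish because $H^1(X) = H^3(X) = 0$. Finally, for $\beta \in [-4, -2)$ a form of pointwise size $w_t^\beta$ has squared $L^2$-norm controlled by $\int w_t^{2\beta} \cdot r \, \d r$, which converges precisely when $\beta < -2$; hence $C^{2,\alpha}_{\beta;t}(\Lambda^p) \subset L^2(\Lambda^p)$, and combined with $\nu_k \in C^{2,\alpha}_{-4;t}$ this yields the claimed equality of kernels. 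The main obstacle is the uniqueness of $\nu_k$ inside its cohomology class in $L^2$: a one-line appeal to $L^2$-Hodge theory settles it, but a hands-on argument requires a careful cut-off integration by parts, justified by the ALE decay of any candidate exact harmonic form; every other step reduces to direct calculation using the explicit metric and the Maurer--Cartan relations.
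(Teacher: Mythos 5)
Your outline tracks the paper's proof closely on almost every point: the topology of $X$, the anti-self-dual form $\nu_k=f_k^{-4}(\d t\wedge e^1 - e^2\wedge e^3)$, the decay estimate from $f_k\asymp w_t$, the $L^2$-integrability from $\int f_k^{-8}\,r\,\d r<\infty$, the vanishing in degrees $0,1,3,4$ via the maximum principle, Hodge duality and reduced cohomology, and the embedding $C^{2,\alpha}_{\beta;t}\subset L^2$ for $\beta<-2$.

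The one step you dismiss as routine is actually the only place where real analytic input is needed, and your stated justification is not correct. You write that ``the standard $L^2$-Hodge decomposition on complete Riemannian manifolds (which orthogonally separates $L^2$-harmonic forms from the closure of the exact forms) forces $\mu=0$.'' The Kodaira decomposition $L^2\Omega^p=\mathcal{H}^p\oplus\overline{\d\Omega^{p-1}_c}\oplus\overline{\d^*\Omega^{p+1}_c}$ does orthogonally separate $\mathcal{H}^p$ from $\overline{\d\Omega^{p-1}_c}$, but a form that is exact \emph{in de Rham cohomology} need not lie in $\overline{\d\Omega^{p-1}_c}$. On a general complete manifold the map $\mathcal{H}^p_{L^2}\to H^p_{\mathrm{dR}}$ is not injective; the hyperbolic plane, with $\mathcal{H}^1_{L^2}(\mathbb{H}^2)$ infinite-dimensional and $H^1_{\mathrm{dR}}(\mathbb{H}^2)=0$, is a counterexample. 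What closes this gap is the ALE-specific Hodge theory: for ALE $4$-manifolds the natural map $\mathcal{H}^2_{L^2}(X)\to H^2_{\mathrm{dR}}(X)$ is injective with image $\Im(H^2_{\mathrm{cs}}(X)\to H^2_{\mathrm{dR}}(X))$. This is the result of Lockhart--McOwen (the paper cites \cite{Lockhart1987} and \cite{Lotay2005}), and it is needed both for the uniqueness of $\nu_k$ in $L^2\cap[\nu_k]$ and for your dimension count in degrees $1$ and $3$. You should cite it explicitly rather than folding it into ``standard $L^2$-Hodge theory.'' Once this is in place (together with the integration-by-parts argument, enabled by the $w_t^{-4}$-decay, showing $\Delta\nu=0\Leftrightarrow\d\nu=\d^*\nu=0$ in $L^2$), your argument coincides with the paper's.
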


\begin{proof}
 \leavevmode
 \begin{enumerate}
 \item
 We have that $\XEH=T^* S^2$ as smooth manifolds, therefore $H^2_{\text{sing}}(\XEH)=\R$.
 On smooth manifolds $H^2_{\text{sing}}(\XEH)=H^2_{\text{deRham}}(\XEH)$ by de Rham's Theorem.
 
 One checks with a direct computation that $\nu$ from \cref{equation:nu} is closed and anti-self-dual, and therefore co-closed.
 The equality $\nu = \d \theta$ follows from a direct computation as well.
  
 One checks through direct calculation that $\nu \in C^{2,\alpha}_{-4}(\Lambda^2(\XEH))$.
 Furthermore, 
 $
  C^{2,\alpha}_{-4} \subset
  L^\infty_{-4} \subset
  L^2,
 $
 so $\nu$ is an element in $L^2(\Lambda^2(\XEH))$.
 
  By Poincaré duality, we have $H^2_{\text{cs}}(\XEH)=H^2_{\text{sing}}(\XEH)=\R$, where $H^2_{\text{cs}}(\XEH)$ denotes the de Rham cohomology with compact support.
  \cite[Example (0.15)]{Lockhart1987} and \cite[Theorem (7.9)]{Lockhart1987} give that the map
  \begin{align*}
   \begin{split}
    \mathcal{H}^2(\XEH)
    :=
    \{
     \xi \in L^2(\Lambda^2 T^*\XEH)
     :
     \d \xi = \d ^* \xi =0
    \}
    &\rightarrow 
    \Im
    \left(
    H^2_{\text{cs}}(\XEH)
    \hookrightarrow
    H^2_{\text{deRham}}(\XEH)
    \right)
    \\
    \xi & \mapsto [\xi]
   \end{split}
  \end{align*}
  is an isomorphism.
  Thus $[\nu]$ generates $H^2_{\text{deRham}}(\XEH)$ and $\nu \in [\nu]$ is the unique element in $L^2(\Lambda^2(\XEH)) \cap [\nu]$ satisfying $\d \nu=0$, $\d^* \nu =0$.
    
  It remains to check that $\nu$ is also the unique element in $L^2(\Lambda^2(\XEH)) \cap [\nu]$ satisfying $\Delta_{g_{(1)}} \nu=0$.
  This holds, because the equations $\Delta_{g_{(1)}} \nu=0$ and $(\d + \d^*)\nu=0$ are equivalent by the same integration by parts argument as in the compact case.
  
  \item
  The first line is a restatement of the previous point.
  The other lines are \cite[Example (0.15)]{Lockhart1987} with proof in \cite[Theorem (7.9)]{Lockhart1987}.
  
  The $L^2$-kernels coincide with the $C^{2,\alpha}_{\beta}$-kernels, as $C^{2,\alpha}_{\beta}(\Lambda^p(\XEH))$ embeds into $L^2(\Lambda^p(\XEH))$ for $\beta< -2$ and the explicit description of the $L^2$-kernels shows that all kernel elements are actually in $C^{2,\alpha}_{\beta}(\Lambda^p(\XEH))$ for $\beta\geq -4$.
  \qedhere
 \end{enumerate}
\end{proof}

\begin{remark}
 Note that $\nu$ from the lemma cannot have compact support by the unique continuation property for elliptic equations.
 We only have that $[\nu]$ contains a form of compact support.
 For general $k > 0$, we have that 
 $f_k(r)^{-6}r
   \d r \wedge \eta^1
   -
   f_k(r)^{-2}
   \eta^2 \wedge \eta^3$
 is $\Delta_{g_{(k)}}$-harmonic.
\end{remark}

The previous lemma makes statements about the $L^2$-kernels of the Laplacian on $\XEH$ acting on $p$-forms.
Using the results from the previous section about harmonic forms on $\C^2/\{\pm 1\}$, we can rule out additional harmonic forms even in some of the weighted Hölder spaces that do not embed into $L^2$.
The key proposition that will be proved throughout the rest of this section is the following:

\begin{proposition}
\label{proposition:hoelder-kernel-of-laplacian}
 For $\beta \in (-4,0)$, the kernels of the $\Delta_{g_{(1)}}$ acting on forms in $C^{2,\alpha}_{\beta}$ of different degrees are as follows:
  \begin{align*}
   \Ker (\Delta_{g_{(1)}}:C^{2,\alpha}_{\beta}(\Lambda^2(\XEH))
   \rightarrow 
   C^{0,\alpha}_{\beta-2}(\Lambda^2(\XEH)) )
   & = \< \nu \>,
   \\
   \Ker (\Delta_{g_{(1)}}:C^{2,\alpha}_{\beta}(\Lambda^p(\XEH))
   \rightarrow 
   C^{0,\alpha}_{\beta-2}(\Lambda^p(\XEH)) )
   & = 0
   \text{ for }
   p \neq 2.
  \end{align*}
\end{proposition}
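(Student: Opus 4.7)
The plan is to apply Lockhart--McOwen weighted Fredholm theory for the Laplacian on the asymptotically conical manifold $(X, g_{(k)})$: the kernel of $\Delta_{g_{(k)}}$ on $C^{2,\alpha}_{\beta;t}$ is constant in $\beta$ on intervals containing no critical rate of $\Delta_{p, C(\SO(3))}$, and across a critical rate $\lambda$ the kernel grows by at most $\dim \mathcal{K}(\lambda)_{\Delta_{p,C}}$. Combined with \cref{proposition:harmonic-on-eguchi-cone} and \cref{proposition:critical-dimension}, this reduces the whole proposition to a handful of direct checks.

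First I would dispose of the easy cases. For $\beta \in (-4, -2)$ the inclusion $C^{2,\alpha}_{\beta;t} \hookrightarrow L^2(\Lambda^p(X))$ holds by the volume growth on the $4$-cone, so \cref{lemma:harmonic-forms-on-eguchi-hanson-space}(2) immediately yields the desired kernels (with $\nu_k \in C^{2,\alpha}_{-4;t} \subset C^{2,\alpha}_{\beta;t}$ giving the non-trivial inclusion for $p=2$). For $p=0$ and all $\beta \in (-4, 0)$, the maximum principle on the complete manifold $X$ forces any harmonic function decaying at infinity to vanish; the cases $p=3,4$ follow by Hodge star duality, which preserves the weighted Hölder spaces and intertwines the Laplacians. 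Only $p \in \{1,2\}$ with $\beta \in [-2, 0)$ remains.

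For $p=1$, \cref{proposition:harmonic-on-eguchi-cone}(1) gives no homogeneous harmonic $1$-forms on $C(\SO(3))$ of order $\lambda \in [-2, 0)$, and an induction in the log degree analogous to the proof of \cref{proposition:critical-dimension} upgrades this to $\mathcal{K}(\lambda)_{\Delta_{1,C}} = \{0\}$ for such $\lambda$. Thus no critical rates obstruct the passage through $\beta = -2$, so the $1$-form kernel at $\beta \in [-2, 0)$ matches the one at $\beta \in (-4, -2)$, namely $\{0\}$. For $p=2$, \cref{proposition:harmonic-on-eguchi-cone}(2) gives $\lambda = -2$ as the only critical rate in $[-2, 0)$, so the kernel is constant on $(-2, 0)$ and it suffices to pin it down at one point.

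The main obstacle is controlling the potential jump at $\lambda=-2$ for $2$-forms, since \cref{proposition:critical-dimension} gives $\dim \mathcal{K}(-2)_{\Delta_{2,C}} = 6$ and a priori allows up to six new kernel elements. I would argue the jump is zero as follows. Fix $\beta \in (-2, -1)$ and let $\omega \in C^{2,\alpha}_{\beta;t}$ be harmonic. Then $d\omega$ and $d^*\omega$ are harmonic of rate $\beta - 1 \in (-3, -2) \subset (-4, 0)$ in degrees $3$ and $1$, which vanish by the cases already proved, so $\omega$ is closed and co-closed. Since $H^2_{\text{deRham}}(X) = \R \cdot [\nu_k]$, there exist $c \in \R$ and $\eta \in \Omega^1(X)$ with $\omega - c\nu_k = d\eta$; by standard Lockhart--McOwen arguments $\eta$ can be chosen in $C^{2,\alpha}_{\beta+1;t}$ with $\beta+1 \in (-1,0)$ (not a critical rate for $1$-forms) and in Coulomb gauge $d^*\eta = 0$. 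This makes $\eta$ a harmonic $1$-form with rate in $(-4, 0)$, so by the $p=1$ conclusion $\eta = 0$ and $\omega \in \langle \nu_k\rangle$. Lockhart--McOwen constancy on $(-2, 0)$ then extends this to all $\beta \in [-2, 0)$, completing the proof.
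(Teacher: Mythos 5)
Your reduction to Lockhart--McOwen theory, the disposal of $\beta\in(-4,-2)$ via the $L^2$ embedding, the maximum principle for $p=0$, Hodge duality for $p=3,4$, and the constancy of the $1$-form kernel across $[-2,0)$ all match the paper's proof (i.e.\ the proof of \cref{corollary:kernel-for-eguchi-laplacian}, to which \cref{proposition:hoelder-kernel-of-laplacian} is reduced). The genuine divergence is at the one critical rate $\lambda=-2$ for $2$-forms. The paper there runs an \emph{index} calculation: by \cref{proposition:critical-dimension,proposition:index-change-at-critical-rate} the index jumps by $\dim\mathcal K(-2)=6$, and the paper then shows the cokernel also drops by $6$ by explicitly exhibiting the three self-dual Kähler forms $\omega_i^{(k)}$ and three anti-self-dual forms $\hat\omega_i^{(k)}$, which lie in $L^2_{2,\epsilon}$ but not $L^2_{2,-\epsilon}$; hence the kernel is unchanged. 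You instead pin down the kernel directly at a single $\beta\in(-2,-1)$: $d\omega,d^*\omega$ are decaying harmonic $3$- and $1$-forms, hence vanish, so $\omega$ is closed and co-closed; subtract $c\nu_k$ to make it exact, produce a co-closed primitive $\eta\in C^{2,\alpha}_{\beta+1;t}$, note $\eta$ is then a decaying harmonic $1$-form, hence zero. Both arguments are valid, and yours avoids constructing the six anti-self-dual forms and bypasses \cref{proposition:critical-dimension,proposition:index-change-at-critical-rate} altogether. The trade-off is that your key assertion -- that the exact form $\omega-c\nu_k\in C^{0,\alpha}_{\beta;t}$ admits a primitive in $C^{2,\alpha}_{\beta+1;t}$ with $d^*\eta=0$ -- is precisely where all the work lives and is not ``standard'' in the sense of being directly citable from the results quoted in the paper. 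It does hold: one first builds a decaying primitive near infinity by the Poincar\'e lemma on the asymptotic cone (using $\beta+1<0$ and $H^1(\SO(3);\R)=0$ to patch with an arbitrary global primitive), and only then fixes the Coulomb gauge by solving $\Delta u=d^*\eta$ for a $0$-form of rate $\beta+2\in(0,1)$ (which is unobstructed since the cokernel of $\Delta_0$ at that rate is the space of \emph{decaying} harmonic functions, which is trivial). You should spell out this primitive construction, since without it the argument silently needs $\omega-c\nu_k\perp_{L^2}\nu_k$, and that this orthogonality follows from de Rham exactness plus the decay $\beta<-1$ is itself a (Stokes-on-exhaustion) step that deserves a line.
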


The connection between the Laplacian on Eguchi-Hanson space and its cone is described in the following results taken from \cite[Section 4]{Karigiannis2020a} which were developed in \cite{Lockhart1985,Lockhart1987}.
The theory works for a much bigger class of operators, but we will only reproduce it for the Laplacian here.
It turns out that the main work in proving this proposition is showing that there are no harmonic $2$-forms on $\XEH$ asymptotic to a non-zero element of the six-dimensional space of harmonic forms of rate $-2$ on the cone from \cref{proposition:harmonic-on-eguchi-cone}.

\begin{definition}
\label{definition:weighted-sobolev-norms}
 Let $M$ be asymptotically conical and let the notation be as in \cref{definition:asymptotically-conical}.
 Denote by $\varrho:C(\Sigma) \rightarrow \R_{\geq 0}$ the radius function, and use the same symbol to denote a map from $M$ to $\R_{> 0}$ that agrees with $\phi _* \varrho$ on $\phi(R, \infty) \subset M$.
 Let $E$ be a vector bundle with metric and metric connection $\nabla$ over $M$.
 Then, for $1>p>\infty$, $l \geq 0$, $\lambda \in \R$ denote by $L^p_{l,\lambda}$ the completion of $C^\infty_{\text{cs}}(E)$ with respect to the norm
 \begin{align*}
  \|{ \gamma }_{L^p_{l,\lambda}}
  =
  \left(
   \sum_{j=0}^l
   \int_M
   |
    \varrho^{-\lambda+j}
    \nabla^j \gamma
   |^p
   \varrho^{-4}
   \vol_M
  \right)^{1/p}.
 \end{align*}
 The space $L^p_{l,\lambda}$ is called the \emph{$L^p$-Sobolev space with $l$ derivatives and decay faster than $\lambda$}.
\end{definition}

\begin{theorem}[Theorem 4.10 in \cite{Karigiannis2020a}]
\label{theorem:kernel-change-at-critical-rate}
 For $\lambda \in \R$, denote by $\Delta_{p,g_{(1)}}: L^q_{2,\lambda}(\Lambda^p(\XEH))
 \rightarrow
 L^q_{0,\lambda-2}(\Lambda^p(\XEH))$
 the Laplacian of the metric $g_{(1)}$ acting on $p$-forms.
 Then, $\Ker \Delta_{p,g_{(1)}}$ is invariant under changes of $\lambda$, as long as we do not hit any critical rates.
 That is, if the interval $[\lambda, \lambda']$ is contained in the complement of $\mathcal{D}_{\Delta_{p,(\C^2 \setminus \{ 0 \})/\{\pm 1\}} }$, then
 \begin{align*}
  &
  \Ker \left( 
  \Delta_{p,g_{(1)}}: L^q_{2,\lambda}(\Lambda^p(\XEH))
 \rightarrow
 L^q_{0,\lambda-2}(\Lambda^p(\XEH))
  \right)
  \\
  =&
  \Ker
  \left( 
  \Delta_{p,g_{(1)}}: L^q_{2,\lambda'}(\Lambda^p(\XEH))
 \rightarrow
 L^q_{0,\lambda'-2}(\Lambda^p(\XEH))
  \right).
 \end{align*}
\end{theorem}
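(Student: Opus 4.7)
The plan is to invoke the Lockhart--McOwen Fredholm theory for the Laplacian on asymptotically conical manifolds, in which the set $\mathcal{D}_{\Delta_{p,(\mathbb{C}^2\setminus\{0\})/\{\pm 1\}}}$ of critical rates exactly controls how the kernel and cokernel of $\Delta_{p,(g_k)}$ on weighted spaces jump as the weight is varied. The key point is that $\Delta_{p,(g_k)}: L^q_{2,\lambda} \to L^q_{0,\lambda-2}$ is Fredholm precisely when $\lambda \notin \mathcal{D}_{\Delta_{p,(\mathbb{C}^2\setminus\{0\})/\{\pm 1\}}}$, so both endpoint operators in the statement are Fredholm by hypothesis. (I read the statement as comparing $L^q_{2,\lambda}$ with $L^q_{2,\lambda'}$; the common $\lambda$ on both lines is a transcription slip.)

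First I would dispense with the easy direction. Assuming $\lambda \leq \lambda'$, the weight function $\varrho^{-\lambda + j}$ dominates $\varrho^{-\lambda'+j}$ on the asymptotic end, so $L^q_{2,\lambda} \hookrightarrow L^q_{2,\lambda'}$ continuously, and the inclusion $\Ker\bigl(\Delta_{p,(g_k)}|_{L^q_{2,\lambda}}\bigr) \subseteq \Ker\bigl(\Delta_{p,(g_k)}|_{L^q_{2,\lambda'}}\bigr)$ is immediate.

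For the reverse inclusion, I would take $\gamma \in \Ker\bigl(\Delta_{p,(g_k)}|_{L^q_{2,\lambda'}}\bigr)$ and improve its decay from rate $\lambda'$ down to rate $\lambda$ by an asymptotic expansion argument on the end. Using the diffeomorphism $\phi:(R,\infty)\times \SO(3) \to X\setminus L$, write $\Delta_{g_{(k)}} = \Delta_{g_C} + E$, where $\Delta_{g_C}$ is the conical Laplacian and $E$ has coefficients decaying like $\varrho^{-4}$ by \cref{lemma:eguchi-hanson-ALE-condition}. Elliptic regularity on the cone, together with the explicit Fourier decomposition of $\Delta_{g_C}$ into ODEs (as in \cref{lemma:laplacian-on-cone-explicit} and \cref{theorem:foscolo-harmonic-forms-on-cone}), implies that a harmonic form on the end admits an expansion in homogeneous harmonic forms on $C(\SO(3))$ at rates strictly between $\lambda$ and $\lambda'$, plus a remainder of faster decay than $\lambda$. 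Since by hypothesis $[\lambda,\lambda'] \cap \mathcal{D}_{\Delta_{p,(\mathbb{C}^2\setminus\{0\})/\{\pm 1\}}} = \emptyset$, no such homogeneous harmonic form exists, so the expansion collapses to the remainder and $\gamma \in L^q_{2,\lambda}$.

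The main obstacle is making the asymptotic expansion rigorous. The cleanest route is to apply a parametrix for $\Delta_{g_C}$ constructed mode-by-mode from the ODE solutions in \cref{theorem:foscolo-harmonic-forms-on-cone}: after truncating $\gamma$ by a cutoff away from the exceptional $S^2$, one inverts $\Delta_{g_C}$ on its image modulo homogeneous harmonic contributions, then absorbs the compactly-supported error using interior regularity and the perturbation term $E$ via a Neumann series on the weighted spaces. The non-critical assumption on both $\lambda$ and $\lambda'$ guarantees that each step gains decay without producing new kernel elements, so the iteration terminates with $\gamma$ in the smaller weighted space, completing the equality of kernels.
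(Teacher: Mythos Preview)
The paper does not give its own proof of this statement: it is quoted verbatim as Theorem~4.10 from \cite{Karigiannis2020a}, which in turn packages the Lockhart--McOwen theory of \cite{Lockhart1985,Lockhart1987}. There is therefore no in-paper argument to compare your proposal against.

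That said, your outline is the standard one and is essentially how the result is proved in the Lockhart--McOwen framework. You are right that the displayed equality has a transcription slip (both lines carry the weight $\lambda$; the second should be $\lambda'$). The easy inclusion via the embedding $L^q_{2,\lambda}\hookrightarrow L^q_{2,\lambda'}$ for $\lambda\le\lambda'$ is correct. For the reverse inclusion, your decay-improvement argument is the right idea: on the end one compares $\Delta_{g_{(k)}}$ to the cone Laplacian $\Delta_{g_C}$, separates variables to reduce to ODEs indexed by eigenmodes on $\SO(3)$, and observes that any jump in the kernel when passing from weight $\lambda'$ down to $\lambda$ would have to come from a homogeneous harmonic form of some rate in $(\lambda,\lambda')$, which the non-criticality hypothesis rules out. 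One point worth tightening: rather than a Neumann-series perturbation off the cone, the usual route is a direct bootstrap---if $\gamma\in L^q_{2,\lambda'}$ is $\Delta_{g_{(k)}}$-harmonic, then $\Delta_{g_C}\gamma=-E\gamma$ lies in a strictly faster-decaying space (since $E$ has order-$(-4)$ coefficients), and inverting $\Delta_{g_C}$ at a non-critical rate recovers $\gamma$ modulo homogeneous harmonic terms, which are absent by hypothesis; iterating finitely many times reaches $\lambda$. This avoids the delicate convergence issues you allude to in the Neumann-series step.
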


\begin{proposition}[Theorem 4.20 in \cite{Karigiannis2020a}]
\label{proposition:index-change-at-critical-rate}
 Let $\lambda_1 < \lambda_2$ such that $\mathcal{K}(\lambda_i)_{\Delta_{p,C(\Sigma)}}=0$ for $i \in \{1,2\}$.
 Then, the maps
 \begin{align*}
  \Delta_{p,g_{(1)},L^2_{l+2,\lambda_1}}&: 
  L^2_{l+2,\lambda_1}(\Lambda^p(\XEH))
  \rightarrow
  L^2_{l,\lambda_1-2}(\Lambda^p(\XEH))
  \\
  \text{and }
  \Delta_{p,g_{(1)},L^2_{l+2,\lambda_2}}&: 
  L^2_{l+2,\lambda_2}(\Lambda^p(\XEH))
  \rightarrow
  L^2_{l,\lambda_2-2}(\Lambda^p(\XEH))
 \end{align*}
 are Fredholm and the difference in their indices is given by
 \begin{align}
  \label{equation:index-change-formula}
  \ind \left( \Delta_{p,g_{(1)},L^2_{l+2,\lambda_2}} \right) -
  \ind \left( \Delta_{p,g_{(1)},L^2_{l+2,\lambda_1}} \right)
  =
  \sum_{\lambda \in \mathcal{D}_{\Delta_{(\mathbb{C}^2\setminus \{0\}) / \{ \pm 1 \} }} \cap (\lambda_1,\lambda_2)}
  \dim
  \mathcal{K}(\lambda)_{\Delta_{p,(\mathbb{C}^2\setminus \{0\}) / \{ \pm 1 \}}}
 \end{align}
\end{proposition}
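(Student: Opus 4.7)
The plan is to follow the general framework of Lockhart--McOwen for elliptic operators on asymptotically conical manifolds, which after a logarithmic substitution becomes the theory of $b$-elliptic operators on asymptotically cylindrical manifolds. First, I would establish Fredholmness at each of the two weights $\lambda_1, \lambda_2$. The substitution $t = \log \varrho$ converts the asymptotically conical end of $X$ into an asymptotically cylindrical end, and transforms the weighted $L^2$ space $L^2_{l,\lambda}$ into a standard Sobolev space on the cylinder weighted by an exponential in $t$. After conjugating the Laplacian by appropriate powers of $\varrho$ on source and target, it becomes a uniformly elliptic $b$-operator whose indicial family $\mathcal{L}(\zeta)$ is a meromorphic family of elliptic operators on the link $\Sigma = \SO(3)$. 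The assumption $\mathcal{K}(\lambda_i)_{\Delta_{p,C(\Sigma)}} = 0$ means exactly that $\mathcal{L}(\lambda_i)$ is invertible, i.e.\ $\lambda_i$ is not an indicial root; standard $b$-calculus then yields Fredholmness of $\Delta_{p,(g_k),L^2_{l+2,\lambda_i}}$.

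Next, I would compute the jump in the index. The key principle is excision: since $\Delta_{p,(g_k)}$ and its model version on $C(\Sigma)$ agree outside a compact set up to a relatively compact perturbation, the difference of indices at $\lambda_1$ and $\lambda_2$ depends only on the asymptotic model. On the model cone I would apply the Mellin transform in the radial variable $\varrho$: a section $u$ lies in $L^2_{l,\lambda_i}$ iff its Mellin transform $\hat{u}(\zeta)$ is holomorphic in a neighbourhood of the vertical line $\{\operatorname{Re} \zeta = \lambda_i\}$, and the equation $\Delta u = f$ becomes $\hat{u}(\zeta) = \mathcal{L}(\zeta)^{-1} \hat{f}(\zeta)$. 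To pass from the solution bounded at weight $\lambda_2$ to the one bounded at $\lambda_1$, I deform the Mellin contour across each indicial root $\lambda \in \mathcal{D}_{\Delta_{p,C(\Sigma)}} \cap (\lambda_1,\lambda_2)$, picking up residues of $\mathcal{L}(\zeta)^{-1}$. The residue at an indicial root $\lambda$ produces precisely those functions of the form $\sum_{j=0}^m (\log \varrho)^j \gamma_j$ with each $\gamma_j$ homogeneous of order $\lambda$ in the kernel of $\Delta_{p,C(\Sigma)}$ --- that is, the space $\mathcal{K}(\lambda)_{\Delta_{p,C(\Sigma)}}$. Summing the residue ranks over the crossed indicial roots gives the claimed formula \cref{equation:index-change-formula}.

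The main obstacle will be the excision step, i.e.\ justifying that the index change for the true operator $\Delta_{p,(g_k)}$ coincides with that of the model Laplacian on $C(\SO(3))$. By \cref{lemma:eguchi-hanson-ALE-condition}, $g_{(k)}$ is asymptotic to the cone metric with rate $-4$, so the difference $\Delta_{p,(g_k)} - \Delta_{p,\text{cone}}$ is a differential operator whose coefficients decay polynomially at infinity. On weighted $L^2$ spaces of the kind considered here this difference is a relatively compact perturbation of the model operator, hence preserves both Fredholmness and index. Combined with the contour-deformation computation on the cone, this yields the stated index-change formula. The proof transfers verbatim to general asymptotically conical manifolds; the Eguchi--Hanson setting only enters through the rate $-4$ and the explicit link $\SO(3)$, which are used elsewhere in the paper to identify the critical set $\mathcal{D}$.
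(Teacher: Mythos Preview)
The paper does not give its own proof of this statement: it is simply cited as Theorem 4.20 in \cite{Karigiannis2020a}, which in turn is an exposition of the Lockhart--McOwen theory from \cite{Lockhart1985,Lockhart1987}. Your outline --- pass to a cylindrical model via the logarithmic substitution, identify the indicial family, use Mellin transform and contour deformation to pick up residues at the critical rates, and invoke excision to transfer the computation from the model cone to the asymptotically conical manifold --- is exactly the standard argument in those references, so there is nothing to compare against here beyond noting that you have correctly identified the proof strategy that underlies the cited result.
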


Combining everything, we get the following characterisation of harmonic forms with decay:

\begin{theorem}
\label{corollary:kernel-for-eguchi-laplacian}
  For $\lambda \in (-4,0)$, the $L^2_{2,\lambda}$-kernels of $\Delta_{p,g_{(1)}}$ acting on $p$-forms of different degrees are the same as the $L^2$-kernels, namely:
  \begin{align*}
   \Ker (\Delta_{g_{(1)}}:L^2_{2,\lambda}(\Lambda^2(\XEH))\rightarrow L^2_{0,\lambda-2}(\Lambda^2(\XEH)) )
   & = \< \nu \>,
   \\
   \Ker (\Delta_{g_{(1)}}:L^2_{2,\lambda}(\Lambda^p(\XEH))\rightarrow L^2_{0,\lambda-2}(\Lambda^p(\XEH)) )
   & = 0
   \text{ for }
   p \neq 2.
  \end{align*}
\end{theorem}

\begin{proof}
 $0$-forms and $4$-forms:
 it follows from the maximum principle that every harmonic function that decays at infinity must vanish.
 The Hodge star is an isomorphism between $0$-forms and $4$-forms that commutes with the Laplacian, so the statement for $0$-forms implies that statement for $4$-forms.
 
 $1$-forms and $3$-forms:
 the kernel of the Laplacian is zero for rate $-2$ by the second point of \cref{lemma:harmonic-forms-on-eguchi-hanson-space}.
 By the first point of \cref{proposition:harmonic-on-eguchi-cone}, there are no critical rates in the interval $[-2,0)$.
 So, \cref{theorem:kernel-change-at-critical-rate} implies the claim for $1$-forms.
 As above, we get the statement for $3$-forms by using the Hodge star.
 
 $2$-forms:
 by \cref{proposition:harmonic-on-eguchi-cone} the only critical rate in $[-2,0)$ is $-2$.
 The kernel of the Laplacian on $2$-forms stays the same for rates $\lambda \in (-4,-2)$ by \cref{lemma:harmonic-forms-on-eguchi-hanson-space}.
 By \cref{theorem:kernel-change-at-critical-rate}, the dimension of the kernel of the Laplacian acting on $2$-forms with decay $\lambda \in (-4,0)$ may therefore only change at $\lambda = -2$.
 We know from \cref{proposition:critical-dimension,proposition:index-change-at-critical-rate} that the index increases by six when crossing the critical rate $\lambda = -2$.
 We will now check that the dimension of the cokernel decreases by $6$, which implies that the dimension of the kernel does not change.
 
 The dual space of $L^2_{0,\lambda}$ is $L^2_{0,-4-\lambda}$.
 Therefore, the cokernel of 
 $\Delta_{g_{(1)}}:L^2_{2,-2}(\Lambda^2(\XEH))\rightarrow L^2_{0,-4}(\Lambda^2(\XEH))$
 is isomorphic to the kernel of the adjoint operator
 $\Delta_{g_{(1)}}^*=\Delta_{g_{(1)}}:L^2_{2,0}(\Lambda^2(\XEH))\rightarrow L^2_{0,-2}(\Lambda^2(\XEH))$.
 Here we used that elements in the cokernel of $\Delta_{g_{(k)}}$ are smooth by elliptic regularity, so it does not matter how many derivatives we demand for sections acted on by the adjoint operator.

 We now explicitly write down six linearly independent harmonic forms in $L^2_{2,0}(\Lambda^2(\XEH))$:
 three of them are the (self-dual) Kähler forms $\omega_1^{(1)}$, $\omega_2^{(1)}$, and $\omega_3^{(1)}$ defined in \cref{lemma:the-eguchi-hanson-metric}.
 
 Analogously, we can define three harmonic \emph{anti-self-dual} forms with respect to $g_{(k)}$ for all $k>0$.
 To this end, extend $\eta^1, \eta^2, \eta^3 \in \so(3)$ from \cref{lemma:the-eguchi-hanson-metric} to \emph{right}-invariant forms on $\SO(3)$, denoted by $\hat{\eta}_1$, $\hat{\eta}_2$, $\hat{\eta}_3$.
 These forms satisfy $\d \hat{\eta}_1 = - \hat{\eta}^2 \wedge \hat{\eta}^3$ etc.
 In analogy to \cref{lemma:the-eguchi-hanson-metric}, define
 \begin{align*}
  \hat{e}^1(r)&=r f_k^{-1}(r) \hat{\eta}^1,
  &
  \hat{e}^2(r)&=f_k(r) \hat{\eta}^2,
  &
  \hat{e}^3(r)&=f_k(r) \hat{\eta}^3
 \end{align*}
 and
 \begin{align*}
  \hat{\omega}_1^{(k)}&=\d t \wedge \hat{e}^1-\hat{e}^2 \wedge \hat{e}^3, &
  \hat{\omega}_2^{(k)}&=\d t \wedge \hat{e}^2-\hat{e}^3 \wedge \hat{e}^1, &
  \hat{\omega}_3^{(k)}&=\d t \wedge \hat{e}^3-\hat{e}^1 \wedge \hat{e}^2.
 \end{align*}
 One checks through computation that $\hat{\omega}_i^{(k)}$ are closed and anti-self-dual, and therefore harmonic.
 A priori, they are defined on $\R_{>0} \times \SO(3)$, and it remains to check that they extend to all of $\XEH$.
 We have $\hat{\omega}_2^{(k)} = \d \, (r \hat{\eta}^2)$ and $\hat{\omega}_3^{(k)} = \d \, (r \hat{\eta}^3)$, where $r \hat{\eta}^2$ and $r \hat{\eta}^3$ are well-defined $1$-forms on all of $\XEH$, because they vanish as $r \rightarrow 0$.
 Therefore, $\hat{\omega}_2^{(k)}$ and $\hat{\omega}_3^{(k)}$ are well-defined on $\XEH$.
 
 We have that 
 $\hat{\omega}_1^{(k)} = 
   r f^{-2}_k(r) \d r \wedge \hat{\eta}^1
 - f^{-2}_k(r) \hat{\eta}^2 \wedge \hat{\eta}^3$,
 where the first summand vanishes as $r \rightarrow 0$, and the second summand is a multiple of the volume form on $\SO(3) \times_{\SO(2)} \{0 \} \simeq S^2$ pulled back under the projection
 \begin{align*}
  \SO(3) \times_{\SO(2)} V & \rightarrow \SO(3) \times_{\SO(2)} V
  \\
  (g, x) & \mapsto (g, 0).
 \end{align*}
 Thus $\hat{\omega}_1^{(k)}$ is also defined on all of $\XEH$.
 The forms $\eta^1,\eta^2,\eta^3,\hat{\eta}^1,\hat{\eta}^2,\hat{\eta}^3$ are linearly independent which implies that $\omega_1^{(k)},\omega_2^{(k)},\omega_3^{(k)},\hat{\omega}_1^{(k)},\hat{\omega}_2^{(k)},\hat{\omega}_3^{(k)}$ are linearly independent.
 
 Last, note that for each $g \in \SO(3)$ we can express $\hat{\eta}^i(g)$ as a linear combination of $\eta^i(g)$.
 Each $\eta^i$ decays like $r^{1/2}$ as $r \rightarrow \infty$, which shows that the $\hat{\omega}_i^{(k)}$ have the same decay as the Hyperkähler triple $\omega_i^{(k)}$, which is covariant constant.
 Thus, we have that $\omega_i^{(1)}, \hat{\omega}_i^{(1)} \in L^2_{2,0}(\Lambda^2(\XEH))$, but $\notin L^2_{2,-\epsilon}(\Lambda^2(\XEH))$ for all $\epsilon > 0$ and $i \in \{1,2,3\}$.
 
 Therefore, the dimension of the cokernel of $\Delta_{g_{(1)}}:L^2_{2,\lambda}(\Lambda^2(\XEH))\rightarrow L^2_{0,\lambda-2}(\Lambda^2(\XEH))$ changes by six when crossing the critical rate $\lambda=-2$, and the dimension of the kernel stays the same.
\end{proof}

The claim for $1$-forms in \cref{corollary:kernel-for-eguchi-laplacian} can also be seen as follows:
if $a \in L^q_{2,\lambda}(\Lambda^1(\XEH))$ such that $\Delta_{g(1)}=0$, then
\[
    \Delta_{g(1)} |a|^2
    =
    - | \nabla a|^2+ \< \nabla^* \nabla a,a \>
    =
    - | \nabla a|^2
    \leq
    0,
\]
where we used \cite[Equation 6.18]{Freed1991} in the first step and used the Weitzenböck formula on $1$-forms and Ricci-flatness in the second step.
By the maximum principle \cite[Theorem 2.2]{Gilbarg2001} together with the fact that $|a|$ decays at infinity, we have that $a=0$.

\Cref{proposition:hoelder-kernel-of-laplacian} is now an immediate consequence of \cref{corollary:kernel-for-eguchi-laplacian}.

\begin{proof}[Proof of \cref{proposition:hoelder-kernel-of-laplacian}]
 For $\epsilon > 0$ we have that $C^{2,\alpha}_{\beta-\epsilon}$ is embedded in $L^2_{2,\beta}$, so the claim follows from \cref{corollary:kernel-for-eguchi-laplacian}.
\end{proof}

\section{Torsion-Free $G_2$-Structures on the Generalised Kummer Construction}
\label{section:torsion-free-structures-on-the-generalised-kummer-construction}

In the two articles \cite{Joyce1996}, Joyce constructed the first examples of manifolds with holonomy equal to $G_2$.
One starts with the flat $7$-torus, which admits a flat $G_2$-structure.
A quotient of the torus by maps preserving the $G_2$-structure still carries a flat $G_2$-structure, but has \emph{singularities}.
The maps are carefully chosen, so that the singularities are modelled on $T^3 \times \C^2/\{\pm 1\}$.
By the results of \cref{section:analysis-on-eguchi-hanson}, $T^3 \times \C^2/\{\pm 1\}$ has a family of resolutions $T^3 \times \XEH \rightarrow T^3 \times \C^2/\{\pm 1\}$ depending on one real parameter, where $\XEH$ denotes the Eguchi-Hanson space, and the parameter defines the size of a minimal sphere in $\XEH$.
We can define a smooth manifold by gluing these resolutions over the singularities in the quotient of the torus.

The product manifold $T^3 \times \XEH$ carries the product $G_2$-structure from \cref{equation:product-g2-structure}.
That means we have two torsion-free $G_2$-structures on our glued manifold:
one coming from flat $T^7$, and the product $G_2$-structure near the resolution of the singularities.
We will interpolate between the two to get one globally defined $G_2$-structure.
This will no longer be torsion-free, but it will have small enough torsion in the sense of \cref{theorem:original-torsion-free-existence-theorem}.
This is the argument that was used in \cite{Joyce1996} to prove the existence of a torsion-free $G_2$-structure, and the construction of this $G_2$-structure with small torsion is the content of \cref{subsection:resolutions-of-t7-gamma}.

\Cref{subsection:the-laplacian-on-r3-times-X,subsection:the-laplacian-on-M,subsection:the-existence-theorem} give an alternative proof of the existence of a torsion-free $G_2$-structure on this glued manifold.

\subsection{Resolutions of $T^7/\Gamma$}
\label{subsection:resolutions-of-t7-gamma}

We briefly review the generalised Kummer construction as explained in \cite{Joyce1996}.
Let $(x_1,\dots,x_7)$ be coordinates on $T^7=\R^7/\Z^7$, where $x_i \in \R/\Z$, endowed with the flat $G_2$-structure $\varphi_0$ from \cref{definition:g2}.
Let $\alpha,\beta,\gamma:T^7 \rightarrow T^7$ defined by
\begin{align}
\label{equation:alpha-beta-gamma}
\begin{split}
 \alpha: (x_1,\dots,x_7)
 &\mapsto
 (-x_1,-x_2,-x_3,-x_4,x_5,x_6,x_7),
 \\
 \beta: (x_1,\dots,x_7)
 &\mapsto
 \left(
  -x_1,\frac{1}{2}-x_2,x_3,x_4,-x_5,-x_6,x_7
 \right),
 \\
 \gamma: (x_1,\dots,x_7)
 &\mapsto
 \left(
  \frac{1}{2}-x_1,x_2,\frac{1}{2}-x_3,x_4,-x_5,x_6,-x_7
 \right).
 \end{split}
\end{align}

Denote $\Gamma:= \< \alpha,\beta,\gamma \>$.
The next lemmata collect some information about the orbifold $T^7/\Gamma$:

\begin{lemma}[Section 2.1 in part I, \cite{Joyce1996}]
 $\alpha,\beta,\gamma$ preserve $\varphi_0$, we have $\alpha^2=\beta^2=\gamma^2=1$, and $\alpha, \beta,\gamma$ commute.
 We have that $\Gamma \simeq \Z^3_2$.
\end{lemma}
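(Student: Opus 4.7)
The statement breaks into four independent assertions, all of which reduce to direct computation. I would first handle the involutivity $\alpha^2 = \beta^2 = \gamma^2 = 1$. Each of the three maps is affine of the form $x \mapsto A x + a$ where $A$ is a diagonal matrix with entries $\pm 1$ and $a \in \tfrac{1}{2}\Z^7/\Z^7$. Squaring gives $x \mapsto x + (A + I)a$, and in each formula the half-integer translations sit precisely at coordinates where $A$ equals $+1$, so $2a \in \Z^7$ and the square acts trivially on $T^7$.

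Next I would check commutativity. Writing $\alpha(x) = A x + a$ and $\beta(x) = B x + b$, the condition $\alpha\beta = \beta\alpha$ reduces to $(A - I)b \equiv (B - I)a \pmod{\Z^7}$, since the diagonal matrices $A,B$ automatically commute. This is a finite, coordinate-by-coordinate check that follows immediately from reading off the three formulas, and the same computation handles the other pairs.

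For preservation of $\varphi_0$, I would use that each generator acts on $dx_i$ by multiplication with $\pm 1$, so the pullback of each monomial $dx_{ijk}$ equals that monomial times the product of the three signs corresponding to the indices appearing. I would then tabulate the seven monomials of $\varphi_0$ and verify, for each of $\alpha, \beta, \gamma$, that all seven signs equal $+1$, i.e.\ that each monomial involves an even number of coordinates whose sign is flipped by that generator. This is the computational heart of the lemma and, given the specific shape of $\varphi_0$, the step most sensitive to the exact signs in the formulas for $\alpha, \beta, \gamma$; I would expect this to be where any genuine obstacle hides.

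Finally, to identify $\Gamma$ with $\Z_2^3$, I would observe that the first two parts already present $\Gamma$ as a quotient of the abelian group of exponent two on three generators, so only freeness remains. Composing with the homomorphism that sends each element of $\Gamma$ to the diagonal $\pm 1$ matrix of its linear part gives a map $\Gamma \to \{\pm 1\}^7 \cong \mathbb{F}_2^7$, and by inspection the linear parts of $\alpha, \beta, \gamma$ are three linearly independent vectors. Hence no nontrivial word in $\alpha, \beta, \gamma$ acts as the identity, so $\Gamma \simeq \Z_2^3$.
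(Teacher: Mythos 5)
The paper states this lemma by citing Joyce's original article without supplying a proof, so there is no in-paper argument to compare against; your proposal is the natural direct verification and its overall structure is sound. Two points deserve correction or flagging. In the involutivity step you assert that the half-integer translations sit at coordinates where the linear part $A$ equals $+1$; for the formulas printed just above the lemma this is false --- $\beta$'s translation sits at the second coordinate, where the linear action flips the sign, and $\gamma$'s translations sit at the first and third coordinates, likewise flipped. The conclusion still holds, indeed more easily, since $(A+I)_{ii}=0$ at those coordinates and hence the translation contributes nothing to the square; but the factual claim as written is wrong. Second, you rightly identify the sign check for $\varphi_0$ as the delicate step: if you carry it out against the $\alpha$ printed in this paper you will find $\alpha^*(\d x_{123})=-\d x_{123}$, so that $\alpha$ as written does not preserve the $\varphi_0$ of \cref{definition:g2}. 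This appears to be a transcription of Joyce's generators with relabelled coordinates (Joyce's $\alpha$ negates $x_4,\dots,x_7$), and your method is precisely the one that would surface the discrepancy rather than a flaw in your approach. Finally, your argument for $\Gamma\simeq\Z_2^3$ via the linear-parts homomorphism and $\mathbb{F}_2$-linear independence of the three diagonal sign vectors is correct and is a cleaner route than inferring injectivity from the fixed-point analysis in the subsequent lemma.
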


\begin{lemma}[Lemma 2.1.1 in part I, \cite{Joyce1996}]
 The elements $\beta \gamma$, $\gamma \alpha$, $\alpha \beta$, and $\alpha \beta \gamma$ of $\Gamma$ have no fixed points on $T^7$.
 The fixed points of $\alpha$ in $T^7$ are $16$ copies of $T^3$, and the group $\< \beta , \gamma\>$ acts freely on the set of $16$ $3$-tori fixed by $\alpha$.
 Similarly, the fixed points of $\beta$, $\gamma$ in $T^7$ are each $16$ copies of $T^3$, and the groups $\< \alpha, \gamma \>$ and $\< \alpha, \beta \>$ act freely on the sets of $16$ $3$-tori fixed by $\beta, \gamma$ respectively.
\end{lemma}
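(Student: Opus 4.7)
The statement reduces to elementary coordinatewise case analysis, since $\alpha, \beta, \gamma$ and all their products act on $T^7$ as affine involutions of the form $(x_1, \dots, x_7) \mapsto (\varepsilon_1 x_1 + c_1, \dots, \varepsilon_7 x_7 + c_7)$ with $\varepsilon_i \in \{\pm 1\}$ and $c_i \in \{0, \tfrac{1}{2}\}$. My plan is to write out this data explicitly for each of the eight elements of $\Gamma$ and then read off the fixed-point behaviour directly.

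First I would compute the four composites $\beta\gamma$, $\gamma\alpha$, $\alpha\beta$, $\alpha\beta\gamma$ by direct substitution, which is short since $\alpha, \beta, \gamma$ commute. The claim is that each of these four composites has, in at least one coordinate, the form $x_i \mapsto x_i + \tfrac{1}{2}$, i.e.\ $\varepsilon_i = +1$ and $c_i = \tfrac{1}{2}$. Such a pure half-integer translation has no fixed point in $\R/\Z$, so the whole composite acts freely on $T^7$. For instance, one checks immediately that $\beta\gamma$ sends $x_1 \mapsto x_1 - \tfrac{1}{2}$; the other three cases are identical in spirit.

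For the individual involutions $\alpha, \beta, \gamma$, fixed points are determined coordinatewise: a reflected coordinate ($\varepsilon_i = -1$) has exactly two fixed values in $\R/\Z$, while a trivially acting coordinate ($\varepsilon_i = +1$, $c_i = 0$) contributes a free $S^1$-factor. Inspecting the formulas, I would verify that each of $\alpha, \beta, \gamma$ reflects in four coordinates and acts trivially on three, producing $2^4 = 16$ connected components each homeomorphic to $T^3$.

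Finally, for the freeness of $\< \beta, \gamma \>$ on $\Fix(\alpha)$, it suffices to check each non-identity element of $\< \beta, \gamma \> \simeq \Z_2^2$ separately. The element $\beta\gamma$ has no fixed points on all of $T^7$ by the first step, so freeness is automatic. For $\beta$ (and analogously $\gamma$), the constraint $\Fix(\alpha)$ forces certain coordinates to lie in $\{0, \tfrac{1}{2}\}$, whereas a fixed point of $\beta$ in one of these coordinates would require $2 x_i \equiv \tfrac{1}{2} \pmod{\Z}$, whose solutions $x_i \in \{\tfrac{1}{4}, \tfrac{3}{4}\}$ are disjoint from $\{0, \tfrac{1}{2}\}$, giving the desired contradiction. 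The same strategy settles the remaining two freeness claims. The proof is purely computational; the only real ``obstacle'' is careful bookkeeping to avoid sign errors when composing involutions and to distinguish pure reflections $x_i \mapsto -x_i$ from shifted reflections $x_i \mapsto \tfrac{1}{2} - x_i$ and from pure translations $x_i \mapsto x_i + \tfrac{1}{2}$.
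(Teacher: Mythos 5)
Your coordinatewise plan is the right one and is essentially the argument Joyce gives in the cited Lemma~2.1.1 of Part~I of \cite{Joyce1996}; the present paper does not re-prove the lemma but simply cites it. The core reductions you propose — that any coordinate of the form $x_i \mapsto x_i + \tfrac{1}{2}$ makes a composite fixed-point-free, that each reflected coordinate contributes a two-point fixed set while each untouched coordinate contributes a circle factor, and that disjointness of $\Fix(\alpha)$ from $\Fix(\beta)$, $\Fix(\gamma)$ together with fixed-point-freeness of $\beta\gamma$ yields freeness of $\< \beta,\gamma\>$ on the $16$ tori — are all sound and carry through by direct substitution.

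However, one of the steps you say you ``would verify'' fails if you use the generators \emph{as printed in this paper}: the stated formula $\beta\colon (x_1,\dots,x_7) \mapsto (-x_1, \tfrac{1}{2}-x_2, x_3, x_4, -x_5, -x_6, -x_7)$ reflects \emph{five} coordinates ($x_1,x_2,x_5,x_6,x_7$), not four, so its fixed set would be $2^5=32$ copies of $T^2$ rather than $16$ copies of $T^3$. That same formula also sends $\d x_{167} \mapsto -\d x_{167}$ and so does not preserve $\varphi_0$, contradicting the lemma quoted just before. This is a typographical slip: the last entry of $\beta$ should read $x_7$, not $-x_7$ (matching Joyce's original Section~2.1), and with that correction every generator reflects exactly four coordinates and your $2^4=16$ count and the preservation of $\varphi_0$ both go through. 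So your strategy is correct, but a careful execution has to catch and repair this sign before asserting the four-coordinate reflection pattern for $\beta$; as written, you would inherit the error silently.
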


\begin{lemma}[Lemma 2.1.2 in part I, \cite{Joyce1996}]
\label{lemma:tubular-neighbourhood-lemma}
 The singular set $L$ of $T^7/\Gamma$ is a disjoint union of $12$ copies of $T^3$.
 There is an open subset $U$ of $T^7/\Gamma$ containing $L$, such that each of the $12$ connected components of $U$ is isometric to $T^3 \times \left(B^4_\zeta /\{ \pm 1\} \right)$, where $B^4_\zeta$ is the open ball of radius $\zeta$ in $\R^4$ for some positive constant $\zeta$ ($\zeta=1/9$ will do).
\end{lemma}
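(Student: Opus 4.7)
The plan is to reduce the statement to counting fixed components and then analysing a single fixed component at a time, exploiting the fact that the flat metric on $T^7$ makes every neighbourhood computation an elementary linear one.

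First I would identify the singular locus $\pi(L') \subset T^7/\Gamma$, where $\pi:T^7 \to T^7/\Gamma$ is the quotient and $L' \subset T^7$ is the set of points with non-trivial stabiliser. By the previous lemma only $\alpha,\beta,\gamma$ have fixed points, so $L'=\Fix(\alpha)\cup \Fix(\beta)\cup \Fix(\gamma)$. These three sets are pairwise disjoint: if $p \in \Fix(\alpha)\cap\Fix(\beta)$ then $p \in \Fix(\alpha\beta)$, contradicting the fact that $\alpha\beta$ acts freely (and similarly for the other two intersections). So $L = \pi(\Fix(\alpha))\sqcup\pi(\Fix(\beta))\sqcup\pi(\Fix(\gamma))$, and since each of the three fixed-point sets is $16$ disjoint copies of $T^3$ on which the corresponding quotient group $\langle\beta,\gamma\rangle$, $\langle\alpha,\gamma\rangle$, $\langle\alpha,\beta\rangle \cong \Z_2^2$ acts freely by the previous lemma, each contributes $16/4=4$ components, totalling $12$.

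Next I would exhibit the local model around a single component. Fix one $T^3$-component $N$ of $\Fix(\alpha)$; after translation in $T^7$ we may assume $N=\{x_1=x_2=x_3=x_4=0\}$. The normal bundle of $N$ in $T^7$ is trivial, and within the flat open tube
\begin{equation*}
 V_N = \{(x_1,\dots,x_7) : x_j \in (-\zeta,\zeta) \text{ for } j=1,2,3,4\}
\end{equation*}
the metric is isometric to $T^3 \times B^4_\zeta$ with $\alpha$ acting only on the $B^4_\zeta$ factor as $-\Id$. Thus if I can choose $\zeta$ so that (a) the $\Gamma$-translates $\{gV_N : g \in \Gamma\}$ are pairwise disjoint except for the overlap $V_N = \alpha V_N$, then $\pi$ restricted to the $\Gamma$-orbit of $V_N$ factors through $V_N/\langle\alpha\rangle = T^3 \times (B^4_\zeta/\{\pm 1\})$ and gives an isometric embedding of the latter onto the desired neighbourhood of the component $\pi(N)$. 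Repeating this construction for one representative of each of the $12$ orbits produces $U$.

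The technical step is then to verify that $\zeta = 1/9$ simultaneously achieves (a) for every component and every non-identity element not stabilising it, and in addition keeps the open tubes around the different components of $L$ mutually disjoint. I would run through the non-identity elements of $\Gamma$ case by case, reading off their translation part: for instance $\beta$ shifts $x_2$ by $1/2$ so $\beta V_N \cap V_N = \emptyset$ whenever $2\zeta < 1/2$; $\gamma$ shifts $x_1$ and $x_3$ by $1/2$, giving a similar condition; and the products $\alpha\beta,\ldots,\alpha\beta\gamma$ contribute the same bounds. For disjointness with $\Fix(\beta),\Fix(\gamma)$ the critical separation comes from coordinates that are $0$ or $1/2$ on one fixed set and $1/4$ or $3/4$ on another, giving a gap of $1/4$ and hence the bound $2\zeta<1/4$. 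All of these conditions are comfortably met by $\zeta=1/9$.

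The only real obstacle is bookkeeping: one has to be sure no pair of translates has been overlooked, in particular that translates of the $4$ components of $\pi(\Fix(\alpha))$ inside one $V_N$ do not collide and that tubes around components coming from different generators are separated. This is routine, but since the claim is asserted with the explicit constant $\zeta=1/9$, I would organise the check as a single table listing, for each ordered pair of fixed $T^3$'s in $T^7$, the minimal $T^7$-distance between them, and then verify that this minimum exceeds $2/9$.
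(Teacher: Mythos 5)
Your proof is correct and follows the standard argument; the paper itself does not supply a proof but cites it directly from Joyce, whose reasoning is exactly the elementary count-and-check you propose (identify $L$ via the previous lemma, use freeness of the complementary $\Z_2^2$ to pass from $16$ to $4$ components per generator, then exhibit the local product quotient and sweep through $\Gamma$-translates to find a working $\zeta$). Two small slips worth fixing: (i) your $V_N = \{x : x_j \in (-\zeta,\zeta),\ j=1,\dots,4\}$ is a coordinate box, so it is isometric to $T^3 \times (-\zeta,\zeta)^4$ rather than $T^3 \times B^4_\zeta$ -- take the geodesic $\zeta$-tube $\{x : x_1^2+\dots+x_4^2 < \zeta^2\}$ instead, which is what the lemma needs and for which exactly the same disjointness checks go through (indeed more easily, since the round tube sits inside your box); (ii) the disjointness from tubes around $\pi(\Fix(\beta))$ and $\pi(\Fix(\gamma))$ gives the binding constraint $2\zeta<1/4$, i.e. $\zeta<1/8$; note this is the tightest of all your inequalities and $\zeta=1/9$ clears it, so you should flag that this, not the $\zeta<1/4$ intra-orbit condition, is what actually forces the smaller constant.
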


For $0<t \ll 1$ we now define a compact $7$-manifold $N_t$, which can be thought of as a resolution of the orbifold $T^7 /\Gamma$, and a one-parameter family of closed $G_2$-structures $\varphi^t$ thereon.
We can choose an identification $U \simeq L \times \left(B^4_\zeta /\{ \pm 1\} \right)$ such that we can write on $U$
\begin{align*}
 \varphi_0
 &=
 \delta_1 \wedge \delta_2 \wedge \delta_3-
 \sum_{i=1}^3
 \omega_i \wedge \delta_i,
 &
 *\varphi_0
 &=
 \frac{1}{2} \omega_1 \wedge \omega_1-
 \sum
 _
 {\substack{(i,j,k) = (1,2,3)\\
 \text{and cyclic permutation}
 }}
 \omega_i \wedge \delta_j \wedge \delta_k,
\end{align*}
where $\delta_1,\delta_2,\delta_3$ are covariant constant orthonormal $1$-forms on $L$, and $\omega_1,\omega_2,\omega_3$ are the Hyperkähler triple from \cref{section:g2-structures}.

As before, denote by $\XEH$ the Eguchi-Hanson space and by $\rho: \XEH \rightarrow \C^2/\{ \pm 1\}$ the blowup map from \cref{lemma:eguchi-hanson-blowup-map}.
Define $\check{r} := | \rho |: \XEH \rightarrow \R_{\geq 0}$.
For $t \in (0,1)$, let $\hat{U}:=\hat{U}_t:=L \times \{x \in \XEH: \check{r}(x) < \zeta t^{-1} \}$.
Define
\begin{align}
\label{equation:resolution-of-t7-gamma}
 N_t:=
 \left(
 (T^7/\Gamma) \setminus L \sqcup \hat{U}
 \right)/\sim,
\end{align}
where for $x=(x_h, x_v) \in U \subset L \times \C^2/\{ \pm 1\}$ and $y=(y_h, y_v) \in \hat{U} \subset L \times \XEH$ we have $x \sim y$ if $x_h=y_h$ and $t \cdot \rho(y_v)=x_v$. 
The smooth manifold $N_t$ also comes with a natural projection map $\pi: N_t \rightarrow T^7/\Gamma$ induced by $\rho$,
and we extend $\check{r}$ to a map on all of $N_t$ via
\begin{align*}
 \check{r}: N_t & \rightarrow \R_{\geq 0}
 \\
 x & \mapsto
 \begin{cases}
  | \rho(x) | & \text{ if } x \in \hat{U}, \\
  t^{-1}\zeta & \text{ otherwise}.
 \end{cases}
\end{align*}

Write 
\begin{align}
    \label{equation:r_t}
    r_t := t\check{r}
\end{align}
and choose a non-decreasing function
\begin{align}
    \label{equation:cut-off}
    \chi:[0,\zeta] \rightarrow [0,1]
    \text{ such that }
    \chi(s)=0 \text{ for } s\leq \zeta/4 \text{ and } \chi(s)=1 \text{ for } s \geq \zeta/2
\end{align}
and set
\begin{align}
\label{equation:tilde-omega}
 \tilde{\omega}_{i}
 :=
 \omega_i^{(1)} -
 \d
 \left(
  \chi(r_t)
  \tau_i^{(1)}
 \right).
\end{align}
The $\tau_i^{(1)}$ were defined in \cref{lemma:eguchi-hanson-ALE-condition}, and are the difference between the flat Hyperkähler triple on $\C^2 /\{ \pm 1 \}$ and the Hyperkähler triple $(\omega_1^{(1)}, \omega_2^{(1)}, \omega_3^{(1)})$ on $\XEH$.
On $\hat{U}$ we have $\tilde{\omega}_{i}=\omega_i$ where $r_t >\zeta/2$, and $\tilde{\omega}_{i}=\omega_i^{(1)}$ where $r_t < \zeta/4$.
Now define a $3$-form $\varphi^t \in \Omega^3(N_t)$ and a $4$-form $\vartheta^t \in \Omega^4(N_t)$ as follows:
on $(T^7/\Gamma) \setminus U \subset N_t$, set $\varphi^t=\varphi$ and $\vartheta^t=*\varphi$.
On $\hat{U} \subset L \times \XEH$ let
\begin{align}
 \label{equation:g2-structure-on-kummer-construction}
 \varphi^t
 &:=
 \delta_1 \wedge \delta_2 \wedge \delta_3-
 t^2
 \sum_{i=1}^3
 \tilde{\omega}_{i} \wedge \delta_i,
 \\
 \vartheta^t
 &:=
 t^4
 \frac{1}{2} \tilde{\omega}_{1} \wedge \tilde{\omega}_{1}-
 t^2
 \sum
 _
 {\substack{(i,j,k) = (1,2,3)\\
 \text{and cyclic permutation}
 }}
 \tilde{\omega}_{i} \wedge \delta_j \wedge \delta_k.
\end{align}

This definition mimics the product situation explained in \cref{section:g2-structures}. 
For small $t$, the $3$-form $\varphi^t$ is a $G_2$-structure and therefore induces a metric $g^t$.
Both $\varphi^t$ and $\vartheta^t$ are closed forms, so, if $*\varphi^t=\vartheta^t$, then $\varphi^t$ would be a torsion-free $G_2$-structure by \cref{theorem:torsion-free-g2-structures-characterisation}.
However, this does not hold, and $\varphi^t$ is not a torsion-free $G_2$-structure.
The following $3$-form $\psi^t$ is meant to measure the torsion of $\varphi^t$:
\begin{align}
\label{equation:definition-psi-t}
 * \psi^t
 =
 \Theta(\varphi^t)-\vartheta^t.
\end{align}
Its crucial properties are:
\begin{lemma}
\label{lemma:kummer-construction-pregluing-estimate}
 Let $\psi^t \in \Omega^3(N_t)$ as in \cref{equation:definition-psi-t}.
 There exists a positive constant $c$ independent of $t$ such that
 \begin{align*}
  \d^* \psi^t &= \d^* \varphi^t,
  &
  \|{
   \psi^t
  }_{C^{1,\alpha}}
  &\leq
  ct^4,
 \end{align*}
 where the Hölder norm is defined with respect to the metric $g^t$ and its induced Levi-Civita connection.
\end{lemma}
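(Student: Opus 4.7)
The plan is to first verify the divergence identity by Hodge theory and then localise the $C^{1,\alpha}$ estimate to the gluing region, where it follows from the ALE decay of $\tau_1^{(t^4)}$ together with the expansion of $\Theta$ given in \cref{proposition:Theta-estimates}.

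For the divergence identity, since $\ast\ast$ is the identity on $3$-forms and $4$-forms in dimension $7$, the defining equation $\ast\psi^t = \Theta(\varphi^t) - \vartheta^t = \ast\varphi^t - \vartheta^t$ rearranges to $\psi^t = \varphi^t - \ast\vartheta^t$. Using $\d^* = -\ast\d\ast$ on $3$-forms gives $\d^*\psi^t = \d^*\varphi^t + \ast\,\d\vartheta^t$, so it suffices to check $\d\vartheta^t = 0$. This is immediate from the definition in \cref{equation:tilde-omega,equation:g2-structure-on-kummer-construction}: each $\tilde\omega_{i,t}$ is closed, since it equals $\omega_i^{(t^4)}$ minus the exterior derivative of a $1$-form, and the $\delta_j$ are parallel constants on $L$.

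For the Hölder estimate, I would first observe that $\psi^t$ vanishes outside the gluing annulus $A:=\{\zeta/4 \leq d_{T^7/\Gamma}(\pi(\cdot),L) \leq \zeta/2\}$. On $\{d \leq \zeta/4\}$ one has $\tilde\omega_{i,t}=\omega_i^{(t^4)}$, so $\varphi^t$ is the product $G_2$-structure attached to the Hyperkähler triple $(\omega_i^{(t^4)})$ on $T^3 \times X$, which is torsion-free by \cref{section:product-g2-structure}; and on $\{d \geq \zeta/2\}$ one has $\tilde\omega_{i,t}=\omega_i$, so $\varphi^t$ coincides with the flat $\varphi_0$. In both regions $\Theta(\varphi^t)=\vartheta^t$. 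On $A$ itself, I would write $\varphi^t = \varphi_0 + \chi$ with
\begin{align*}
 \chi = -(\tilde\omega_{1,t}-\omega_1)\wedge \delta_1 = -\d\bigl((1-\chi(d))\,\tau_1^{(t^4)}\bigr)\wedge \delta_1,
\end{align*}
using that $\tilde\omega_{2,t}=\omega_2$ and $\tilde\omega_{3,t}=\omega_3$. On $A$ the function $r$ is bounded above and below by positive constants independent of $t$, so the ALE decay estimate in \cref{lemma:eguchi-hanson-ALE-condition} yields $|\nabla^l\tau_1^{(t^4)}|_{g_0}\leq c\,t^4$ for every $l$, hence $\|\chi\|_{C^{1,\alpha}(A)}\leq c\,t^4$. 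Moreover on $A$ the metric $g^t$ is uniformly equivalent to the flat metric $g_0$ with constants independent of $t$, so Hölder norms computed with either metric are interchangeable.

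Finally, I would decompose $\ast\psi^t = [\Theta(\varphi^t)-\Theta(\varphi_0)] + [\Theta(\varphi_0)-\vartheta^t]$. The first bracket equals $-T(\chi)-F(\chi)$ by \cref{proposition:Theta-estimates} applied with base $\varphi_0$, and the stated Hölder bounds give
\begin{align*}
 \|T(\chi)+F(\chi)\|_{C^{1,\alpha}(A)} \leq c\bigl(\|\chi\|_{C^{1,\alpha}}+\|\chi\|_{C^{1,\alpha}}^2\bigr) \leq c\,t^4,
\end{align*}
since $\nabla\varphi_0=0$ so the cross terms involving $\nabla\varphi_0$ vanish. The second bracket is
\begin{align*}
 \ast\varphi_0-\vartheta^t = -\tfrac12(\tilde\omega_{1,t}-\omega_1)\wedge(\tilde\omega_{1,t}+\omega_1) + (\tilde\omega_{1,t}-\omega_1)\wedge \delta_2\wedge \delta_3,
\end{align*}
which is a polynomial expression in $\tilde\omega_{1,t}-\omega_1$ with smooth coefficients, bounded in $C^{1,\alpha}(A)$ by $c\,\|\tilde\omega_{1,t}-\omega_1\|_{C^{1,\alpha}}\leq c\,t^4$. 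Combining both brackets and applying $\ast_{g^t}$ (whose operator norm is uniformly bounded on $A$) gives $\|\psi^t\|_{C^{1,\alpha}}\leq c\,t^4$.

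The main subtlety is uniformity of constants as $t\to 0$: one must check that the constants appearing in \cref{proposition:Theta-estimates} and in the metric equivalence $g^t\simeq g_0$ on $A$ do not blow up. Both are controlled because $\varphi_0$ is a fixed flat background and the ALE corrections to $g^t$ on $A$ are themselves of order $t^4$, so this is routine rather than delicate.
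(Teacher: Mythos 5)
Your proof is correct and follows the same general framework as the paper's: verify the divergence identity from $*\psi^t=*\varphi^t-\vartheta^t$ and closedness of $\vartheta^t$, observe that the torsion is supported in the gluing annulus, expand $\Theta$ around a torsion-free base via \cref{proposition:Theta-estimates}, and close with the ALE decay from \cref{lemma:eguchi-hanson-ALE-condition}. The one substantive difference is the choice of base: you perturb off the fixed flat $\varphi_0$, with deformation $\chi=-\d\bigl((1-\chi(d))\tau_1^{(t^4)}\bigr)\wedge\delta_1$, whereas the paper perturbs off the $t$-dependent product structure $\varphi_{X\times L}^{(t^4)}$ with deformation $-\delta_1\wedge\d\bigl(\chi(d)\tau_1^{(t^4)}\bigr)$, and correspondingly your ``algebraic'' second bracket is $*\varphi_0-\vartheta^t$ rather than $*_{g^{(t^4)}_{X\times L}}\varphi^{(t^4)}_{X\times L}-\vartheta^t$. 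Both bases are torsion-free (hence parallel), so either works; your fixed flat base has the small advantage of being $t$-independent and makes the quadratic algebraic term $-\tfrac12(\tilde\omega_{1,t}-\omega_1)\wedge(\tilde\omega_{1,t}+\omega_1)$ explicit, while the paper's base has the advantage that its deformation is globally defined on the whole resolved tubular neighbourhood rather than only away from $S^2$. One phrase of yours is slightly under-justified: saying ``$*_{g^t}$ has uniformly bounded operator norm'' is not quite enough to pass from $\|*\psi^t\|_{C^{1,\alpha}}$ to $\|\psi^t\|_{C^{1,\alpha}}$, since that norm involves derivatives; what you actually need (and what the paper invokes explicitly) is that $\nabla_X$ and $*_{g^t}$ commute, i.e.\ $*_{g^t}$ is parallel for the Levi-Civita connection of $g^t$, so that Hölder norms pass through $*$ exactly.
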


\begin{proof}
 The equality $\d^* \psi^t = \d^* \varphi^t$ follows from \cref{equation:definition-psi-t} and the fact that $\vartheta^t$ is closed.
 
 The operator $*$ is parallel, so the covariant derivative $\nabla_X$ and $*$ commute for every vector field $X$ on $N_t$, therefore it suffices to estimate $* \psi^t$ rather than $\psi^t$.
 Write
 $\varphi_{\XEH \times L}^{(t)}
 :=
 \delta_1 \wedge \delta_2 \wedge \delta_3-
 t^2
 \sum_{i=1}^3
 \omega_i^{(1)} \wedge \delta_i$
 for the product $G_2$-structure on $\XEH \times L$ and denote the induced metric, which is the product metric, by $g_{\XEH \times L}^{(t)}$.
 By definition of $\varphi^t$ we have $\varphi^t=\varphi_{\XEH \times L}^{(t)}$ on the set $\{ x \in N_t : r_t(x) < \zeta/4 \}$.
 Recall the linear map $T$ and the non-linear map $F$ from \cref{proposition:Theta-estimates} satisfying $\Theta (\varphi + \xi) = *\varphi - T(\xi) - F(\xi)$ for a $G_2$-structure $\varphi$ and a small deformation $\xi$.
 Using this notation, we get:
 \begin{align*}
  \Theta(\varphi^t)- \vartheta^t
  &=
   \Theta
   \left(
    \varphi_{\XEH \times L}^{(t)}
    -
    t^2
    \delta_1
    \wedge
    \d
    \left(
    \chi(r_t)
    \tau_1^{(1)}
    \right)
   \right)
  \\ 
  &\;\;\;\;\;\;
  -
  *_{g_{\XEH \times L}^{(t)}}
  \varphi_{\XEH \times L}^{(t)}
  +
  t^2
  \delta_2 \wedge \delta_3 \wedge
  \d
    \left(
    \chi(r_t)
    \tau_1^{(1)}
  \right)
  \\
  &=
  T
  \left(
  	t^2
    \delta_1
    \wedge
    \d
    \left(
    \chi(r_t)
    \tau_1^{(1)}
    \right)
  \right)
  -
  F
  \left(
    -
    t^2
    \delta_1
    \wedge
    \d
    \left(
    \chi(r_t)
    \tau_1^{(1)}
    \right)
  \right)
  \\ 
  &\;\;\;\;\;\;
  +
  t^2
  \delta_2 \wedge \delta_3 \wedge
  \d
    \left(
    \chi(r_t)
    \tau_1^{(1)}
  \right).
 \end{align*}
 Here we used the equality $\omega^{(k)}_1-\omega_1=\d \tau^{(k)}_1$ from \cref{lemma:eguchi-hanson-ALE-condition} in the first step and the definition of $T$ and $F$ in the second step.
 
 Note that $\Theta(\varphi^t)- \vartheta^t$ is supported on $\{ x \in M : (\zeta/4) t^{-1} < \check{r} < (\zeta/2) t^{-1} \}$.
 Therefore, by \cref{equation:ALE-hyperkaehler-decay},
 \begin{align*}
  \left|
  t^2
  \d
    \left(
    \chi(r_t)
    \tau_1^{(1)}
  \right)
  \right|_{t^2 g_{(1)}}
  &\leq
  \left|
  t^2
  \left(
  \d \chi(r_t)
  \right)
  \tau_1^{(1)}
  \right|_{t^2 g_{(1)}}
  +
  \left|
  t^2
  \chi(r_t)
  \d
  \tau_1^{(1)}
  \right|_{t^2 g_{(1)}}
  \\
  &
  \leq
  ct
  \left|
  t
  \tau_1^{(1)}
  \right|_{t^2 g_{(1)}}
  +
  c
  \left|
  t^2\chi(r_t)
  \d \tau_1^{(1)}
  \right|_{t^2 g_{(1)}}
  \\
  &=
  t \mathcal{O}(\check{r}^{-3})+\mathcal{O}(\check{r}^{-4})
  \leq
  ct^4.
 \end{align*}
 Using the estimates for $T$ and $F$ from \cref{proposition:Theta-estimates} we get the claim.
\end{proof}

\subsection{The Laplacian on $\R^3 \times \XEH$}
\label{subsection:the-laplacian-on-r3-times-X}

In the next section we will prove an estimate for the Laplacian on $2$-forms on $N_t$.
We will use a blowup argument to essentially reduce the analysis on $N_t$ to the analysis on $T^7/\Gamma$ and $\R^3 \times \XEH$.
In this section we will cite a general result for uniformly elliptic operators on product manifolds $\R^n \times Y$ from \cite{Walpuski2013}, where $Y$ is a Riemannian manifold, and use this to find that harmonic $2$-forms on $\R^3 \times \XEH$ are wedge products of parallel forms on $\R^3$ and harmonic forms on $\XEH$.

\begin{definition}[Definition 2.75 in  \cite{Walpuski2013}]
 A Riemannian manifold $Y$ is said to be of \emph{bounded geometry} if it is complete, its Riemann curvature tensor is bounded from above and its injectivity radius is bounded from below.
 A vector bundle over $Y$ is said to be of \emph{bounded geometry} if it has trivialisations over balls of fixed radius such that the transition functions and all of their derivatives are uniformly bounded.
 We say that a complete oriented Riemannian manifold $X$ has \emph{subexponential volume growth} if for each $x \in X$ the function $r \mapsto \vol(B_r(x))$ grows subexponentially, i.e., $\vol(B_r(x))= o(\exp(cr))$ as $r \rightarrow \infty$ for every $c>0$.
\end{definition}

\begin{lemma}[Lemma 2.76 in \cite{Walpuski2013}]
\label{lemma:walpuski-constant-in-r-n-directions}
\label{lemma:constant-in-R-n-direction-general}
 Let $E$ be a vector bundle of bounded geometry over a Riemannian manifold $Y$ of bounded geometry and with subexponential volume growth, and suppose that $D: C^\infty(Y,E) \rightarrow C^\infty(Y,E)$ is a uniformly elliptic operator of second order whose coefficients and their first derivatives are uniformly bounded, that is non-negative, i.e., $\< Da, a \> \geq 0$ for all $a \in W^{2,2}(Y,E)$, and formally self-adjoint.
 Let $p: \R^n \times Y \rightarrow Y$ be the projection onto the second component and $a \in C^\infty(\R^n \times Y, p^* E)$ such that
 \begin{align*}
  \left(
   \Delta_{\R^n}+p^*D
  \right)
  a=0
 \end{align*}
 and $\|{a }_{L^\infty}$ is finite.
 Then $a$ is constant in the $\R^n$-direction, that is $a(x,y)=a(y)$.
 Here, $\Delta_{\R^n}$ acts on a section $a \in C^\infty(\R^n \times Y, p^* E)$ by using the identification $C^\infty(\R^n \times Y, p^* E)=C^\infty(\R^n, C^\infty( Y, E))$.
\end{lemma}

\begin{corollary}
\label{lemma:laplacian-independent-of-r-3-direction}
 Let $Y$ be a manifold of bounded geometry and with subexponential volume growth.
 If $a \in \Omega^2(\R^3 \times Y)$ satisfies $\|{ a }_{L^\infty}< \infty$ and
 \begin{align*}
  \Delta_{\R^3 \times Y} \, a =0,
 \end{align*}
 then $a$ is independent of the $\R^3$-direction.
\end{corollary}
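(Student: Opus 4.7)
The plan is to use the product bidegree decomposition together with Lemma 2.76 applied to each component. Since the metric is a Riemannian product, we have a smooth splitting
\[
 \Lambda^2 T^*(\R^3 \times Y)
 =
 \bigoplus_{p+q=2}
 \Lambda^p T^*\R^3 \otimes \Lambda^q T^*Y,
\]
and the Laplacian $\Delta = \Delta_{g_{\R^3} \oplus g_{(1)}}$ preserves this bigrading. So first I would split $a = a^{(0,2)} + a^{(1,1)} + a^{(2,0)}$ and observe that each $a^{(p,q)}$ is harmonic with $\|a^{(p,q)}\|_{L^\infty} \leq c\|a\|_{L^\infty}<\infty$. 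It therefore suffices to treat one fixed bidegree.

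Next I would fix the parallel global frame $\{dx^{I}\}_{|I|=p}$ of $\Lambda^p T^*\R^3$ given by wedge products of the standard coordinate $1$-forms, and write
\[
 a^{(p,q)} = \sum_I dx^I \wedge \alpha_I,
\]
where each $\alpha_I$ is a smooth section of the pullback of $\Lambda^q T^*Y$ to $\R^3 \times Y$, again with $\|\alpha_I\|_{L^\infty} < \infty$. Because the $dx^I$ are parallel with respect to the product Levi-Civita connection, $\Delta(dx^I \wedge \alpha_I) = dx^I \wedge \Delta \alpha_I$, and linear independence of the $dx^I$ together with $\Delta a^{(p,q)} = 0$ forces $\Delta \alpha_I = 0$ individually. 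On sections of the pullback bundle the product Laplacian splits as $\Delta_{\R^3} + \Delta_Y$, where $\Delta_Y$ is the fiberwise Hodge Laplacian on $q$-forms on $Y$.

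At this point I would invoke \cref{lemma:walpuski-constant-in-r-n-directions} with $D = \Delta_Y$. The hypotheses are all met: $Y$ has bounded geometry and subexponential volume growth by assumption, the bundle $\Lambda^q T^*Y$ inherits bounded geometry, and the Hodge Laplacian is a uniformly elliptic, non-negative, formally self-adjoint second-order operator with uniformly bounded coefficients. Since $\alpha_I$ is a bounded solution of $(\Delta_{\R^3} + \Delta_Y)\alpha_I = 0$, the lemma yields $\alpha_I(x,y) = \alpha_I(y)$. Substituting back, $\Delta_{\R^3}\alpha_I \equiv 0$ and hence $\Delta_Y \alpha_I = 0$, i.e.\ $\alpha_I$ is harmonic on $Y$. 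Collecting the contributions from all bidegrees gives $a$ as a finite sum $\sum_I dx^I \wedge \alpha_I$ of wedge products of parallel forms on $\R^3$ with harmonic forms on $Y$, as required.

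There is essentially no real obstacle here once \cref{lemma:walpuski-constant-in-r-n-directions} is granted; the only things to check carefully are that the bigrading really is preserved by $\Delta$ (a direct consequence of the product structure of both the metric and the Levi-Civita connection) and that the bounded-geometry hypotheses transfer from $Y$ to the bundle of $q$-forms, both of which are routine. \Cref{theorem:separated-variables-are-dense} does not seem strictly necessary for this finite-sum conclusion, but could be invoked as an alternative route to approximate $\alpha_I(x,y)$ by separated sums $\sum_j f_j(x)\beta_j(y)$ and then pass to the limit; the direct decomposition above is cleaner.
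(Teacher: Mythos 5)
Your argument is correct, and it takes a genuinely different route from the paper's. Both proofs hinge on \cref{lemma:walpuski-constant-in-r-n-directions}, but the paper proceeds in two steps: first it verifies the claim for forms in separated variables $a_1 \wedge a_2$, and then it invokes the density result \cref{theorem:separated-variables-are-dense} (Llavona's theorem) to treat general $a$. You instead exploit the fact that the $\R^3$ factor is flat, so $\Lambda^p T^*\R^3$ is globally trivialised by the constant coframe $\{dx^I\}$; every bidegree component $a^{(p,q)}$ is therefore already a \emph{finite} sum $\sum_I dx^I \wedge \alpha_I$, and Lemma 2.76 applies directly to each bounded coefficient section $\alpha_I$ with $D = \Delta_Y$. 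This is a cleaner reduction: the paper's appeal to density is somewhat delicate, since $C^m_{\text{loc}}$-approximation of $a$ by algebra elements in separated variables does not obviously yield harmonic approximants with the required product structure, whereas your decomposition avoids the issue entirely. The only points you rightly flag as needing checking — that the product Hodge Laplacian preserves the $(p,q)$-bigrading and that $\Delta(dx^I \wedge \alpha_I) = dx^I \wedge (\Delta_{\R^3} + \Delta_Y)\alpha_I$ because $dx^I$ is parallel — are indeed standard consequences of the Riemannian product structure. Your observation at the end that \cref{theorem:separated-variables-are-dense} is not strictly necessary matches my reading: your route is self-contained and arguably the one the paper should have used.
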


\begin{proof}
    Fix a trivialisation $(\d x_1, \d x_2, \d x_3)$ of the pullback of $\Lambda^1(\R^3)$ to $\R^3 \times \XEH$.
    Let $p: \R^3 \times Y \rightarrow Y$ be the projection onto the second component.
    Write $a \in \Omega^2(\R^3 \times Y)$ as
    \[
        a
        =
        a^{(2)}
        +
        \sum_{i=1}^3 \d x_i \wedge a_i^{(1)}
        +
        \sum_{1 \leq j < k \leq 3} \d x_j \wedge \d x_k \cdot a_{jk}^{(0)},
    \]
    where $a^{(2)} \in \Gamma(p^*(\Lambda^2 T^*Y))$ and $a^{(1)}_i \in \Gamma(p^*(\Lambda^1 T^*Y))$ for $i \in \{1,2,3\}$ and $a^{(0)}_{jk} \in \Gamma(p^*(\Lambda^0 T^*Y))$ for $1 \leq j < k \leq 3$.
    Then $\Delta_{g_{\R^3} \oplus g_{(1)}} a=0$ if and only if
    \[
    (\Delta_{\R^3} + p^* \Delta_Y) a^{(m)}_{\bullet}=0 \text{ for } a^{(m)}_{\bullet} \in \{a^{(2)}, a^{(1)}_i,a^{(0)}_{jk} \}_{1 \leq i \leq 3, 1 \leq j < k \leq 3}.
    \]
    \Cref{lemma:walpuski-constant-in-r-n-directions} then gives that $a$ is independent of the $\R^3$-direction.
\end{proof}

\subsection{The Laplacian on $N_t$}
\label{subsection:the-laplacian-on-M}

We now move on to the heart of the argument:
an operator bound for the inverse of the Laplacian on $N_t$.
The Laplacian on $2$-forms has a kernel of dimension $b^2(N_t)$, so we can only expect such a bound for forms which are not in the kernel.
Standard elliptic theory would give an estimate for forms orthogonal to the kernel.
This estimate would depend on the gluing parameter $t$, but we want a \emph{uniform} estimate, i.e. an estimate independent of $t$.
Proving such an estimate is the content of this section.

We first define weighted Hölder norms analogous to the previous sections.
These norms have the following two important properties:
far away from $L$, they are uniformly equivalent to ordinary Hölder norms, and near $L$ they are uniformly equivalent to the weighted Hölder norms on $\R^3 \times \XEH$, after applying a rescaling map.

\begin{definition}
\label{definition:weight-function}
 For $t \in (0,1)$ define the weight functions
 \begin{align}
 \label{equation:weight-function-on-M}
 \begin{split}
  w_t : N_t & \rightarrow \R_{>0}
  \\
  x & \mapsto t+r_t,
 \end{split}
  \\
  \notag
  w_{\R^3 \times \R^4}: \R^3 \times \R^4 & \rightarrow \R_{>0}
  \\
  \notag
  (x,y) & \mapsto \left| y \right|,
  \\
  w_{\R^3 \times \XEH}: \R^3 \times \XEH & \rightarrow \R _{>0}
  \notag
  \\
  x & \mapsto 
  1+\check{r}
  \notag
 \end{align}
 and for $k \in \N$, $\alpha \in (0,1)$, $\beta \in \R$ the weighted Hölder norms $\|{ \cdot }_{C^{k,\alpha}_{\beta;t}}$ on $N_t$ and $\|{ \cdot }_{C^{k,\alpha}_{\beta}}$ on $\R^3 \times \R^4$ and $\R^3 \times \XEH$ using the same formulae as in \cref{definition:weighted-hoelder-norms} but with $w_t$, $w_{\R^3 \times \R^4}$, and $w_{\R^3 \times \XEH}$ in place of $w$.
 Here, $r_t$ was defined in \cref{equation:r_t} and we have that $w_t=t(1+\check{r})$.
\end{definition}

Roughly speaking, two norms can be defined on the set $\hat{U} \subset N_t$ :
the norm $\|{ \cdot }_{C^{k,\alpha}_{\beta}}$ from $\R^3 \times \XEH$ and the norm $\|{ \cdot }_{C^{k,\alpha}_{\beta;t}}$ that is defined on all of $N_t$.
This is not completely precise, because $\hat{U}$ is a product whose first factor is $L$ rather than $\R^3$, but (twelve copies of) $\R^3$ is the universal cover of $L$, so by pulling back one can evaluate tensors defined on $\hat{U}$ in that norm.
The metric on $\hat{U} \subset N_t$ is scaled by a factor of $t$ compared to the product metric on $\R^3 \times \XEH$, and so the Hölder norms on tensors are related by a rescaling as well.
This will be made precise in \cref{lemma:s_t-second-lemma}.

We now define a way to decompose elements $a \in \Omega^2(N_t)$ into a component $\overline{\pi}_t a$ that is proportional to a cut-off of $\nu \in \Omega^2(\XEH)$ from \cref{equation:nu} on every fibre $\{y\} \times \XEH \subset L \times \XEH$, and a remainder, denoted by $\rho_t a$.
The reason for this is the following:
the Laplacian on $\Im \overline{\pi}_t$ is approximately the Laplacian on $L$, and its inverse has operator norm of order $\mathcal{O}(1)$ uniformly in $t$ as a map $C^{2,\alpha}_{\beta;t}(\Lambda^2(N_t)) \rightarrow C^{0,\alpha}_{\beta;t}(\Lambda^2(N_t))$.
For this to be the case the weight for the norms on the domain and the codomain must be the same.
On $\Im \rho_t$, it will turn out that the Laplacian has operator norm of order $\mathcal{O}(1)$ uniformly in $t$ as a map $C^{2,\alpha}_{\beta;t}(\Lambda^2(N_t)) \rightarrow C^{2,\alpha}_{\beta-2;t}(\Lambda^2(N_t))$.
Here the weight changed in the same way as it did on the non-compact asymptotically conical space $\XEH$, cf. \cref{subsubsection:harmon-forms-on-eh-space}.
In order to prove an estimate of the form $\|{a} \leq c \|{\Delta a}$ we will define norms that incorporate these two different scaling behaviours in this section.
The idea is taken from \cite{Walpuski2017}.

Using the cut-off function $\chi:[0,\zeta] \rightarrow [0,1]$ from \cref{equation:cut-off}, we write $\chi_t := \chi(2 r_t)$ as a shorthand.
Define $\pi_t:\Omega^2(N_t) \rightarrow \Omega^0(L)$ via
\begin{align}
\label{equation:def-pi}
(\pi_t a)(y):=
\< a|_{ \{y\} \times \XEH}, (1-\chi_t) \nu' \>_{L^2,t^2 g_{\XEH}}
\text{ for }
y \in L,
\end{align}
where $\nu' \in \Omega^2(\XEH)$ is a multiple of $\nu$ from \cref{equation:nu} satisfying
$\< \chi_t\nu', \chi_t\nu' \>_{L^2,t^2 g_{\XEH}}=1$.
This is equivalent to $\< \chi_t\nu', \chi_t\nu' \>_{L^2,g_{\XEH}}=1$, i.e. in the metric $g_{\XEH}$ rather than $t^2 g_{\XEH}$, because the $L^2$-norm on $2$-forms is a conformal invariant.
Define
\begin{align}
    \label{equation:iota}
    \begin{split}
        \iota_t: \Omega^0(L) &\rightarrow \Omega^2(N_t)
        \\
        f & \mapsto
        \chi_t \cdot p_L^*f \cdot p_{\XEH}^*\nu'
    \end{split} 
\end{align}
where $p_L: L \times \XEH \rightarrow L$, $p_{\XEH}: L \times \XEH \rightarrow \XEH$ are projection maps.
As written, $(\iota_t f)$ is an element in $\Omega^2(L \times \XEH)$, but because $\supp (\iota_t f) \subset \hat{U}$, we can view it as an element in $\Omega^2(N_t)$.
Then
\begin{align}
\pi_t \iota_t f =f
\text{ for all }
f \in \Omega^0(L).
\end{align}
Last, define 
\begin{align}
\label{equation:pi-bar-definition}
    \overline{\pi}_t:=\iota_t \pi_t \text{ as well as }
    \rho_t := 1-\overline{\pi}_t.
\end{align}

We are now ready to define the composite norms which weight the $\overline{\pi}_t$ and $\rho_t$ components differently.

\begin{definition}
\label{definition:x-y-norms}
For $\alpha \in (0,1)$ and $\beta \in (-1,0)$ let
 \begin{align*}
  \|{a}_{\mathfrak{X}_t}
  &:=
  \|{\rho_t a}_{C^{2,\alpha}_{\beta;t}}
  +
  t^{-3/2}
  \|{\pi_t a}_{C^{2,\alpha}},
  \\
  \|{a}_{\mathfrak{Y}_t}
  &:=
  \|{\rho_t a}_{C^{0,\alpha}_{\beta-2;t}}
  +
  t^{-3/2}
  \|{\pi_t a}_{C^{0,\alpha}}.
 \end{align*}
\end{definition}

In the following, we will always assume that $\alpha$ and $\beta$ are close to $0$.
The most restrictive estimate in which this fact is used is \cref{equation:alpha-beta-smallness-condition}.
For concreteness, one may choose $\alpha = 1/16$ and $\beta = - 1/16$.

\begin{definition}[Approximate kernel]
\label{definition:approximate-kernel}
 Let $C_1, \dots, C_{12}$ be the connected components of $\hat{U}$ and let $\chi_{C_i}$ be the characteristic function of the set $C_i$.
 Then define the \emph{approximate kernel of $\Delta$ on $N_t$} to be
 \begin{align*}
 \mathcal{K}
 :=
 \{
  (1-\chi_t) \pi^* a: a \in \Ker \Delta_{T^7/\Gamma}
 \}
 \oplus
 \spann
 \left(
  (1-\chi_t) \cdot p_{\XEH}^* \nu \cdot \chi_{C_i}
 \right)_{i=1,\dots,12}
 ,
 \end{align*}
 where $\pi: N_t \rightarrow T^7/\Gamma$ is the projection map from the previous section.
\end{definition}

With all this notation in place we can state the linear estimate that will be used later on:

\begin{proposition}
\label{proposition:x-y-laplace-estimate-N_t}
There exists $c$ independent of $t$ such that for $t$ small enough we have $\Im \left( \Delta|_{\mathcal{K}^\perp} \right)=\Im(\Delta)$ and for all $a \in \Omega^2(N_t)$, $a \perp \mathcal{K}$
 \begin{align}
 \label{equation:x-y-laplace-estimate}
  \|{a}_{\mathfrak{X}_t}
  \leq
  c
  \|{\Delta a}_{\mathfrak{Y}_t}.
 \end{align}
\end{proposition}

As is often the case in geometric analysis proofs in which an approximate solution is perturbed to a genuine solution, it is the linear estimate which is the most laborious to prove.
The proof of the linear estimate \cref{proposition:x-y-laplace-estimate-N_t} will be given in \cref{section:linear-estimate-proof}.
In the remainder of \cref{section:torsion-free-structures-on-the-generalised-kummer-construction} we complete the perturbation of the approximately torsion-free $G_2$-structure $\varphi^t$ from \cref{equation:g2-structure-on-kummer-construction} to a torsion-free $G_2$-structure, thereby proving our main theorem, \cref{corollary:kummer-construction-simplified-torsion-free-estimate}.

\subsection{The Existence Theorem}
\label{subsection:the-existence-theorem}

We will now prove the theorem which guarantees the existence of a torsion-free $G_2$-structure when starting from a $G_2$-structure with small torsion.

\begin{theorem}
\label{theorem:torsion-free-final-existence-theorem}
Assume there exist $c',c''>0$ such that $\psi^t \in \Omega^3(N_t)$ satisfies $\d ^* \varphi^t=\d ^* \psi^t$ and
\begin{align*}
\|{
\d^*\psi^t
}_{\mathfrak{Y}_t}
&\leq
c't^4,
\\
\|{\psi^t}_{C^{0,\alpha}_{0;t}}
&\leq
c''t^4.
\end{align*}
Then, for small $t$, there exists $\eta^t \in \Omega^2(N_t)$ such that $\varphi^t+\d \eta$ is a torsion-free $G_2$-structure and $\|{\rho_t \eta}_{C^{2,\alpha/2}_{\beta;t}}+t^{-3/2}\|{\pi_t \eta}_{C^{2,\alpha/2}} \leq ct^4$.
\end{theorem}

To ease notation, we write $\varphi=\varphi^t$, $\psi=\psi^t$, and $\eta=\eta^t$ throughout the proof.

\begin{proof}
We will construct $\eta \in \Omega^2(N_t)$ satisfying
\begin{align}
\label{equation:joyce-slice-theorem-torsion-free-pde}
\Delta \eta
=
\d^* \psi
+
\d ^* (f \psi)
+
*\d \left( F(\d \eta) \right),
\text{ where }
f=\frac{7}{3} \< \varphi, \d \eta\>
\end{align}
and $F$ refers to the map from \cref{proposition:Theta-estimates}.
Set $\eta_0=0$ and, if $\eta_{j-1} \in \Omega^2(N_t)$ is given, let $\eta_j \in \Omega^2(N_t)$ be such that
\begin{align*}
\Delta \eta_j
=
\d^* \psi
+
\d ^* (f_{j-1} \psi)
+
*\d \left( F(\d \eta_{j-1}) \right),
\text{ where }
f_{j-1}
=
\frac{7}{3} \< \varphi, \d \eta_{j-1}\>,
\end{align*}
and such that $\eta_j \perp \mathcal{K}$.
This is well-defined, i.e. such $\eta_j$ exists, because $\Im \d^* \subset \Im \Delta$ and restricting $\Delta$ to $\mathcal{K}^\perp$ does not change its image by \cref{proposition:approximate-kernel-not-too-big}.
We aim to show by induction that $\|{ \eta_j }_{\mathfrak{X}_t} \leq ct^4$.
For $j=0$ this is true by definition, and we will now derive the estimate for $j>0$.

By definition of $\eta_j$ together with \cref{proposition:x-y-laplace-estimate-N_t} we have that
\begin{align}
\label{equation:X-norm-I-II-III-decomposition}
\begin{split}
\|{
\eta_j
}_{\mathfrak{X}_t}
&\leq
c
\|{
\Delta \eta_j
}_{\mathfrak{Y}_t}
\\
&\leq
c
\left(
\|{
\d^* \psi
}_{\mathfrak{Y}_t}
+
\|{
\d ^* (f_{j-1} \psi)
}_{\mathfrak{Y}_t}
+
\|{
*\d \left( F(\d \eta_{j-1}) \right)
}_{\mathfrak{Y}_t}
\right)
\\
&=
c \left(
I+II+III
\right).
\end{split}
\end{align}
By assumption we have
$I=\|{
\d^* \psi
}_{\mathfrak{Y}_t} \leq c' t^4$.

Now to estimate II:
\begin{align*}
\|{
\d ^* (f_{j-1} \psi)
}_{\mathfrak{Y}_t}
\leq
\|{
\d f_{j-1} \lrcorner \psi
}_{\mathfrak{Y}_t}
+
\|{
f_{j-1} \d^* \psi
}_{\mathfrak{Y}_t}
=
II.A+II.B.
\end{align*}
Here
\begin{align*}
II.A
&=
\|{\rho_t (\d f_{j-1} \lrcorner \psi)}_{C^{0,\alpha}_{\beta-2;t}}
+
t^{-3/2}
\|{\pi_t (\d f_{j-1} \lrcorner \psi)}_{C^{0,\alpha}}
\\
&\leq
(t^{-\alpha}+t^{-3/2-\alpha+\beta})
\|{\d f_{j-1} \lrcorner \psi}_{C^{0,\alpha}_{\beta-2;t}}
\\
&\leq
(t^{-\alpha}+t^{-3/2-\alpha+\beta})
\|{\d f_{j-1}}_{C^{0,\alpha}_{\beta-2;t}}
\|{\psi}_{C^{0,\alpha}_{0;t}}
\\
&\leq
c t^4,
\end{align*}
where for the first estimate we used \cref{proposition:iota-operator-norm-estimate,proposition:pi-operator-norm-estimate}, and for the last estimate we used the induction hypothesis $\|{\eta_{j-1}}_{\mathfrak{X}_t} \leq ct^4$, which implies $\|{\d f_{j-1}}_{C^{0,\alpha}_{\beta-2;t}} \leq ct^{7/2}$, together with the assumption $\|{\psi}_{C^{0,\alpha}_{0,0;t}} \leq c'' t^4$.
The estimate $II.B \leq ct^4$ is derived analogously.

It remains to estimate III:
\[
III
=
\|{\rho_t(*\d \left( F(\d \eta_{j-1}) \right)}
_{C^{0,\alpha}_{\beta-2;t}}
+
t^{-3/2}\|{\pi_t(*\d \left( F(\d \eta_{j-1}) \right)}
_{C^{0,\alpha}}
=
III.A+III.B.
\]
The summand III.A is estimated as
\begin{align*}
III.A
&\leq
ct^{-\alpha}
\|{
*\d \left( F(\d \eta_{j-1}) \right)
}_{C^{0,\alpha}_{\beta-2;t}},
\end{align*}
where we first estimate the $L^\infty$-part of the $C^{0,\alpha}$-norm.
Namely, by \cref{proposition:Theta-estimates}:
\begin{align*}
\|{
*\d \left( F(\d \eta_{j-1}) \right)
}_{L^\infty_{\beta-2;t}}
&\leq
c
\|{
\d \eta_{j-1}
}_{L^\infty_{\beta-1;t}}
\|{
\nabla \d \eta_{j-1}
}_{L^\infty_{\beta-2;t}}
t^{-1+\beta}
\\
&\quad
+
c
\|{
\d \eta_{j-1}
}_{L^\infty_{\beta-1;t}}^2
\|{
\d^* \psi
}_{L^\infty_{\beta-2;t}}
t^{-2+2\beta}
\\
&\leq
ct^4.
\end{align*}
The $[\cdot]_{C^{0,\alpha}}$-part is estimated analogously.
To estimate $
III.B=
t^{-3/2}
\|{
\pi_t
\left( *\d \left( F(\d \eta_{j-1}) \right) \right)
}_{C^{0,\alpha}}$, we again estimate the $L^\infty$-part first.
Fix some $y \in L$ and compute 
$\pi_t
\left( *\d \left( F(\d \eta_{j-1}) \right) \right)(y)$
by computing an integral over ${\XEH} \simeq \{y\} \times {\XEH} \subset L \times {\XEH}$.
By \cref{proposition:Theta-estimates} we have
\begin{align*}
\left|
\pi_t
\left( *\d \left( F(\d \eta_{j-1}) \right) \right)
\right|
&\leq
\left|
\<
*\d \left( F(\d \eta_{j-1}) \right),
\chi_t \nu
\>_{t^2g_{\XEH}}
\right|
\\
&\leq
c
\underbrace{
\int_{\XEH}
|\d \eta_{j-1}| \cdot |\nabla \d \eta_{j-1}|
\cdot
|\chi_t \nu|
\vol_{t^2g_{\XEH}}
}_{III.B.1}
\\
&\quad
+
c
\underbrace{
\int_{\XEH}
|\d \eta_{j-1}| \cdot |\d \eta_{j-1}| \cdot |\d^*\psi|
\cdot
|\chi_t \nu|
\vol_{t^2g_{\XEH}}
}_{III.B.2}.
\end{align*}
Here,
\begin{align*}
III.B.1
\cdot
t^{3/2}
&=
c
\int_{\XEH}
|\d \,(\overline{\pi}_t \eta_{j-1}+\rho_t \eta_{j-1})| 
\cdot 
|\nabla \d \, (\overline{\pi}_t \eta_{j-1}+\rho_t \eta_{j-1})|
\cdot
|\chi_t \nu|
\vol_{t^2g_{\XEH}}
\\
&\leq
c
\int_0^{\zeta}
\Big(
(t+r)^{-7}
\underbrace{
\|{\d \overline{\pi}_t \eta_{j-1}}_{C^{0,\alpha}_{-3;t}}
\|{\nabla \d \overline{\pi}_t \eta_{j-1}}_{C^{0,\alpha}_{-4;t}}
}_{
\leq
c
\|{\pi_t \eta_{j-1}}_{C^{2,\alpha}}^2
\leq
c
t^{2\cdot(4+3/2)}
}
\Big)
\left(
(t+r)^{-4}
t^2
\right)
r^3
\d r
\\
&\quad +
c
\int_0^{\zeta}
\Big(
(t+r)^{2\beta-3}
\underbrace{
\|{\d \rho_t \eta_{j-1}}_{C^{0,\alpha}_{\beta-1;t}}
\|{\nabla \d \rho_t \eta_{j-1}}_{C^{0,\alpha}_{\beta-2;t}}
}_{
\leq
c
\|{\rho_t \eta_{j-1}}_{C^{2,\alpha}_{\beta;t}}^2
\leq
c
t^{2 \cdot 4}
}
\Big)
\Big(
(t+r)^{-4}
t^2
\Big)
r^3
\d r
\\
&\quad
+
c
\int_0^{\zeta}
\Big(
(t+r)^{\beta-5}
\underbrace{
\|{\d \overline{\pi}_t \eta_{j-1}}_{C^{0,\alpha}_{-3;t}}
\|{\nabla \d \rho_t \eta_{j-1}}_{C^{0,\alpha}_{\beta-2;t}}
}_{
\leq
c
\|{\pi_t \eta_{j-1}}_{C^{2,\alpha}}
\|{\rho_t \eta_{j-1}}_{C^{2,\alpha}_{\beta;t}}
\leq
c
t^{4+3/2+4}
}
\Big)
\left(
(t+r)^{-4}
t^2
\right)
r^3
\d r
\\
&\quad
+
c
\int_0^{\zeta}
\Big(
(t+r)^{\beta-5}
\underbrace{
\|{\d \rho_t \eta_{j-1}}_{C^{0,\alpha}_{\beta-1;t}}
\|{\nabla \d \overline{\pi}_t \eta_{j-1}}_{C^{0,\alpha}_{-4;t}}
}_{
\leq
c
\|{\pi_t \eta_{j-1}}_{C^{2,\alpha}}
\|{\rho_t \eta_{j-1}}_{C^{2,\alpha}_{\beta;t}}
\leq
c
t^{4+3/2+4}
}
\Big)
\left(
(t+r)^{-4}
t^2
\right)
r^3
\d r
\\
&\leq
c
\left(
t^{2 \cdot(4+3/2)}t^{-7}t^{2}
+
t^{2 \cdot 4}t^{2\beta-3}t^{2}
+
2t^{4+3/2+4}t^{\beta-5}t^{2}
\right)
\\
&\leq
ct^{6},
\end{align*}
thus $III.B.1 \leq ct^4$.
The part $III.B.2$ and the $C^{0,\alpha}$-parts of $III.B.1$ and $III.B.2$ are estimated analogously.
Altogether, this gives $III \leq ct^4$ and therefore $\|{\eta_j}_{\mathfrak{X}_t}\leq ct^4$ by \cref{equation:X-norm-I-II-III-decomposition}.
It remains to show that the sequence $\eta_j$ has a limit, which will then turn out to be a solution to \cref{equation:joyce-slice-theorem-torsion-free-pde}.

The sequence $\eta_j$ satisfies
\begin{align*}
\|{\eta_j}_{C^{2,\alpha}_{\beta;t}}
&\leq
\|{\rho_t \eta_j}_{C^{2,\alpha}_{\beta;t}}
+
\|{\overline{\pi}_t \eta_j}_{C^{2,\alpha}_{\beta;t}}
\\
&\leq
\|{\eta_j}_{\mathfrak{X}_t}
+
t^{-2-\beta+3/2}
\|{\eta_j}_{\mathfrak{X}_t}
\\
&\leq
ct^{7/2-\beta}.
\end{align*}
As usual, the constant $c$ is independent of $t$, but in particular independent of $j$.
Thus, there exists, up to a subsequence, a $C^{2,\alpha/2}$-limit $\lim_{j \rightarrow \infty} \eta_j =: \eta$ by the Arzelà–Ascoli theorem.
This limit solves \cref{equation:joyce-slice-theorem-torsion-free-pde} and satisfies
\begin{align*}
\|{\eta}_{C^{2,\alpha/2}_{\beta;t}}
&\leq
ct^{7/2-\beta}.
\end{align*}
By \cref{proposition:iota-operator-norm-estimate,proposition:pi-operator-norm-estimate} we have that $\eta_j \rightarrow \eta$ also with respect to the norm $\|{\rho_t(\cdot)}_{C^{2,\alpha/2}_{\beta;t}}+t^{-3/2}\|{\pi_t (\cdot)}_{C^{2,\alpha/2}}$.
Thus, taking the limit $j \rightarrow \infty$ on both sides of
\begin{align*}
    \|{\rho_t(\eta_j)}_{C^{2,\alpha/2}_{\beta;t}}+t^{-3/2}\|{\pi_t (\eta_j)}_{C^{2,\alpha/2}}
    \leq
    c
    \|{\rho_t(\eta_j)}_{C^{2,\alpha}_{\beta;t}}+t^{-3/2}\|{\pi_t (\eta_j)}_{C^{2,\alpha}}
    =
    c\|{\eta_j}_{\mathfrak{X}_t}
    \leq
    ct^4
\end{align*}
yields the claimed $\|{\rho_t \eta}_{C^{2,\alpha/2}_{\beta;t}}+t^{-3/2}\|{\pi_t \eta}_{C^{2,\alpha/2}} \leq ct^4$.

By \cite[Theorem 10.3.7]{Joyce2000}, $\varphi+\d \eta$ is a torsion-free $G_2$-structure, which proves the claim.
\end{proof}

Taking everything together, this gives us:

\begin{theorem}
\label{corollary:kummer-construction-simplified-torsion-free-estimate}
Let $N_t$ be the resolution of $T^7/\Gamma$ from \cref{equation:resolution-of-t7-gamma} and $\varphi^t \in \Omega^3(N_t)$ the $G_2$-structure with small torsion from \cref{equation:g2-structure-on-kummer-construction}.
There exists $c>0$ independent of $t$ such that the following is true:
for $t$ small enough, there exists $\eta^t \in \Omega^2(N_t)$ such that $\tilde{\varphi}=\varphi^t+\d \eta^t$ is a torsion-free $G_2$-structure, and $\eta^t$ satisfies
\[
 \|{
  \eta^t
 }_{C^{2,\alpha/2}_{\beta;t}}
 \leq
 ct^{7/2-\beta}.
\]
In particular,
\[
\|{\tilde{\varphi}-\varphi^t}_{L^\infty} \leq ct^{5/2} \text{ and }
\|{\tilde{\varphi}-\varphi^t}_{C^{0,\alpha/2}} \leq ct^{5/2-\alpha/2} \text{ as well as }
\|{\tilde{\varphi}-\varphi^t}_{C^{1,\alpha/2}} \leq ct^{3/2-\alpha/2}.
\]
\end{theorem}

\begin{proof}
By \cref{lemma:kummer-construction-pregluing-estimate}, we have that $\|{ \psi}_{C^{0,\alpha}_{0;t}} \leq ct^4$.
Combined with \cref{proposition:pi-operator-norm-estimate,proposition:iota-operator-norm-estimate}, we also have
$\|{ \psi}_{\mathfrak{Y}_t} \leq ct^4$.
Thus, \cref{theorem:torsion-free-final-existence-theorem} can be applied, which gives the existence of $\eta^t \in \Omega^2(N_t)$ such that $\tilde{\varphi}=\varphi^t+\d \eta^t$ is a torsion-free $G_2$-structure and the estimate
\begin{align*}
 \|{
  \eta^t
 }_{C^{2,\alpha/2}_{\beta;t}}
 &\leq
 \|{\rho_t \eta^t}_{C^{2,\alpha/2}_{\beta;t}}
 +
 \|{\overline{\pi}_t \eta^t}_{C^{2,\alpha/2}_{\beta;t}}
 \\
 &\leq
 c
 \left(
 \|{\rho_t \eta^t}_{C^{2,\alpha/2}_{\beta;t}}
 +
 t^{-2-\beta}\|{\pi_t \eta^t}_{C^{2,\alpha/2}_{\beta;t}}
 \right)
 \\
 &\leq
 ct^{7/2-\beta},
\end{align*}
where we used $\rho_t+\overline{\pi}_t=1$ in the first step, we used \cref{proposition:iota-operator-norm-estimate} in the second step, and we used the estimate from \cref{theorem:torsion-free-final-existence-theorem} in the last step.
This implies the following estimate for the \emph{unweighted} $L^\infty$-norm:
\begin{align*}
    \|{\tilde{\varphi}-\varphi^t}_{L^\infty}
    \leq
    \|{\nabla \eta_t}_{L^\infty}
    \leq
    c\|{\nabla \eta_t}_{L^\infty_{\beta-1;t}} t^{\beta-1}
    \leq
    c t^{7/2-\beta} t^{\beta-1}
    =
    c t^{5/2}.
\end{align*}
The estimates for the unweighted Hölder norms follow analogously.
\end{proof}

\begin{remark}
 The power $7/2-\beta$ in \cref{corollary:kummer-construction-simplified-torsion-free-estimate} can be improved to $4-\epsilon$ for any $\epsilon \in (0,1)$ by defining the norms $\|{\cdot}_{\mathfrak{X}_t}$ and $\|{\cdot}_{\mathfrak{Y}_t}$ with a factor of $t^{-\kappa}$ instead of $t^{-3/2}$ for $\kappa \in (0,2)$ close to $2$.
\end{remark}

\begin{remark}
\label{remark:additional-g2-correction}
In \cite{Joyce1996a}, compact manifolds with holonomy $\Spin(7)$ were constructed.
 In the simplest case, one constructs $\Spin(7)$-structures with small torsion by gluing together the product $\Spin(7)$-structure on $T^4 \times {\XEH}$ to the flat $\Spin(7)$-structure on $T^8$.
 This gluing construction is analogous to the definition of the $G_2$-structure in \cref{equation:g2-structure-on-kummer-construction}.
 In contrast to the $G_2$-situation, however, Joyce's theorem about the existence of torsion-free $\Spin(7)$-structures cannot immediately be applied, because the torsion of the glued structure is too big.
 He overcame this problem by constructing a correction of the glued structure by hand which has smaller torsion, to which the existence theorem can be applied.
 The same can be done in the $G_2$ case.
 In fact, one gets a correction in the $G_2$-case from the $\Spin(7)$-case by considering the $\Spin(7)$-orbifold $T^7/\Gamma \times S^1$.
 Using this corrected structure, one would get even better control over the difference between glued structure and torsion-free structure than what is known from \cref{corollary:kummer-construction-simplified-torsion-free-estimate}.
\end{remark}

\section{Proof of the linear estimate \cref{proposition:x-y-laplace-estimate-N_t}}
\label{section:linear-estimate-proof}

This section covers the proof of \cref{proposition:x-y-laplace-estimate-N_t}, which is an estimate for the inverse of the Laplace operator on $N_t$.

One datum appearing in the linear estimate is the approximate kernel $\mathcal{K}$ defined in \cref{definition:approximate-kernel}.
It would be very easy to prove a linear estimate on $\mathcal{K}^\perp$ if one was allowed to take a very large $\mathcal{K}$.
Thus, it is important to check that our chosen $\mathcal{K}$ is not \emph{too large}.
More precisely, this means that the image of $\Delta$ does not become smaller when restricting to $\mathcal{K}^\perp$.
This check is carried out in \cref{subsection:approximate-kernel}.

The estimate is formulated in terms of the composite norms $\|{\cdot}_{\mathfrak{X}_t}$ and $\|{\cdot}_{\mathfrak{Y}_t}$.
In \cref{subsection:estimates-for-composite-norms} we prove some basic estimates for the auxiliary functions defining these norms.

To prove an estimate for the Laplacian on $N_t$, we combine two facts:
roughly speaking, we first prove that the Laplacian on $N_t$ on $2$-forms that are harmonic in the $\XEH$-direction can be identified with the Laplacian on functions on $L$, and we know that its kernel are exactly the locally constant functions on $L$.
This is done in \cref{subsection:comparison-with-laplacian-on-L}.
Second, prove that the Laplacian on $N_t$ satisfies an injectivity estimate, modulo $2$-forms that are harmonic in the $\XEH$-direction.
This is easy to prove, because we do not consider the very large space of $2$-forms that are harmonic in the $\XEH$-direction and therefore harder to analyse.
This is done in \cref{subsection:model-operator-on-R3-times-XEH}.

Combining both, we have an injectivity estimate for the Laplacian on $N_t$ on all $2$-forms:
those which are harmonic in the $\XEH$-direction as well as those which are not.
Actually concluding the proof in this fashion requires a small amount of extra work.
The words \emph{harmonic in the $\XEH$-direction} have a precise meaning on the space $L \times \XEH$, but on $N_t$ there are only approximate such forms, due to various cut-offs performed.
Thus, in order to prove the injectivity estimate of the Laplacian on $N_t$, we must estimate how these cut-offs interact with the Laplace operator on $N_t$ and $L$ respectively.
This is achieved in the fairly technical section \cref{subsection:cross-term-estimates}.

The proof of the injectivity estimate \cref{proposition:x-y-laplace-estimate-N_t} is then easily obtained by combining the previous estimates, which is done in the very short section \cref{subsection:proof-of-inectivity-estimate}.

\subsection{The approximate kernel}
\label{subsection:approximate-kernel}

The linear estimate \cref{proposition:x-y-laplace-estimate-N_t} only holds perpendicular to the approximate kernel defined in \cref{definition:approximate-kernel}.
The following proposition states that by restricting to the orthogonal complement of $\mathcal{K}$ we are not forgetting about any important $2$-forms --- the image of the Laplacian remains the same when restricted to this orthogonal complement.

\begin{proposition}
\label{proposition:approximate-kernel-not-too-big}
The operator
\[
\Delta:
\mathcal{K}^\perp
\rightarrow
\Im \Delta
\]
is surjective, where $\Im \Delta$ denotes the image of the Laplacian on all of $\Omega^2(N_t)$.
\end{proposition}

\begin{proof}
 \textbf{Step 1:}
 Show that the $L^2$-orthogonal projection $q: \Ker \Delta_{N_t} \rightarrow \mathcal{K}$ is an isomorphism.
 
 Assume there exists $0 \neq a \in \Omega^2(N_t)$ with $\Delta a=0$ such that $q(a)=0$, i.e. $a \perp \mathcal{K}$.
 Then $\Delta a \neq 0$ by \cref{proposition:x-y-laplace-estimate-N_t}, which is a contradiction.
 Now note $\dim (\Ker \Delta_{N_t})=b^0(L)+b^2(T^7/\Gamma)=12+k$, which is proved using the Künneth formula (see \cite[Proposition 6.1]{Joyce2017}).
 By construction, $\dim(\mathcal{K})=12+k$, so $q$ is a surjective linear map between vector spaces of the same dimension, and therefore injective.
 
 \textbf{Step 2:}
 Check 
 $\Im \left( \Delta|_{\mathcal{K}^\perp} \right)
 =
 \Im \Delta$.
 
 It suffices to check that $\Im \Delta \subset \Im \left( \Delta|_{\mathcal{K}^\perp} \right)$.
 Let $y \in \Im \Delta$, and $\Delta x=y$.
 Denote the $L^2$-orthogonal projection onto $\mathcal{K}$ by $\proj_{\mathcal{K}}$.
 Let
 \begin{align*}
  z
  :=
  q^{-1}(\proj_{\mathcal{K}}(-x)).
 \end{align*}
 Then $\Delta(x+z)=y$, and $\proj_{\mathcal{K}}(x+z)=0$ because of $\proj_{\mathcal{K}} \circ q^{-1}=\Id$, i.e. $x+z \perp \mathcal{K}$ which completes the proof.
\end{proof}

\subsection{Estimates for the composite norms}
\label{subsection:estimates-for-composite-norms}

In \cref{definition:x-y-norms} we defined the composite norms $\|{\cdot}_{\mathfrak{X}_t}$ and $\|{\cdot}_{\mathfrak{Y}_t}$.
These make use of two auxiliary functions:
roughly speaking, the map $\iota_t$ from \cref{equation:iota} that takes a function on $L$ and maps it to a $2$-form on $L \times \XEH$ which is harmonic in the $\XEH$-direction;
and the map $\pi_t$ from \cref{equation:def-pi} that is the converse.
In this subsection we prove some basic estimates for these two maps that will frequently be used throughout the rest of the section.

\begin{proposition}
\label{proposition:iota-operator-norm-estimate}
For all $k \in \N$ and $\beta > -4$ there exists $c > 0$ independent of $t$ such that for all $g \in \Omega^0(L)$ we have that
\begin{align}
\|{\iota_t g}_{C^{k,\alpha}_{\beta;t}}
\leq
c
t^{-2-\beta}
\|{g}_{C^{k,\alpha}}.
\end{align}
\end{proposition}

\begin{proof}
For the $L^\infty$-norm we have that
\begin{align*}
\|{
p_L^*g \cdot p_{\XEH}^*\nu
}_{L^\infty_{-4;t},g_{N_t}}
&\leq
c
\|{
p_L^*g \cdot p_{\XEH}^*\nu \cdot (t+t \check{r})^4
}_{L^\infty,g_{\R^3} \oplus t^2g_{\XEH}}
\\
&\leq
c
\|{
p_L^*g \cdot p_{\XEH}^*\nu \cdot (1+\check{r})^4 t^4 t^{-2}
}_{L^\infty,g_{\R^3} \oplus g_{\XEH}}
\\
&\leq
c t^2
\|{
p_L^*g
}_{L^\infty}
\end{align*}
where we used that $\nu = \mathcal{O}(\check{r}^{-4})$ and therefore
\begin{align}
\label{equation:nu-weighted-bound}
\|{\nu \cdot \check{r}^4}_{L^\infty,g_{\XEH}} \leq c,
\end{align}
in the last step.
For $\beta > -4$ we have that $\|{\chi_t}_{L^\infty_{4-\beta}} \leq ct^{-4-\beta}$, which proves the claim for the weighted $L^\infty$-norm.
The proof for higher derivatives is analogous.
\end{proof}

\begin{proposition}
\label{proposition:pi-operator-norm-estimate}
For all $k \in \N, \beta < 0$ there exists $c>0$ independent of $t$ such that for all $a \in \Omega^2(N_t)$ we have that
\begin{align}
\|{\pi_t a}_{C^{k,\alpha}}
\leq
t^{2+\beta-\alpha-k}
\|{a}_{C^{k,\alpha}_{\beta;t}}.
\end{align}
\end{proposition}

\begin{proof}
We first estimate the $L^\infty$-part, i.e. $\|{\pi_t a}_{L^\infty}$.
To this end
\begin{align*}
|\pi_t a(x)|
&\leq
\int_
{\{x \in \XEH: \check{r}(x) \leq t^{-1}\zeta\}}
|a|_{t^2 g_{\XEH}} \cdot |\nu |_{t^2 g_{\XEH}}
\vol _{t^2 g_{\XEH}}
\\
&\leq
t^2
\|{a}_{L^\infty_{\beta;t}}
\int_
{\XEH}
(t+\check{r}t)^\beta \cdot |\nu |_{g_{\XEH}}
\vol _{g_{\XEH}}
\\
&\leq
c
t^{2+\beta}
\|{a}_{L^\infty_{\beta;t}}
\int_
{\XEH}
(1+\check{r})^\beta \cdot (1+\check{r})^{-4}
\vol _{g_{\XEH}}
\\
&\leq
c
t^{2+\beta}
\|{a}_{L^\infty_{\beta;t}}
\underbrace{
\int_0^\infty
(1+\check{r})^{-4+\beta} \cdot \check{r}^3
\d \check{r}
}_{\leq c}
\\
&\leq
ct^{2+\beta}
\|{a}_{L^\infty_{\beta;t}},
\end{align*}
where in the second step we used the definition of $\|{\cdot }_{L^\infty_{\beta;t}}$ and switched from measuring in $t^2g_{\XEH}$ to measuring in $g_{\XEH}$ which introduces the factor of $t^2$;
in the third step we used
$|\nu |_{g_{\XEH}} 
\leq c(1+\check{r})^{-4}$;
in the fourth step we used polar coordinates to switch from integrating over ${\XEH}$ to integrating over $[0,\infty)$.
The estimates for the Hölder norm, derivatives, and for other weights are proved analogously.
\end{proof}

\subsection{Comparison with the Laplacian on $L$}
\label{subsection:comparison-with-laplacian-on-L}

The embedding $\iota_t: \Omega^0(L) \rightarrow \Omega^2(N_t)$ is defined in \cref{equation:iota} using a cut-off and rescaled version of $\nu \in \Omega^2({\XEH})$.
If not for this cut-off, we would have that $\Delta \iota_t=\iota_t \Delta$, where we use the symbol $\Delta$ to denote the Laplacian on $N_t$ as well as the Laplacian on $L$.
In our actual situation, we still have that $\Delta$ and $\iota_t$ nearly commute, and that is the content of the following proposition.

\begin{proposition}
\label{proposition:Delta-iota-commute}
For any $\beta \leq 0$ there exists $c>0$ independent of $t$ such that for all $g \in \Omega^0(L)$ we have
\begin{align}
\|{( \Delta \iota_t - \iota_t \Delta )g}_{C^{0,\alpha}_{\beta-2;t}}
\leq
c
t^2
\|{g}_{C^{2,\alpha}}.
\end{align}
\end{proposition}

\begin{proof}
Define the map $\tilde{\iota}_t:\Omega^0(L) \rightarrow \Omega^2(T^3 \times {\XEH})$ via $\tilde{\iota}_t(g)=p_L^* g \cdot p_{{\XEH}}^* \overline{\nu}$, where $\overline{\nu} \in \Omega^2({\XEH})$ is harmonic and has unit $L^2$-norm with respect to $g_{\XEH}$.
Then
\begin{align}
\label{equation:delta-iota-tilde-commute}
(\Delta \tilde{\iota}_t-\tilde{\iota}_t \Delta)g=0.
\end{align}
We aim to estimate
\begin{align*}
(\Delta \iota_t -\iota_t \Delta)g
&=
\underbrace{
(\Delta \iota_t - \Delta \tilde{\iota}_t)g
}_{=:I}
+
\underbrace{
(\Delta \tilde{\iota}_t-\tilde{\iota}_t \Delta)g
}_{=:II}
+
\underbrace{
(\tilde{\iota}_t \Delta-\iota_t \Delta)g
}_{=:III}.
\end{align*}
We begin by estimating I, where it will be convenient to estimate on two regions separately:
\begin{align}
\label{equation:omega-regions}
\begin{split}
\Omega_1
&:=
\{x \in L \times {\XEH}: \check{r}(x) \leq t^{-1}\zeta/8\},
\\
\Omega_2
&:=
\{x \in L \times {\XEH}: t^{-1}\zeta/8 \leq \check{r} (x) \leq t^{-1}\zeta/4\}.
\end{split}
\end{align}
Then
\begin{align*}
\|{I}_{C^{0,\alpha}_{\beta-2;t}}
&\leq
\|{
(\iota_t-\tilde{\iota}_t)g
}_{C^{2,\alpha}_{\beta;t}}
\\
&=
\|{
p_L^*g \cdot p_{\XEH}^*(\chi_t \nu'-\overline{\nu})
}_{C^{2,\alpha}_{\beta;t}}
\\
&\leq
\|{
p_L^*g \cdot p_{\XEH}^*(\chi_t \nu'-\overline{\nu})
}_{C^{2,\alpha}_{\beta;t}(\Omega_1)}
+
\|{
p_L^*g \cdot \chi_t p_{\XEH}^*(\chi_t \nu'-\overline{\nu})
}_{C^{2,\alpha}_{\beta;t}(\Omega_2)}.
\end{align*}

We will estimate the two summands separately.
The first summand is defined on the region $\Omega_1=\{x \in L \times {\XEH}: \check{r}(x) \leq t^{-1}\zeta/8\}$ where $\chi_t \equiv 1$, i.e. $\chi_t \cdot \nu'$ is not cut off.
The form $\overline{\nu}$ is nowhere cut off.
We have that 
\begin{align}
\label{equation:mu-nu-difference}
|\nu'(x)-\overline{\nu}(x)|_{t^2g_{\XEH}} \leq ct^2
\text{ for }
x \in {\XEH}
\text{ with }
\check{r}(x) \leq t^{-1}\zeta/8
\end{align} 
for the following reason:
it is $\< \overline{\nu}, \overline{\nu} \>_{L^2, t^2g_{\XEH}}=1$ by definition, thus
\begin{align*}
1
>
\<
\chi_t \overline{\nu}, \chi_t \overline{\nu} 
\>_{L^2,t^2g_{\XEH}}
&\geq
\<
\overline{\nu}, \overline{\nu}
\>_{L^2,t^2g_{\XEH}}
-
\int_{\{x \in {\XEH}: \check{r}(x) \geq \zeta t^{-1}/8 \}}
|\overline{\nu}|_{t^2g_{\XEH}}^2 \vol_{t^2g_{\XEH}}
\\
&
\geq
1
-
\int_{\zeta t^{-1}/8}^\infty
(1+r)^{-8}r^3 \d r
\geq
1-ct^4.
\end{align*}
If $\check{r}(x) \leq t^{-1}\zeta/8$ we have that
$\nu'(x)=\overline{\nu}(x) / 
\<
\chi_t \overline{\nu}, \chi_t \overline{\nu} 
\>_{L^2,t^2g_{\XEH}}$
because the cut-off of $\nu'$ is applied where $\check{r}(x) > t^{-1}\zeta/8$.
This implies, at the point $x$,
\begin{align*}
|\chi_t \nu'-\overline{\nu}|_{t^2g_{\XEH}} 
&\leq
\left|
\overline{\nu} \left( 1-\frac{1}{\< \chi_t \overline{\nu}, \chi_t \overline{\nu} \>_{L^2,t^2g_{\XEH}}} \right)
\right|_{t^2g_{\XEH}}
\\
&\leq
\left|
\overline{\nu} \cdot \frac{ct^4}{1-ct^4}
\right|_{t^2g_{\XEH}} 
\\
&\leq
t^{-2}
\left|
\overline{\nu} \cdot \frac{ct^4}{1-ct^4}
\right|_{g_{\XEH}}
\\
&\leq
ct^2.
\end{align*}
Using this for our estimate of the first summand of I, we obtain:
\begin{align*}
\|{
p_L^*g \cdot p_{\XEH}^*(\chi_t \nu'-\overline{\nu})
}_{C^{2,\alpha}_{\beta;t}(\Omega_1)}
&\leq
t^2 \|{p_L^*g}_{C^{2,\alpha}_{\beta;t}}
\leq
ct^2
\|{g}_{C^{2,\alpha}}
.
\end{align*}
For the second summand we get:
\begin{align*}
&\quad\|{
p_L^*g \cdot \chi_t p_{\XEH}^*(\chi_t \nu'-\overline{\nu})
}_{C^{2,\alpha}_{\beta;t}(\Omega_2)}
\\
&\leq
\|{p^*_{T^3}g}_{C^{2,\alpha}_{0;t}}
\|{
 \chi_t p_{\XEH}^*(\chi_t \nu'-\overline{\nu})
}_{C^{2,\alpha}_{\beta;t}(\Omega_2)}
\\
&\leq
\|{p^*_{T^3}g}_{C^{2,\alpha}_{0;t}}
\|{
1
}_{C^{2,\alpha}_{\beta+4;t}(\Omega_2)}
\left(
\|{\chi_t}_{C^{2,\alpha}_{0;t}}
\cdot
\|{
 \nu'
}_{C^{2,\alpha}_{-4;t}(\Omega_2)}
+
\|{
 \overline{\nu}
}_{C^{2,\alpha}_{-4;t}(\Omega_2)}
\right)
\\
&\leq
c t^2 \|{g}_{C^{2,\alpha}},
\end{align*}
where in the last step we used
$\|{
1
}_{C^{2,\alpha}_{\beta+4;t}(\Omega_2)} \leq c$,
which holds because far away from $L$, the weight function $w_{\beta+4;t}$ is uniformly bounded.
We also used
\begin{align}
\label{equation:nu-smallness-far-away-estimate}
|\overline{\nu} |_{t^2g_{\XEH}}=t^{-2} |\overline{\nu}|_{g_{\XEH}} \leq ct^{-2}(1+\check{r})^{-4} \leq ct^2 (t+t\check{r})^{-4} \leq ct^2 \text{ on } \Omega_2.
\end{align}
Together with \cref{equation:mu-nu-difference} this shows that $|\chi_t \nu' |_{t^2g_{\XEH}} \leq ct^2$ on $\Omega_2$.

Altogether $\|{I}_{C^{0,\alpha}_{\beta-2;t}} \leq ct^2 \|{g}_{C^{2,\alpha}}$.
Furthermore, $II=0$ because of \cref{equation:delta-iota-tilde-commute}.
Lastly, III is estimated like I, which shows the claim.
\end{proof}

The goal of this section is to prove \cref{proposition:x-y-laplace-estimate-N_t}, which is an estimate for the operator norm of the inverse of the Laplacian with respect to the norms $\|{ \cdot}_{\mathfrak{X}_t}$ and $\|{ \cdot}_{\mathfrak{Y}_t}$.
The purpose of these norms is to essentially split the problem into an estimate on $\Im \pi_t$ and remainder.
The following proposition contains the estimate on $\Im \pi_t$.

Such injectivity estimates can only hold perpendicular to the kernel of the linear operator.
On $L$, the kernel $\Ker \Delta_L$ of the Laplacian acting on functions are precisely the constant functions, and the condition that $g \perp \Ker \Delta_L$ is equivalent to $g$ having mean zero when integrated over $L$.

\begin{proposition}
\label{proposition:laplacian-perturbed-regularity}
There exists $c>0$ independent of $t$ such that for $t$ small enough and for all $g \in \Omega^0(L)$ satisfying $g \perp \Ker \Delta_L$ we have that
\begin{align}
\|{g}_{C^{2,\alpha}}
\leq
c
\|{ \pi_t \Delta \iota_t g}_{C^{0,\alpha}}.
\end{align}
\end{proposition}

\begin{proof}
We have
\begin{align*}
\|{g}_{C^{2,\alpha}}
&\leq
c
\|{\Delta g}_{C^{0,\alpha}}
\\
&=
c
\|{\pi_t \iota_t \Delta g}_{C^{0,\alpha}}
\\
&\leq
c
\|{\pi_t \Delta \iota_t g}_{C^{0,\alpha}}
+
c
\|{\pi_t \Delta \iota_t g-\pi_t \iota_t \Delta g}_{C^{0,\alpha}}
\\
&\leq
c
\|{\pi_t \Delta \iota_t g}_{C^{0,\alpha}}
+
ct^{2-\alpha} \|{g}_{C^{2,\alpha}},
\end{align*}
where we used elliptic regularity for the operator $\Delta$ on $L$ in the first step, and \cref{proposition:Delta-iota-commute,proposition:pi-operator-norm-estimate} in the last step.
At this point, the last summand $ct^{2-\alpha} \|{g}_{C^{2,\alpha}}$ can be absorbed into the left hand side for $t$ small enough.
\end{proof}

\subsection{An estimate on $N_t$ modulo fibrewise harmonic $2$-forms}
\label{subsection:model-operator-on-R3-times-XEH}

Recall the projection $\pi_t$ onto the fibrewise harmonic part from \cref{equation:def-pi} and its complement $\rho_t$.
In the preceding \cref{proposition:laplacian-perturbed-regularity} we essentially proved an estimate for the inverse of the Laplacian on $\Im \pi_t$.
In order to get an estimate with respect to $\|{\cdot}_{\mathfrak{X}_t}$ and $\|{\cdot}_{\mathfrak{Y}_t}$ we need to estimate the inverse of the Laplacian on $\Im \rho_t$.
To this end, we will prove an injectivity estimate for the Laplacian, with an extra term appearing on the right hand side that vanishes on $\Im \rho_t$.
This will be achieved in \cref{equation:rho-estimate-equation} of \cref{proposition:rho-estimate-on-N_t}, where the extra term on the right vanishes because $\pi_t \rho_t =0$.
We had previously defined $\overline{\pi}_t = \iota_t \pi_t$, and it turns out to be more convenient to work with this operator rather than $\pi_t$ directly, which is why it appears in \cref{proposition:rho-estimate-on-N_t}.

In the proof of \cref{proposition:rho-estimate-on-N_t} we will compare forms on $N_t$ with forms on the model space $\R^3 \times \XEH$, and we begin by making this comparison precise.

\begin{definition}
\label{definition:s_beta_t}
 For $j \in \{1, \dots, 12\}$ let $C'_j$ be a connected component of $\hat{U}$, but made slightly smaller, explicitly
 \[
  C'_j :=
  C_j \cap
  \{
   (x_h, x_v) \in L \times {\XEH}:
   \check{r}(x_v) \leq
   t^{-1} \zeta/4
  \}.
 \] 
 For $\beta \in \R$ let
 \begin{align*}
  s_{j,\beta,t}:
  \Omega^2(N_t) & \rightarrow \Omega^2(\R^3 \times \{x \in {\XEH}: \check{r}(x) \leq t^{-1} \zeta/4\})
  \\
  a
  &\mapsto
  t^{-\beta-2}
  ( p_t, \Id)^*
  \left( a|_{C'_j} \right),
 \end{align*}
 where $p_t: \R^3 \rightarrow T^3$, $p(x)=tx \mod \Z^3$ denotes a rescaled quotient map.
\end{definition}

Then:

\begin{lemma}
\label{lemma:s_t-second-lemma}
 For $j \in \{1, \dots, 12\}$, $\beta \in \R$ we have that for all $a \in \Omega^2(\R^3 \times {\XEH})$ we have
 \begin{align*}
  \|{
   s_{j,\beta,t}
   a
  }_{C^{k,\alpha}_{\beta}}
  &=
  \|{
   a
  }_{C^{k,\alpha}_{\beta;t}(C'_j)},
  \text{ and}
  \\
  \left(
  s_{j,\beta-2,t}
  \Delta_{N_t} a
  -
  \Delta_{g_{\R^3} \oplus g_{(1)}} s_{j,\beta,t} a
  \right)|_{C'_j}
  &=
  0.
 \end{align*}
 Here $\Delta_{g_{\R^3} \oplus g_{(1)}}$ denotes the Laplacian on $\R^3 \times {\XEH}$ with respect to the metric $g_{\R^3} \oplus g_{(1)}$.
\end{lemma}

\begin{proof}
 The map $(p_t, \Id): \R^3 \times \{x \in {\XEH}: \check{r}(x) \leq t^{-1} \zeta/4\} \rightarrow C'_j$ pulls back the metric induced by $\varphi^t$ defined in \cref{equation:g2-structure-on-kummer-construction} to the metric $t^2 (g_{\R^3} \oplus g_{(1)})$.
 (That is because rescaling the $\R^3$-direction introduces a factor $t^3$ in front of the summand $\delta_1 \wedge \delta_2 \wedge \delta_3$ from the definition of $\varphi^t$.)
 The extra factor $t^{-\beta-2}$ cancels out the factor $t^2$ when changing the metric from $t^2 (g_{\R^3} \oplus g_{(1)})$ to $g_{\R^3} \oplus g_{(1)}$ on $2$-forms and cancels out the factor $t^\beta$ coming from the definition of $\|{\cdot }_{C^{k,\alpha}_{\beta;t}}$.
\end{proof}

With this comparison between $N_t$ and the model space $\R^3 \times \XEH$ in place, we are ready to prove the following proposition, which is the anticipated injectivity estimate for the Laplacian modulo fibrewise harmonic $2$-forms, i.e. $2$-forms $a \in \Omega^2(N_t)$ in the image of $\rho_t$ from \cref{equation:pi-bar-definition}, which necessarily satisfy $\overline{\pi}_t(a)$ because of $\pi_t \rho_t=0$.
One also has to account for a finite-dimensional kernel of $\Delta$ on $N_t$ coming from harmonic forms on the orbifold $T^7/\Gamma$.
Because of this, we introduce a smaller approximate kernel $\mathcal{K}'$:

\begin{proposition}
\label{proposition:rho-estimate-on-N_t}
Write $\mathcal{K}':= \{ (1-\chi_t)u: u \in \Ker \Delta_{T^7/\Gamma} \} \subset \Omega^2(N_t)$.
Then there exists $c>0$ independent of $t$ such that for $a \in \Omega^2(N_t)$ satisfying $a \perp \mathcal{K}'$ we have
\begin{align}
\label{equation:rho-estimate-equation}
\|{a}_{C^{2,\alpha}_{\beta;t}}
\leq
c \left(
\|{\Delta a}_{C^{0,\alpha}_{\beta-2;t}}
+
\|{\overline{\pi}_t a}_{L^\infty_{\beta;t}}
\right).
\end{align}
\end{proposition}

\begin{proof}
 The Schauder estimate
 \begin{align}
 \label{equation:laplace-schauder-estimate}
  \|{
   a
  }_{C^{2,\alpha}_{\beta;t }}
  &\leq
  c'
  \left(
  \|{
   \Delta a
  }_{C^{0,\alpha}_{\beta-2;t}}
  +
  \|{
   a
  }_{L^\infty_{\beta;t }}
  \right)
 \end{align}
 can be derived as in \cite[Proposition 8.15]{Walpuski2017}.
 It then suffices to show:
\begin{align}
\label{equation:blowup-l-infty-estimate}
\text{there exists $c$ such that }
\|{a}_{L^\infty_{\beta;t}}
\leq
c \left(
\|{\Delta a}_{C^{0,\alpha}_{\beta-2;t}}
+
\|{\overline{\pi}_t a}_{L^\infty_{\beta;t}}
\right)
\text{ for all }
a \perp \mathcal{K}'.
\end{align}
 Assume \cref{equation:blowup-l-infty-estimate} is false, then there exist $t_i \rightarrow 0$, $a_i \in \Omega^2(N_{t_i})$ satisfying $a_i \perp \mathcal{K}'$, and $x_i \in N_{t_i}$ such that
 \begin{align}
  \|{a}_{L^\infty_{\beta;t_i}} \leq c,
  \quad
  \left|
   w_{t_i}^\beta(x_i) a_i(x_i)
  \right|
  =1,
  \text{ and }
  \|{
   \Delta a_i
  }_{C^{0,\alpha}_{\beta-2;t_i}}
  \rightarrow 0,
  \quad
  \|{
   \overline{\pi}_{t_i} a_i
  }_{L^\infty_{\beta;t_i}}
  \rightarrow 0
    .
 \end{align}
 Together with \cref{equation:laplace-schauder-estimate} this implies: $\|{
   a
  }_{C^{2,\alpha}_{\beta;t_i }}
  \leq
  c$
 from \cref{equation:laplace-schauder-estimate}.
 Without loss of generality we can assume to be in one of three following cases, and we will arrive at a contradiction in each of them.
 
 \textbf{Case 1:}
 the sequence $x_i$ concentrates on one ALE space, i.e. $t_i^{-1} r_{t_i}(x_i) \rightarrow R < \infty$ (see \cref{figure:blow-up-1}).
 
 \begin{figure}[htbp]
  \begin{center}
   \includegraphics[width=10cm]{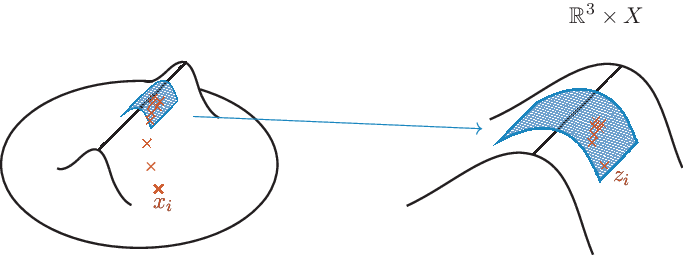}
  \end{center}
  \caption{Blowup analysis near the associative is reduced to the analysis of the Laplacian on $\R^3 \times {\XEH}$. The figure is taken from \cite{Platt2024}.}
  \label{figure:blow-up-1}
 \end{figure}
 
 By passing to a subsequence and translating in the $\R^3$-direction if necessary, we can assume that $x_i$ concentrates near one fixed connected component of $L$.
 Let $C_j \subset L \times {\XEH}$ be the connected component $\hat{U}$ containing an accumulation point of the sequence $x_i$.
 Define $\tilde{a}_i := s_{j,\beta,t} a_i \in \Omega^2(\R^3 \times \{ x \in {\XEH}: \check{r}(x) \leq t_i^{-1} \zeta/4 \})$ and let $\tilde{x}_i$ be a lift from $C_j$ to $\R^3 \times {\XEH}$.
 The new $2$-form $\tilde{a}_i$ then satisfies
 \begin{align*}
  \|{
   \tilde{a}_i
  }_{C^{2,\alpha}_{\beta}}
  \leq
  c,
  (1+\check{r}(\tilde{x}_i))^{-\beta}
  \left|
   \tilde{a}_i(\tilde{x}_i)
  \right|
  \geq
  c,
  \text{ and }
  \|{ \Delta \tilde{a}_i}_{C^{0,\alpha}_{\beta-2}} \rightarrow 0,
 \end{align*}
 which follows from \cref{lemma:s_t-second-lemma}.
 Now the weight function no longer has $t_i$ in it and distances and tensors are measured using the metric $g_{\R^3} \oplus g_{(1)}$.
 
 By the assumption of case 1, we have $\check{r}(\tilde{x}_i) \rightarrow R < \infty$.
 By passing to a subsequence we can assume that $\tilde{x}_i$ converges, so write $x^* := \lim_{i \rightarrow \infty} \tilde{x}_i \in \R^3 \times {\XEH}$.
 Using the Arzelà-Ascoli theorem and a diagonal argument, we can extract a limit $a^* \in \Omega^2(\R^3 \times {\XEH})$ of the sequence $\tilde{a}_i$ satisfying:
 \begin{align}
 \label{equation:norm-of-a*}
  \|{
   a^*
  }_{L^\infty_{\beta}}
  \leq
  c,
  \text{ and }
  \\
  \label{equation:a*-in-kernel-of-laplacian}
  \Delta_{g_{\R^3} \oplus g_{(1)}} \, a^* =0,
  \text{ and }
  \\
  \label{equation:a*-nonzero}
  (1+\check{r}(x^*))^{-\beta}
  \left|
   a^*(x^*)
  \right|
  \geq c.
 \end{align}
 By \cref{lemma:laplacian-independent-of-r-3-direction} (applied to the case $\R^3 \times {\XEH}$), we have that $a^*$ is independent of the $\R^3$-direction.
 By \cref{proposition:hoelder-kernel-of-laplacian}, the only harmonic forms on ${\XEH}$ that decay like $\check{r}^\beta$ are multiples of $\nu$.
 Thus $a^*$ is the pullback of a multiple of $\nu$ under the projection $p_{\XEH}: \R^3 \times {\XEH} \rightarrow {\XEH}$.
 
 Because $\|{
   \overline{\pi}_{t_i} a_i
  }_{L^\infty_{\beta;t_i}}
  \rightarrow 0$, we have that $a^*$ is perpendicular to $\overline{\nu}$ on every $\{y \} \times {\XEH} \subset \R^3 \times {\XEH}$.
  Here is how to see this in detail:
  let $y \in L$, then we calculate on $\{y\} \times {\XEH}$:
  \begin{align}
  \label{equation:limit-orthogonal}
   \<
    a^*, \overline{\nu}
   \>
   =
   \<
    a^*, \overline{\nu}-\chi_t \nu 
   \>
   +
   \<
    a^*-\tilde{a}_i, \chi_t \nu 
   \>
   +
   \<
    \tilde{a}_i, \chi_t \nu 
   \>
   =
   I+II+III.
  \end{align}
  Here,
  \begin{align*}
   | I |
   &\leq
   \left|
   \<
    a^*, \overline{\nu}-\chi_t \nu
   \>_{\{x \in {\XEH}: \check{r}(x) \leq t^{-1}\zeta/8 \}}
   \right|
   +
   \left|
   \<
    a^*, \overline{\nu}-\chi_t \nu
   \>_{\{x \in {\XEH}: \check{r}(x) \geq t^{-1}\zeta/8 \}}
   \right|,
  \end{align*}
  where we have for the first summand
  \begin{align*}
   \left|
   \<
    a^*, \overline{\nu}-\chi_t \nu
   \>_{\{x \in {\XEH}: \check{r}(x) \leq t^{-1}\zeta/8 \}}
   \right|
   &\leq
   \int_0^{t^{-1} \zeta/8}
   |a^*|_{g_{(1)}}
   \cdot
   |\overline{\nu}-\chi_t \nu|_{g_{(1)}} r^3 \d r
   \\
   &\leq
   c
   \int_0^{t^{-1} \zeta/8}
   r^\beta t^4 r^3 \d r
   \leq
   ct^{-\beta}
   \rightarrow 0.
  \end{align*}
  Here we used \cref{equation:norm-of-a*} and \cref{equation:mu-nu-difference} (after changing from $|\cdot |_{t^2 g_{\XEH}}$ to $|\cdot |_{g_{\XEH}}$) in the second step.
  For the second summand we find
  \begin{align*}
   \left|
   \<
    a^*, \overline{\nu}-\chi_t \nu
   \>_{\{x \in {\XEH}: \check{r}(x) \geq t^{-1}\zeta/8 \}}
   \right|
   &\leq
   c
   \int_{\zeta/8 t^{-1}}^\infty
   r^\beta r^{-4} r^3 \d r
   \leq
   ct^{-\beta} \rightarrow 0,
  \end{align*}
  where we used $\overline{\nu}=\mathcal{O}(\check{r}^{-4})$ and $\nu=\mathcal{O}(\check{r}^{-4})$ in the first step.
  
  In order to estimate $II$, let $l>0$.
  Then
  \begin{align*}
   |II|
   &\leq
   \left|
   \<
    a^*-\tilde{a}_i, \chi_t \nu
   \>_{\{x \in {\XEH}: \check{r}(x) \geq l \}}
   \right|
   +
   \left|
   \<
    a^*-\tilde{a}_i, \chi_t \nu
   \>_{\{x \in {\XEH}: \check{r}(x) \leq l \}}
   \right|,   
  \end{align*}
  and we find for the first summand
  \begin{align*}
   \left|
   \<
    a^*-\tilde{a}_i, \chi_t \nu
   \>_{\{x \in {\XEH}: \check{r}(x) \geq l \}}
   \right|
   &\leq
   c
   \left(
   \|{a^*}_{L^\infty_\beta}
   +
   \|{\tilde{a}_i}_{L^\infty_\beta}
   \right)
   \int_l^\infty
   r^{\beta-4+3} \d r
   \leq
   cl^\beta  
  \end{align*}
  for a constant $c$ independent of $l$.
  For the second summand we have
  \begin{align*}
   \left|
   \<
    a^*-\tilde{a}_i, \chi_t \nu
   \>_{\{x \in {\XEH}: \check{r}(x) \leq l \}}
   \right|
   &\leq
   \|{ a^*-\tilde{a}_i}_{L^\infty_\beta(\{ x \in {\XEH}: \check{r}(x) \leq l\}) }
   \cdot
   \int_0^l
   r^{\beta-4+3} \d r
   \\
   &\leq
   c 
   \|{ a^*-\tilde{a}_i}_{L^\infty_\beta(\{ x \in {\XEH}: \check{r}(x) \leq l\}) }
   \rightarrow
   0  
  \end{align*}
  as $i \rightarrow \infty$ by definition of $a^*$.
  Last,
  \begin{align*}
   | III |
   &=
   t^{-2-\beta}
   | (\pi_t a_i)(y) |
   =
   t^{-2-\beta}
   | (\pi_t \iota_t \pi_t a_i)(y) |   
   \leq
   c
   \|{ \overline{\pi}_t a_i }_{L^\infty_{\beta;t}}
   \rightarrow 0,
  \end{align*}
  where we used \cref{proposition:pi-operator-norm-estimate} for the estimate.
  
  Altogether we see that, by taking $\lim _{i \rightarrow \infty}$ in \cref{equation:limit-orthogonal}, we have that
  $\< a^*, \overline{\nu} \> \leq cl^\beta$, where the constant $c$ was independent of $l$.
  This is true for any $l > 0$, therefore $\< a^*, \overline{\nu} \> =0$.
  But this is a contradiction to \cref{equation:a*-nonzero}.
  
 \textbf{Case 2:}
 the sequence $x_i$ concentrates on the regular part, i.e. $r_{t_i}(x_i) \rightarrow R > 0$ (see \cref{figure:blow-up-2}).
 
 \begin{figure}[htbp]
  \begin{center}
   \includegraphics[width=10cm]{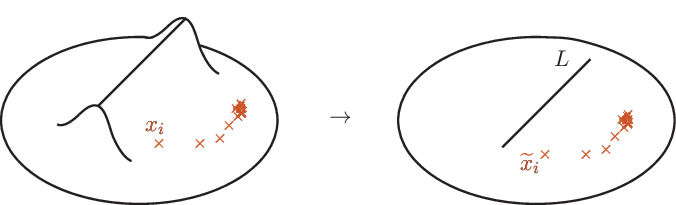}
  \end{center}
  \caption{Blowup analysis away from the associative is reduced to the analysis of the Laplacian on $T^7/\Gamma$. The figure is taken from \cite{Platt2024}.}
  \label{figure:blow-up-2}
 \end{figure}
 
 Using the Arzelà-Ascoli theorem and a diagonal argument, we can extract a limit $a^* \in \Omega^2(T^7 / \Gamma \setminus L)$.
 Denote, furthermore, $\lim _{i \rightarrow \infty} x_i = x^*$.
 We have $\left| a^* \right| < c \cdot d(\cdot , L)^{\beta}$, so we have that $a^*$ is a well-defined distribution on $M/\< \iota \>$ acting on $L^2$-sections because $\beta > -2$.
 We also have $\Delta a^*=0$, so $a^*$ is smooth by elliptic regularity, e.g. \cite[Theorem 6.33]{Folland1995}.
 
 Furthermore,
 \begin{align}
 \label{equation:orthogonality-on-regular-part-concentration}
  \<
   a^*,
   (1-\chi(2d(\cdot, L))) \cdot \alpha_i
  \>_{T^7/\Gamma}
  &=
  \lim_{i \rightarrow \infty}
  \<
   a_i,
   (1-\chi_t(r_t)) \cdot \pi^* \alpha_i
  \>_{N_{t_i}}
  =0.
 \end{align}
 By the unique continuation property for elliptic PDEs, the inner product
 \begin{align*}\< \; \cdot \;, (1-\chi) \circ (2d (\cdot, L)) \; \cdot \; \>\end{align*} is non-degenerate on harmonic forms.
 The $2$-form $a^*$ is a harmonic form that is orthogonal to all harmonic forms with respect to this inner product, therefore $a^*=0$.
 But this contradicts $a^*(x^*)>c$.
 
 \textbf{Case 3:}
 the sequence $x_i$ concentrates on the neck region, i.e. $\check{r}(x_i) \rightarrow \infty$, but $r_t(x_i) \rightarrow 0$ (see \cref{figure:blow-up-3}).
 
 \begin{figure}[htbp]
  \begin{center}
   \includegraphics[width=10cm]{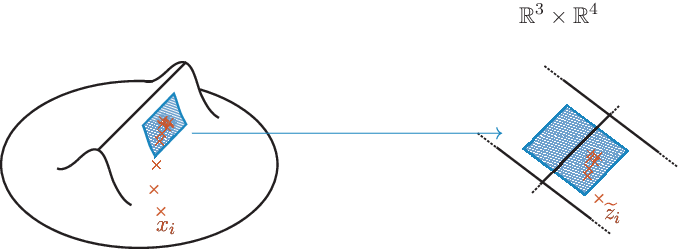}
  \end{center}
  \caption{Blowup analysis in the neck region is reduced to the analysis of the Laplacian on $\R^3 \times \R^4$. The figure is taken from \cite{Platt2024}.}
  \label{figure:blow-up-3}
 \end{figure}
 
 Define $\tilde{a}_i \in \Omega^2(\R^3 \times {\XEH})$ and $\tilde{x}_i \in \R^3 \times {\XEH}$ as in case 1.
 In this case, we have that $\left| \rho (\tilde{x}_i) \right| \rightarrow \infty$.
 In order to be able to obtain a limit of this sequence, let $R_i \rightarrow \infty$ be a sequence such that $R_i/\left| \rho (\tilde{x}_i) \right| \rightarrow 0$.
 Cutting out the exceptional locus of the Eguchi-Hanson space, we can consider $\{ (x_h,x_v) \in \R^3 \times {\XEH} : R_i \leq \left| \rho \right|(x_v) \leq \zeta t_i^{-1} \}$ as a subset of $\R^3 \times \C^2/\{\pm 1\}$.
 On $\R^3 \times \C^2/\{\pm 1\}$, we have the rescaling map $(\cdot \left| \rho (\tilde{x}_i) \right|)$.
 
 We now define:
 \begin{align}
 \begin{split}
  \tilde{\tilde{a}}_i
  &:=
  (\cdot \left| \rho (\tilde{x}_i) \right|)^*
  \left(
   \tilde{a}_i |_{\{ R_i \leq \left| \rho \right| \leq \zeta t_i^{-1} \}}
  \right)
  \cdot
  \left| \rho (\tilde{x}_i) \right|^{-2-\beta}
  \\
  &
  \;\;\;\;\;\; \in
  \Omega^2(\R^3 \times \{ x \in {\XEH}: R_i/\left| \rho (\tilde{x}_i) \right| \leq \left| \rho(x) \right| \leq \zeta t_i^{-1}/\left| \rho (\tilde{x}_i) \right| \}),
  \\
  \tilde{\tilde{x}}_i
  &:=
  \tilde{x}_i/\left| \rho (\tilde{x}_i) \right|.
 \end{split}
 \end{align}
 This sequence satisfies
 \begin{align}
 \label{equation:neck-limit-properties}
  \|{
   \tilde{\tilde{a}}_i
  }_{C^{2,\alpha}_{\beta}}
  \leq
  c
  \text{ and }
  \left|
   \tilde{\tilde{a}}_i(\tilde{\tilde{x}}_i)
  \right|
  >
  c.
 \end{align}
 The data $\tilde{\tilde{a}}_i$ and $\tilde{\tilde{x}}_i$ are defined on (subsets of) $\R^3 \times \C^2/\{ \pm 1 \}$.
 We use the same symbols to denote their pullbacks under the quotient map $\C^2 \rightarrow \C^2/\{ \pm 1 \}$.
 
 As before, we extract a $C^{2,\alpha/2}_{loc}$-limit $a^* \in \Omega^2(\R^3 \times \R^4 \setminus \{0 \})$ satisfying
 \begin{align*}
  \Delta_{\R^7} a^* =0,
  \text{ and }
  \|{
   a^*
  }_{L^\infty_{\beta}(\R^3 \times \R^4)}
  \leq c.
 \end{align*}
 We see as in case 2 that $a^*$ defines a distribution on all of $\R^7$, and is smooth by elliptic regularity on all of $\R^7$.
 
 We also get an $L^\infty$-bound for $a^*$ as follows:
 away from $\R^3 \times \{0\}$, this is given by \cref{equation:neck-limit-properties}.
 To see that $a^*$ does not blow up in the $\R^3$-direction near $\R^3 \times \{0\}$, consider any $y \in \R^3 \times \{0\}$.
 Let $1 < p < -4/\beta$, then $\|{a^*}_{L^p(B_1(y))} \leq c$, independent of $y$, by \cref{equation:neck-limit-properties}.
 So, by elliptic regularity $\|{a^*}_{L_m^p(B_1(y))} \leq c$ for any $m \in \mathbb{N}$, and by the Sobolev embedding we have $\|{a^*}_{L^\infty} \leq c$, where all of these estimates were independent of $y$.
 
 By \cref{lemma:laplacian-independent-of-r-3-direction} (applied to $\R^3 \times \R^4$), $a^*$ is constant in the $\R^3$ direction.
 The limit $a^*$ is therefore the pullback of a harmonic, bounded form of $\R^4$, and must thus vanish, which is a contradiction to the second part of \cref{equation:neck-limit-properties}.
\end{proof}

\subsection{Cross-term estimates}
\label{subsection:cross-term-estimates}
We have now established uniform estimates for the inverse of $\Delta$ on $\Im \pi_t$ and $\Im \rho_t$.
As it stands, it could happen that the operator norm of $\rho_t \Delta \overline{\pi}_t$ or $\pi_t \Delta \rho_t$ is very big.
It will turn out in our proof of \cref{proposition:x-y-laplace-estimate-N_t} that in such a case one would be unable to deduce anything about the inverse of the operator norm of $\Delta$ with respect to $\|{ \cdot}_{\mathfrak{X}_t}$ and $\|{ \cdot}_{\mathfrak{Y}_t}$.
Fortunately, it turns out that the operator norms of $\rho_t \Delta \iota_t$ (and therefore $\rho_t \Delta \overline{\pi}_t$, because $\overline{\pi}_t=\iota_t \pi_t$) and $\pi_t \Delta \rho_t$ are small, which is the content of the following proposition.

\begin{proposition}
\label{proposition:laplace-cross-term-estimates}
There exists $c>0$ independent of $t$ such that for all $g \in \Omega^0(L)$ and for all $a \in \Omega^2(N_t)$ we have
\begin{align}
\label{equation:rho-cross-term-estimate}
\|{\rho_t \Delta \iota_t g}_{C^{0,\alpha}_{\beta;t}}
&\leq
c
t^{2-\alpha}
\|{g}_{C^{2,\alpha}}
\text{ if }
\beta<0,
\\
\label{equation:improved-cross-term-estimate}
\|{\pi_t \Delta \rho_t a}_{C^{0,\alpha}}
&
\leq
c
t^{2+2\beta-2\alpha}
\|{\rho_t a}_{C^{2,\alpha}_{\beta;t}}
\text{ if }
-2<\beta<0.
\end{align}
\end{proposition}

\begin{proof}
We first prove \cref{equation:rho-cross-term-estimate}.
We have $\rho_t \iota_t=0$ and therefore
\begin{align*}
\|{\rho_t \Delta \iota_t g}_{C^{0,\alpha}_{\beta;t}}
&=
\|{\rho_t(\Delta \iota_t g-\iota_t \Delta g)}_{C^{0,\alpha}_{\beta;t}}
\\
&\leq
\|{\Delta \iota_t g-\iota_t \Delta g}_{C^{0,\alpha}_{\beta;t}}
+
\|{\iota_t \pi_t(\Delta \iota_t g-\iota_t \Delta g)}_{C^{0,\alpha}_{\beta;t}}
\\
&\leq
\|{\Delta \iota_t g-\iota_t \Delta g}_{C^{0,\alpha}_{\beta;t}}
+
ct^{-2-\beta}
\|{\pi_t(\Delta \iota_t g-\iota_t \Delta g)}_{C^{0,\alpha}}
\\
&\leq
\|{\Delta \iota_t g-\iota_t \Delta g}_{C^{0,\alpha}_{\beta;t}}
+
ct^{-\alpha}
\|{\Delta \iota_t g-\iota_t \Delta g}_{C^{0,\alpha}_{\beta;t}}
\\
&\leq
ct^{2-\alpha}
\|{g}_{C^{2,\alpha}},
\end{align*}
where we used \cref{proposition:iota-operator-norm-estimate} in the third step,
\cref{proposition:pi-operator-norm-estimate} in the fourth step,
and \cref{proposition:Delta-iota-commute} in the last step.

Now to prove \cref{equation:improved-cross-term-estimate}:
assume without loss of generality that $a=\rho_t a$.
Define
\begin{align*}
\tilde{\pi}_t : \Omega^2(T^3 \times {\XEH}) &\rightarrow \Omega^0(L)
\\
(\tilde{\pi}_t a)(x)
&:=
\< a, \overline{\nu} \>_{t^2g_{\XEH}}.
\end{align*}
The difference between $\tilde{\pi}_t$ and $\pi_t$ is that they use $\overline{\nu}$ and $\chi_t\nu$ in their definition, respectively:
$\overline{\nu}$ is not cut off, $\chi_t\nu$ is, and both are rescaled to have unit norm.
It suffices to prove the claim for $a \in \Omega^2(N_t)$ which is supported near $L$.
We can view such $a$ as an element in $\Omega^2(T^3 \times {\XEH})$ and apply $\tilde{\pi}_t$ to it.
Also define $\tilde{\iota}_t:\Omega^0(L) \rightarrow \Omega^2(T^3 \times {\XEH})$ as $\tilde{\iota}_t(g)=p^*_{T^3} \cdot p^*_{\XEH} \overline{\nu}$.
Then $\tilde{\pi}_t\tilde{\iota}_t=\Id$ and we also define $\tilde{\rho}_t:=1-\tilde{\iota}_t \tilde{\pi}_t$.

We have $\tilde{\pi}_t \Delta=\Delta \tilde{\pi}_t$, thus $\tilde{\pi}_t a =0 \Rightarrow \tilde{\pi}_t \Delta a=0$, and therefore $\tilde{\pi}_t \Delta \tilde{\rho}_t=0$.
Hence
\begin{align*}
\pi_t \Delta \rho_t a
&=
\underbrace{
(\pi_t -\tilde{\pi}_t) \Delta \rho_t a
}_{=:I}
+
\underbrace{
\tilde{\pi}_t \Delta (\rho_t-(1-\iota_t \tilde{\pi}_t))a
}_{=:II}
+
\underbrace{
\tilde{\pi}_t \Delta ((1-\iota_t\tilde{\pi}_t)-\tilde{\rho}_t) a
}_{=:III}
.
\end{align*}

We first estimate I:
\begin{align*}
\< \Delta \rho_t a, \overline{\nu} -\chi_t\nu \>_{L^2,t^2g_{\XEH}}
&\leq
\underbrace{
c
t^{4+\beta}
\int_0^{t^{-1}\zeta/8}
\left(
\|{\Delta \rho_t a}_{C^{0,\alpha}_{\beta-2;t}} (1+r)^{-2+\beta}
\right)
r^3 \d r
}_{\leq 
ct^{2+\beta}\|{\rho_t a}_{C^{2,\alpha}_{\beta;t}} \text{ if }
-2 \leq \beta \leq 0
}
\\
&\quad
+
\underbrace{
ct^\beta
\int_{t^{-1}\zeta/8}^\infty
\|{\rho_t a}_{C^{2,\alpha}_{\beta;t}}
(1+r)^{-2+\beta-4} r^3 \d r
}_{\leq ct^2\|{\rho_t a}_{C^{2,\alpha}_{\beta;t}}}.
\end{align*}
Here we applied \cref{equation:mu-nu-difference} on the region $\{x \in {\XEH}:\check{r}(x) \leq \zeta t^{-1}/8\}$ 
and we used
\[
|\overline{\nu} -\chi_t\nu|_{t^2g_{\XEH}} 
\leq 
|\overline{\nu}|_{t^2g_{\XEH}}+|\chi_t\nu|_{t^2g_{\XEH}} 
\leq 
c(t+ \check{r} t)^{-4}t^2
\]
on the region $\{x \in {\XEH}: \check{r}(x) \geq \zeta t^{-1}/8\}$.
Thus
\begin{align*}
\|{
(\pi_t -\tilde{\pi}_t) \Delta \rho_t a
}_{L^\infty}
\leq
ct^{2+\beta}
\|{\rho_t a}_{C^{2,\alpha}_{\beta;t}}
\end{align*}
and the $C^{0,\alpha}$-estimate follows analogously.

For estimating II we need the estimate
\begin{align}
\label{equation:pi-tilde-operator-norm-estimate}
\|{\tilde{\pi}_t a}_{C^{k,\alpha}}
\leq
t^{2+\beta-\alpha-k}
\|{a}_{C^{k,\alpha}_{\beta;t}}.
\end{align}
which is proved like \cref{proposition:pi-operator-norm-estimate}.
Then
\begin{align*}
\|{
\tilde{\pi}_t \Delta (\rho_t-(1-\iota_t \tilde{\pi}_t))a
}_{C^{0,\alpha}}
&=
\|{
\tilde{\pi}_t \Delta (\iota_t\pi_t-\iota_t \tilde{\pi}_t)a
}_{C^{0,\alpha}}
\\
&\leq
ct^{-\alpha}
\|{
\Delta \iota_t (\pi_t-\tilde{\pi}_t)a
}_{C^{0,\alpha}_{-2;t}}
\\
&\leq
ct^{-\alpha}
\left(
\|{\iota_t \Delta (\pi_t - \tilde{\pi}_t)a}
_{C^{0,\alpha}_{-2;t}}
+
t^2
\|{
(\pi_t-\tilde{\pi}_t)a
}_{C^{2,\alpha}}
\right)
\\
&\leq
ct^{-\alpha}
(1+t^2)
\|{
(\pi_t-\tilde{\pi}_t)a
}_{C^{2,\alpha}}
\\
&\leq
ct^{-\alpha}
(1+t^2)
t^2
\|{
a
}_{C^{2,\alpha}_{\beta;t}}
\\
&\leq
ct^{2-\alpha}
\|{
\rho_t a
}_{C^{2,\alpha}_{\beta;t}}
\end{align*}
where in the first estimate we used \cref{equation:pi-tilde-operator-norm-estimate},
in the second estimate we used \cref{proposition:Delta-iota-commute},
in the third estimate we used the estimate for the operator norm of $\iota_t$ from \cref{proposition:iota-operator-norm-estimate},
and in the fourth estimate we did the same calculation as when estimating I and we again used $-2< \beta <0$.
In the last step we used the assumption that $a=\rho_t a$.

It remains to estimate III.
We find
\begin{align*}
\|{
\tilde{\pi}_t \Delta ((1-\iota_t\tilde{\pi}_t)-\tilde{\rho}_t) a
}_{C^{0,\alpha}}
&=
\|{
\tilde{\pi}_t \Delta (\iota_t-\tilde{\iota}_t)\tilde{\pi}_t a
}_{C^{0,\alpha}}
\\
&\leq
ct^{-\alpha+\beta}
\|{
\Delta (\iota_t-\tilde{\iota}_t)\tilde{\pi}_t a
}_{C^{0,\alpha}_{\beta-2;t}}
\\
&\leq
ct^{-\alpha+\beta}
\|{
(\iota_t-\tilde{\iota}_t)\Delta \tilde{\pi}_t a
}_{C^{0,\alpha}_{\beta-2;t}}
+
t^{2-\alpha+\beta}
\|{
\tilde{\pi}_t a
}_{C^{2,\alpha}},
\end{align*}
where we used \cref{equation:pi-tilde-operator-norm-estimate} in the second step, and $\tilde{\iota}_t \Delta=\Delta \tilde{\iota}_t$ together with \cref{proposition:Delta-iota-commute} in the third step.
Here we find for the first summand
\begin{align*}
ct^{-\alpha+\beta}
\|{
(\iota_t-\tilde{\iota}_t)\Delta \tilde{\pi}_t a
}_{C^{0,\alpha}_{\beta-2;t}}
&\leq
ct^{-\alpha+\beta}
\|{
\chi_t\nu-\overline{\nu}
}_{C^{0,\alpha}_{0;t}}
\cdot
\|{
p_L^* \Delta \tilde{\pi}_t a
}_{C^{0,\alpha}_{0;t}}
\cdot
\|{
1
}_{C^{0,\alpha}_{\beta-2;t}}
\\
&\leq
ct^{-\alpha+\beta}
\cdot
t^2
\cdot
\|{
\Delta \tilde{\pi}_t a
}_{C^{0,\alpha}}
\\
&\leq
ct^{2-\alpha+\beta}
\cdot
\|{
\tilde{\pi}_t a
}_{C^{2,\alpha}}
\\
&\leq
ct^{2-2\alpha+2\beta}
\cdot
\|{
a
}_{C^{2,\alpha}_{\beta;t}}
\end{align*}
where we used \cref{equation:mu-nu-difference,equation:nu-smallness-far-away-estimate} in the second step;
we used
$\|{
p_L^* \Delta \tilde{\pi}_t a
}_{C^{0,\alpha}_{0;t}}=
\|{
\Delta \tilde{\pi}_t a
}_{C^{0,\alpha}}$ which holds because $p_L^* \Delta \tilde{\pi}_t a$ is constant in the Eguchi-Hanson direction, so the derivative in the $C^{0,\alpha}_{0;t}$-norm is just a derivative in the $L$-direction;
in the last step we used \cref{equation:pi-tilde-operator-norm-estimate}.
For the second summand we have
\begin{align*}
t^{2-\alpha+\beta}
\|{
\tilde{\pi}_t a
}_{C^{2,\alpha}}
&\leq
t^{2-2\alpha+2\beta}
\|{
a
}_{C^{2,\alpha}_{\beta;t}}
\end{align*}
by \cref{equation:pi-tilde-operator-norm-estimate}, which proves the claim.
\end{proof}

\subsection{Proof of \cref{proposition:x-y-laplace-estimate-N_t}}
\label{subsection:proof-of-inectivity-estimate}

\begin{proof}[Proof of \cref{proposition:x-y-laplace-estimate-N_t}]
By definition, 
$\|{a}_{\mathfrak{X}_t}=
\|{\rho_t a}_{C^{2,\alpha}_{\beta;t}}
+
t^{-3/2}
\|{\pi_t a}_{C^{2,\alpha}}$.
We treat the first summand first:
\begin{align*}
\|{\rho_t a}_{C^{2,\alpha}_{\beta;t}}
&\leq
\|{\Delta \rho_t a}_{C^{0,\alpha}_{\beta-2;t}}
\\
&\leq
\left(
\|{ \overline{\pi}_t \Delta \rho_t a}_{C^{0,\alpha}_{\beta-2;t}}
+
\|{ \rho_t \Delta a}_{C^{0,\alpha}_{\beta-2;t}}
+
\|{ \rho_t \Delta \overline{\pi}_t a}_{C^{0,\alpha}_{\beta-2;t}}
\right),
\end{align*}
where we used \cref{proposition:rho-estimate-on-N_t} in the first step and in the second step used $1=\overline{\pi}_t+\rho_t$ twice.
Here, the first summand satisfies
\begin{align*}
\|{ \overline{\pi}_t \Delta \rho_t a}_{C^{0,\alpha}_{\beta-2;t}}
&
\leq
t^{-\beta}
\|{ \pi_t \Delta \rho_t a}_{C^{0,\alpha}}
\\
&
\leq
t^{\beta+2-2\alpha}
\|{ \rho_t a}_{C^{2,\alpha}_{\beta;t}},
\end{align*}
where we used \cref{proposition:iota-operator-norm-estimate} in the first step,
and \cref{equation:improved-cross-term-estimate} in the second step.
The resulting term can be absorbed into the left hand side of \cref{equation:x-y-laplace-estimate}.

For the third summand we get from \cref{equation:rho-cross-term-estimate} that
\begin{align*}
\|{ \rho_t \Delta \overline{\pi}_t a}_{C^{0,\alpha}_{\beta-2;t}}
&
\leq
ct^{2-\alpha}
\|{ \pi_t a}_{C^{2,\alpha}},
\end{align*}
which can be absorbed into the left hand side of \cref{equation:x-y-laplace-estimate} if $\alpha$ is sufficiently small.

Regarding the $\pi_t$-term, we find that
\begin{align*}
t^{-3/2}
\|{\pi_t a}_{C^{2,\alpha}}
&\leq
t^{-3/2}
\|{\pi_t \Delta \iota_t \pi_t a}_{C^{0,\alpha}}
\\
&\leq
t^{-3/2}
\left(
\|{\pi_t \Delta a}_{C^{0,\alpha}}
+
\|{\pi_t \Delta \rho_t a}_{C^{0,\alpha}}
\right),
\end{align*}
where we used \cref{proposition:laplacian-perturbed-regularity} in the first step and $1=\overline{\pi}_t+\rho_t$ in the second step.
Here we have for the last summand
\begin{align}
\label{equation:alpha-beta-smallness-condition}
t^{-3/2}
\|{\pi_t \Delta \rho_t a}_{C^{0,\alpha}}
&\leq
t^{-3/2}
t^{2+2\beta-2\alpha}
\|{\rho_t a}_{C^{2,\alpha}_{\beta;t}}
\end{align}
which can be absorbed into the left hand side of \cref{equation:x-y-laplace-estimate}.
The remaining terms, i.e. the ones that have not been absorbed into the left hand side of \cref{equation:x-y-laplace-estimate}, exactly sum up to $\|{\Delta a}_{\mathfrak{Y}_t}$, which proves the claim.
\end{proof}

\cleardoublepage

\addcontentsline{toc}{section}{References}

\bibliographystyle{alpha}
\bibliography{library}

\end{document}